\theoremstyle{plain}
\newtheorem{theorem}{Theorem}[section]
\newtheorem{lemma}[theorem]{Lemma}
\newtheorem{cor}[theorem]{Corollary}
\newtheorem{prop}[theorem]{Proposition}
\newtheorem{thmx}{Theorem}
\theoremstyle{definition}
\newtheorem{defn}[theorem]{Definition}
\newtheorem{example}[theorem]{Example}
\theoremstyle{remark}
\newtheorem{remark}[theorem]{Remark}
\numberwithin{equation}{section}
\newcommand{\bbf}{{\mathbf f }}
\newcommand{\be}{{\mathbf e }}
\newcommand{\ft}{{\mathfrak t }}
\newcommand{\fs}{{\mathfrak s}}
\newcommand{\cI}{{\mathcal I}}
\newcommand{\calI}{{\mathcal I}}
\newcommand{\calH}{{\mathcal H}}
\newcommand{\calNH}{{\mathcal{NH}}}
\newcommand{\calE}{{\mathcal E}}
\newcommand{\calX}{{\mathcal X}}
\newcommand{\calR}{{\mathcal R}}
\newcommand{\cS}{{\mathcal S}}
\newcommand{\calS}{{\mathcal S}}
\newcommand{\calL}{{\mathcal L}}
\newcommand{\bv}{{\mathbf v}}
\newcommand{\bw}{{\mathbf w}}
\newcommand{\bu}{{\mathbf u}}
\newcommand{\into}{\hookrightarrow}
\newcommand{\sfS }{{\mathsf S}}
\newcommand{\sfT}{{\mathsf T}}
\newcommand{\sfG}{{\mathsf G}}
\newcommand{\sfR}{{\mathsf R}}
\newcommand{\fraks}{{\mathfrak s}}
\newcommand{\frakr}{{\mathfrak r}}
\newcommand{\R}{{\mathbb R}}
\newcommand{\Q}{{\mathbb Q}}
\newcommand{\C}{{\mathbb C}}
\newcommand{\Z}{{\mathbb Z}}
\newcommand{\bbP}{{\mathbb P}}
\newcommand{\lan}{{\langle}}
\newcommand{\ran}{{\rangle}}
\newcommand{\inc}{\hookrightarrow}
\newcommand{\surj}{\twoheadrightarrow}
\newcommand{\Mat}{\operatorname{Mat}}
\newcommand{\SR}{\operatorname{SR}}
\newcommand{\Crit}{\operatorname{Crit}}
\newcommand{\Sym}{\operatorname{Sym}}
\newcommand{\Hom}{\operatorname{Hom}}
\newcommand{\Lie}{\operatorname{Lie}}
\newcommand{\im}{{\operatorname{Im\ }}}
\newcommand{\Ann}{{\operatorname{Ann\ }}}
\newcommand{\U}{{\mbox{U}}}
\newsavebox{\savepar}
\newcounter{labelflag} \setcounter{labelflag}{0}
\newcommand{\labelon}{\setcounter{labelflag}{1}}
\newcommand{\Label}[1]{\ifnum\thelabelflag=1\ifmmode
\makebox[0in][l]{\qquad\fbox{\rm#1}} \else
\marginpar{\vspace{0.7\baselineskip} \hspace{-1.1\textwidth}
\fbox{\rm#1}} \fi \fi \label{#1} } \labelon
\begin{document}

\title[Equivariant cohomology of orbifolds]
{Equivariant cohomology for Hamiltonian\\ torus actions on symplectic orbifolds}

\author{Tara Holm}
\address{Department of Mathematics\\
Cornell University\\
Ithaca, NY 14853, USA}
\email{tsh@math.cornell.edu}

\author{Tomoo Matsumura}
\address{Department of Mathematical Sciences\\
ASARC, KAIST\\
Daejeon, South Korea}
\email{tomoomatsumura@kaist.ac.kr}
\thanks{The first author gratefully acknowledges the support of the NSF through Grant DMS-0835507.  The second author is supported by the National Research
Foundation of Korea (NRF) through Grants No. 2012-0000795 and 2011-0001181 funded by the Korea government (MEST)}

\maketitle

\begin{abstract}
We study Hamiltonian $\sfR$-actions on symplectic orbifolds $[M/\sfS ]$, where $\sfR$ and $\sfS $ are tori. We prove an injectivity theorem and generalize Tolman and Weitsman's proof of the GKM theorem \cite{TW} in this setting. The main example is the symplectic reduction $X/\!/\sfS $ of a Hamiltonian $\sfT$-manifold $X$ by a subtorus $\sfS \subset \sfT$.  This includes the class of {\bf symplectic toric orbifolds}. We define the equivariant Chen-Ruan cohomology ring and use the above results to establish a combinatorial method of computing this equivariant Chen-Ruan cohomology in terms of {\bf{orbifold fixed point data}}.
\end{abstract}

\section{Introduction}
There has been a flurry of recent work computing a variety of algebraic invariants for orbifolds.
In the present paper, we consider {\bf equivariant} invariants of an orbifold equipped with a
group action.  Our orbifolds arise as global quotients $[M/\sfS]$ of a manifold by a torus 
acting with finite stabilizers, and our actions of a torus $\sfR$ on $[M/\sfS]$ arise as extensions of the action of 
$\sfS$ on $M$. We will discuss ordinary and stringy equivariant invariants, and relate our results to the current 
literature.  We include several explicit examples coming from the symplectic reduction 
construction in symplectic geometry.

Let $\sfT\cong S^1\times \cdots \times S^1$ be a compact torus and $\sfS  \subset \sfT$ a subtorus, with the quotient torus $\sfR:=\sfT/\sfS $. Let $M$ be a $\sfT$-manifold such that $[M/\sfS ]$ is a compact $\sfR$-Hamiltonian orbifold in the sense of \cite{LT}.  We note that topological invariants of the orbifold $[M/\sfS ]$ should be $\sfS$-equivariant invariants of $M$.  Hence the $\sfR$-equivariant cohomology of the orbifold $[M/\sfS ]$ is defined to be
\[
H_{\sfR}^*([M/\sfS ],\Z):=H^*_{\sfT}(M,\Z).
\]
Note that on the level of topological spaces, $H^*_{\sfT}(M,\Q)=H^*_{\sfR}(M/\sfS ,\Q)$ but over $\Z$ they are not equal in general. See \cite{H} for this kind of comparison.

The main application comes from symplectic reduction. Let $(X, \omega)$ be a connected, Hamiltonian $\sfT$-manifold with moment map $\mu_{\sfT}: X\to \ft^*$, where $\ft$ denotes the Lie algebra of $\sfT$.  We assume that $\mu_{\sfT}$ has a component that is proper and bounded below. For a subtorus $\sfS \subset \sfT$, containing the proper component, we have the natural inclusion $\fs\into\ft$ of Lie algebras, and we let $\pi_S: \ft^*\to\fs^*$ denote the dual projection.  Then $X$ is also a Hamiltonian $\sfS $-manifold with the induced moment map $\mu_{\sfS}=\pi_S\circ\mu_{\sfT}: X\to\fraks^*$.   For a regular value $a \in \fraks^*$, we let 
$M:=\mu_{\sfS}^{-1}(a)$ be the level set of the $\sfS$ moment map.  The {\bf symplectic reduction}
$X/\!/\sfS :=[M/\sfS ]$ of $X$ by the action of $\sfS $ at $a$ is a compact {\bf symplectic orbifold} which is Hamiltonian with respect to the residual torus action of $\sfR:=\sfT/\sfS$, in the sense of \cite{LT}.  

Classically,  if $Y$ is a compact Hamiltonian $\sfT$-manifold satisfying the GKM conditions, the GKM theorem \cite{GKM} computes the $\sfT$-equivariant cohomology of $Y$ in terms of the fixed point data. Our first main result is to generalize the GKM theorem to compute $H_{\sfR}^*([M/\sfS ],\Z)$ by adopting the proof in \cite{TW} to our setting as follows.  Please note that the complete technical hypotheses of the theorems appear in the main body of this paper as noted.

\begin{thmx}[Theorems \ref{thm:injectivity} and \ref{orbifold GKM} below]
Let $[M/\sfS ]^{\sfR}$ be the suborbifold consisting the $0$-dimensional $\sfR$-orbits in $[M/\sfS ]$, and let $[M_1/\sfS ]$ the {\bf orbifold 1-skeleton}, the suborbifold consisting of the $0$- and $1$-dimensional $\sfR$-orbits in $[M/\sfS ]$. We have the following diagram of $\sfR$-equivariant natural inclusions
\[
\begin{array}{c}
\xymatrix{
[M/\sfS ]  & [M_1/\sfS ]\ar[l] \\
& [M/\sfS ]^{\sfR} \ar[u]_j \ar[lu]^i
}
\end{array}.
\]
When we take $R$-equivariant cohomology, the image of the injection
\[
i^*: H^*_{\sfR}([M/\sfS ],\Q) \to H^*_{\sfR}([M/\sfS ]^{\sfR},\Q)
\] 
is the same as the image of 
\[
j^*: H^*_{\sfR}([M_1/\sfS ],\Q)\to H^*_{\sfR}([M/\sfS ]^{\sfR},\Q).
\]
The map $i^*$ is injective in cohomology with integer coefficients, and
the images of $i^*$ and $j^*$ coincide in cohomology with integer coefficients,
when the stabilizer subgroups are connected and the isotropy weights are primitive.
\end{thmx}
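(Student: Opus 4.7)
The plan is to adapt the Morse-theoretic proof of the GKM theorem in \cite{TW} to the orbifold setting, exploiting the definitional identification $H^*_{\sfR}([M/\sfS],\Z) = H^*_{\sfT}(M,\Z)$ to reduce the statement to one about $\sfT$-equivariant cohomology of the manifold $M$. Pick a generic element $\xi$ in the Lie algebra $\ft/\fs$ of $\sfR$ and lift to $\tilde{\xi}\in \ft$; then $f:=\langle \mu_{\sfT},\tilde{\xi}\rangle : M\to\R$ is $\sfT$-invariant, and so it descends to an orbifold Morse--Bott function on $[M/\sfS]$ whose critical set is exactly $[M/\sfS]^{\sfR}$. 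The $\sfT$-invariant Morse stratification (computed via a $\sfT$-invariant metric) gives a $\sfT$-equivariant filtration of $M$ whose associated graded pieces are $\sfT$-equivariantly the disk bundles of the negative normal bundles of the critical components.

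Injectivity of $i^*$ over $\Q$ then follows by the Atiyah--Bott--Kirwan argument applied to this filtration: at each critical component $C$, the negative normal bundle $\nu^{-}_{C}$ is a $\sfT$-equivariant orbifold bundle all of whose $\sfR$-weights are nonzero by genericity of $\xi$, so its $\sfR$-equivariant Euler class $e_{\sfR}(\nu^{-}_{C})\in H^*_{\sfR}(C,\Q)$ is a non-zero divisor, the Thom isomorphism for $\nu^{-}_{C}$ holds rationally, and the Morse long exact sequence splits. The statement that the image of $i^*$ equals the image of $j^*$ follows the refinement in \cite{TW}: the compatibility conditions imposed by the orbifold $1$-skeleton are precisely the obstructions arising at Morse critical components of index $2$; at higher index the Euler factor remains a non-zero divisor in a deeper filtration step, so the corresponding obstruction class is killed inductively.

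Passing to $\Z$-coefficients requires two integrality inputs, which is exactly where the two extra hypotheses enter. First, the orbifold Thom isomorphism must hold integrally, i.e.\ an integral $\sfT$-equivariant Thom class must exist on each $\nu^{-}_{C}$; this is the case when all isotropy groups are connected, for then each isotropy subtorus acts trivially on the orientation of the fiber. Second, $e_{\sfR}(\nu^{-}_{C})$ must remain a non-zero divisor over $\Z$: decomposing $\nu^{-}_{C}=\bigoplus_k L_k$ into $\sfT$-equivariant line bundles with isotropy weights $\alpha_k$, primitivity of $\alpha_k$ implies each $c_1^{\sfT}(L_k)\in H^2_{\sfT}(C,\Z)$ is primitive and hence a non-zero divisor, and so is their product. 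With these two inputs in place the rational Morse argument runs verbatim over $\Z$. The principal obstacle is precisely this integral analysis of orbifold Thom and Euler classes: without connected stabilizers or primitive weights, denominators appear in the Thom class or zero divisors appear in the Euler class, and the integer-coefficient image characterization can fail.
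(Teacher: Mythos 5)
Your reduction to $\sfT$-equivariant cohomology of $M$, the choice of a generic $\xi$ making $\mu^{\xi}$ Morse--Bott with critical set $F$, and the Atiyah--Bott non-zero-divisor argument for rational injectivity all match the paper's proof of Theorem~\ref{thm:injectivity}. The genuine gap is in the second assertion, $\im i^*=\im j^*$. Your one-sentence sketch (``the compatibility conditions imposed by the orbifold $1$-skeleton are precisely the obstructions arising at Morse critical components of index $2$; at higher index the Euler factor remains a non-zero divisor in a deeper filtration step'') is not an argument and does not describe the mechanism that actually makes this work. What is needed, and what the paper supplies in Proposition~\ref{5.2} and Theorem~\ref{orbifold GKM}, is: (i) the observation that the closure $N$ of each connected component of $M_1^{\circ}$ through a critical component $F_c$ is itself a Hamiltonian $\sfR$-orbifold to which the injectivity theorem applies, so that a class $\eta$ on $M_1$ vanishing below level $c$ restricts on $F_c$ to a multiple of $\be_{\sfT}(E^-_{N,c})$ for \emph{each} such $N$; (ii) the divisibility lemma (Lemma~\ref{lemma5.5}, generalizing Lemma~3.2 of \cite{TW}) upgrading ``multiple of each weight-subbundle Euler class'' to ``multiple of the full negative Euler class $\be_{\sfT}(E^-_c)$''; and (iii) a Five-Lemma induction over critical values showing $H^*_{\sfT}(M_b^-)\twoheadrightarrow \im j_b^{-*}$. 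Without step (ii) in particular, the claim does not follow, and this is exactly where the extra integral hypothesis enters.

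Your integral analysis also misplaces the role of the two hypotheses. The negative normal bundle is an honest $\sfT$-equivariant \emph{complex} vector bundle over the manifold $F$ (one works upstairs on $M$, not with orbifold bundles on $[F/\sfS]$), so the equivariant Thom isomorphism holds over $\Z$ unconditionally; connectedness of the stabilizers is not needed there. Integral injectivity requires only primitivity of the weights, which forces the Euler class to have unit leading coefficient in $H^*(F;\Z)\otimes H^*(B\sfT_{F'};\Z)$ and hence to be a non-zero divisor --- note that ``primitive in $H^2_{\sfT}$ hence a non-zero divisor'' is not valid as stated when $H^*(F;\Z)$ has torsion (cf.\ Example~\ref{ex:torsion}). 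Connectedness of $\sfT_x$ is needed only for the $\im i^*=\im j^*$ statement over $\Z$, and its actual role is to guarantee $H^*_{\sfT}(F;\Z)\cong H^*_{\sfT_{x,1}}(pt)\otimes H^*(F/\sfS;\Z)$, so that the distinct primitive weights are pairwise coprime and the unique-factorization argument in the divisibility lemma survives with integer coefficients.
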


Applying this theorem to compact symplectic toric orbifolds, we obtain the following.

\begin{thmx}[Theorem \ref{stanley reisner for orbifold} below]
Let $\sfS $ be an $(m-n)$-dimensional subtorus of the $m$-dimensional torus $\sfT$ which acts on $\C^m$ canonically coordinate-wise. Let $\Delta$ be the moment polytope of the compact toric orbifold obtained as the symplectic reduction $\C^m/\!/ \sfS $ at a regular value. Then
\[
H^*_{\sfR}\left(\C^m/\!/ \sfS ,\Z\right) \cong \SR(\Delta)
\]
where $\SR(\Delta)$ is the Stanley-Reisner ring of the polytope $\Delta$. Note that this Stanley-Reisner description depends only on the combinatorial type of $\Delta$.
\end{thmx}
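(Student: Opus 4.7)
The plan is to apply Theorem A to the toric orbifold $Y := \C^m/\!/\sfS$ and identify the resulting combinatorial description of $H^*_\sfR(Y, \Z)$ with $\SR(\Delta)$. I first verify the hypotheses of Theorem A: the $\sfT$-action on $\C^m$ is linear and coordinate-wise, so every $\sfT$-stabilizer is a coordinate subtorus (hence connected) and the isotropy weights $e_1, \dots, e_m \in \ft^*$ are primitive; both properties persist on $M := \mu_\sfS^{-1}(a)$. The $\sfR$-fixed points of $Y$ are indexed by the vertices of $\Delta$: writing $\beta_j := \pi_S(e_j) \in \fs^*$, each vertex $v$ is the intersection of $n$ facets, labeled by a set $I_v \subset \{1, \dots, m\}$ of size $n$ with $\{\beta_j : j \notin I_v\}$ a $\Q$-basis of $\fs^*$, and the corresponding fixed orbifold point $p_v$ is the $\sfS$-orbit of any $z \in M$ with $z_i = 0$ exactly when $i \in I_v$. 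Similarly, $[M_1/\sfS]$ is a union of orbifold $2$-spheres, one for each edge of $\Delta$, joining pairs of vertices whose index sets differ in a single element.

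Next, the $\sfT$-equivariant inclusion $M \hookrightarrow \C^m$ induces a ring map
$$\phi : H^*_\sfT(\C^m, \Z) = \Z[x_1, \dots, x_m] \longrightarrow H^*_\sfT(M, \Z) = H^*_\sfR(Y, \Z),$$
with $x_i$ corresponding to the weight $e_i$. In the GKM description provided by Theorem A, the restriction $\phi(x_i)|_{p_v}$ is the appropriate $\sfR$-weight when $i \in I_v$ and vanishes when $i \notin I_v$ (since $p_v$ lies off the locus $\{z_i = 0\}$). Thus for any $S \subset \{1, \dots, m\}$ with $\bigcap_{i \in S} F_i = \emptyset$, no vertex index set $I_v$ contains $S$, so $\phi(\prod_{i \in S} x_i)$ restricts to zero at every fixed point; by the injectivity of $i^*$ in Theorem A, $\phi$ kills the Stanley-Reisner ideal $I_\Delta$, yielding an induced ring map $\bar\phi : \SR(\Delta) \to H^*_\sfR(Y, \Z)$.

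To finish, I show $\bar\phi$ is an isomorphism. Surjectivity follows from equivariant Kirwan surjectivity applied to the linear $\sfT$-action on $\C^m$ reduced at $\sfS$. For injectivity, I embed both sides in $\bigoplus_v H^*_\sfR(p_v, \Z)$ via the $\sfR$-equivariant restriction and check that the images agree: Theorem A describes the image of $H^*_\sfR(Y, \Z)$ as the subring cut out by edge compatibility along the $1$-skeleton, while a direct combinatorial argument on the simple polytope $\Delta$ (using that the $n$ edges at each vertex $v$ correspond bijectively to the indices in $I_v$, with weights that restrict to the appropriate $\sfR$-images of the $e_i$) identifies the image of $\SR(\Delta)$ with the same subring. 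The main obstacle is the integer case: one must verify that the connected-stabilizer and primitive-weight hypotheses of Theorem A suppress any torsion from the orbifold stabilizer groups $G_v = \sfS \cap \sfT_{z_v}$, so that the integer Stanley-Reisner ring matches $H^*_\sfR(Y, \Z)$ over $\Z$ without extra correction.
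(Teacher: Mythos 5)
Your overall strategy is the same as the paper's: verify that the coordinate-wise $\sfT$-action gives connected stabilizers $\sfT_x$ (coordinate subtori) and primitive isotropy weights (the coordinate characters $\lambda_{i_k}(t)=t_{i_k}$), so that Theorem A applies over $\Z$; then match the image of $H^*_{\sfR}(Y,\Z)$ inside $\bigoplus_v H^*_{\sfT}(F_v,\Z)$ with the piecewise-polynomial model $R_\Delta\cong\SR(\Delta)$. However, there is a genuine gap at the crux of the argument. You write that ``Theorem A describes the image of $H^*_{\sfR}(Y,\Z)$ as the subring cut out by edge compatibility along the $1$-skeleton,'' but Theorem A only says that $\im i^* = \im j^*$ where $j^*$ restricts from the $1$-skeleton; it does not compute $\im j^*$. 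Turning $\im j^*$ into the concrete edge relations $p_v|_{x_a=0}=p_u|_{x_b=0}$ over $\Z$ requires computing $H^*_{\sfT}(N_{v,u};\Z)$ for each edge component $N_{v,u}$ together with its restriction to the two fixed components. The paper does this by exhibiting a $\sfT$-equivariant homotopy equivalence of $N_{v,u}$ with $\{0\}^{n-1}\times S^3\times (S^1)^{m-n-1}$, applying the K\"unneth formula and the short exact sequence for $(\C^k,S^{2k-1})$ to get a surjection $\Z[x_1,\dots,x_m]=H^*_{\sfT}(\C^m)\twoheadrightarrow H^*_{\sfT}(N_{v,u})$, and then reading off the image of the restriction to $F_v\sqcup F_u$. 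This edge-by-edge integral computation is the substantive content of the proof and is absent from your proposal; your closing sentence (``one must verify\ldots'') concedes exactly this point without resolving it.

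A secondary issue: your surjectivity step invokes equivariant Kirwan surjectivity for $\C^m/\!/\sfS$ \emph{over} $\Z$. Over $\Q$ this is standard, but integral Kirwan surjectivity is not an off-the-shelf result (it is the subject of the in-preparation reference [HT] cited in this paper), so you should not lean on it. Fortunately it is also unnecessary: once you know (i) $i^*$ is injective over $\Z$, (ii) $\im i^*=\im j^*$, and (iii) $\im j^*$ equals the image of $R_\Delta\cong\SR(\Delta)$ in $\bigoplus_v H^*_{\sfT}(F_v,\Z)$, the isomorphism $H^*_{\sfR}(Y,\Z)\cong\SR(\Delta)$ follows with no appeal to the Kirwan map at all. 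I recommend dropping the Kirwan step and supplying the missing computation of $H^*_{\sfT}(N_{v,u};\Z)$ and its fixed-point restriction.
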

In the second part of this paper, we introduce the $\sfR$-equivariant version of the Chen-Ruan orbifold cohomology 
ring, denoted   $H_{orb,\sfR}([M/\sfS ])$, for a Hamiltonian $\sfR$-orbifold $[M/\sfS ]$. We use our orbifold GKM theorem 
to compute $H_{orb,\sfR}([M/\sfS ])$ in terms of fixed point data.  Our definitions coincide with those found in the literature;  
we discuss this further at the end of Section~\ref{sec:eqCR}.

As a vector space, the {\bf $\sfR$-equivariant Chen-Ruan orbifold cohomology} is defined to be 
\[
H_{orb,\sfR}([M/\sfS ]):= \bigoplus_{g \in \sfS } H_{\sfR}^*([M^g/\sfS ]).
\]
As a graded ring, we must add a shifted grading and define a twisted product given by the usual pull-cup-push formula (see, for example,  \cite{FG,GHK,JKK})
\[
\eta \odot \xi := e_* \left(e_1^*\eta \cup e_2^*\xi \cup c_{M}(g,h)\right) \ \ \ \ \mbox{ for } (\eta,\xi) \in H^*_{\sfR}([M^g/\sfS ])\times H^*_{\sfR}([M^h/\sfS ]),
\]
where $e_1,e_2,e$ are the obvious inclusions of $M^{g,h}:=M^g\cap M^h$ to $M^g$, $M^h$, and $M^{gh}$ respectively. Here we adapt the formula from \cite{BCS, EJK} to define  the virtual class $c_{M}(g,h)$.  Namely, for $(g,h) \in \sfS \times\sfS $, let $TM|_{M^{g,h}}= \bigoplus_{\lambda \in \Hom(H,S^1)} W_{\lambda}$ be the weight decomposition of the tangent bundle of $M$ restricted to $M^{g,h}$ where $H$ is the subgroup of $\sfS $ generated by $g$ and $h$. Then the {\bf obstruction bundle} on $[M^{g,h}/\sfS ]$ is the $\sfT$-equivariant vector bundle
\[
\calR_M(g,h)=\bigoplus_{a_{\lambda}(g)+a_{\lambda}(h)+a_{\lambda}((gh)^{-1})=2, \atop{\lambda\not=0}} W_{\lambda}
\]
and the corresponding virtual class $c_{M}(g,h)$ is defined as the $\sfT$-equivariant Euler class of $\calR_M(g,h)$
\[
c_{M}(g,h):=\be_{\sfT}\left(\calR_{M}(g,h)\right) \in H_{\sfT}(M^{g,h}).
\]
The associativity of this product follows immediately from the excess intersection formula and the projection formula, as in  \cite{GHK, JKK}:

\begin{thmx}[Theorem \ref{associativity}  below]
The ring $H_{orb,\sfR}([M/\sfS ])$ with the twisted product is associative.
\end{thmx}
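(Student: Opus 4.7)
The plan is to follow the standard pull-cup-push approach of \cite{JKK,GHK}, verified carefully in our Hamiltonian/orbifold setting. Fix $g,h,k \in \sfS$ and classes $\eta \in H_\sfR^*([M^g/\sfS])$, $\xi \in H_\sfR^*([M^h/\sfS])$, $\zeta \in H_\sfR^*([M^k/\sfS])$; the goal is to show $(\eta \odot \xi) \odot \zeta = \eta \odot (\xi \odot \zeta)$ as classes in $H_\sfR^*([M^{ghk}/\sfS])$. The first step is to recognize that both iterated products can be assembled as a single pushforward from the triple fixed locus $M^{g,h,k} := M^g \cap M^h \cap M^k$ down to $M^{ghk}$.

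To convert the iterated product $(\eta \odot \xi) \odot \zeta$ into such a pushforward, I would argue that the square
\[
\xymatrix{
M^{g,h,k} \ar[r]^{\alpha} \ar[d]_{\beta} & M^{g,h} \ar[d]^{e_{12}} \\
M^{gh,k} \ar[r]_{f_1} & M^{gh}
}
\]
is Cartesian (as a set-theoretic, and hence transverse, intersection of fixed loci inside $M^{gh}$). Applying the excess intersection formula to $f_1^*(e_{12})_*$ and then the projection formula to absorb $f_2^*\zeta$ and $c_M(gh,k)$ into a single pushforward from $M^{g,h,k}$, I obtain
\[
(\eta \odot \xi) \odot \zeta = \pi_*\bigl(e_g^*\eta \cup e_h^*\xi \cup e_k^*\zeta \cup c_M(g,h)|_{M^{g,h,k}} \cup \be_\sfT(E_1) \cup c_M(gh,k)|_{M^{g,h,k}}\bigr),
\]
where $\pi: M^{g,h,k}\to M^{ghk}$ is the inclusion, $e_g,e_h,e_k$ are the inclusions of $M^{g,h,k}$ into the respective double loci, and $E_1$ is the excess normal bundle associated to the above square. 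An entirely parallel argument using the Cartesian square built from $M^{h,k} \hookrightarrow M^{hk}$ expresses $\eta \odot (\xi \odot \zeta)$ as the same pushforward, but with the class $c_M(h,k)|_{M^{g,h,k}} \cup \be_\sfT(E_2) \cup c_M(g,hk)|_{M^{g,h,k}}$ replacing the above.

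The associativity then reduces to the single identity of $\sfT$-equivariant characteristic classes on $M^{g,h,k}$:
\[
c_M(g,h) \cdot \be_\sfT(E_1) \cdot c_M(gh,k) \;=\; c_M(h,k) \cdot \be_\sfT(E_2) \cdot c_M(g,hk).
\]
Here both sides are verified to equal $\be_\sfT(\calR_M(g,h,k))$, where
\[
\calR_M(g,h,k) := \bigoplus_{\substack{\lambda\not=0 \\ a_\lambda(g)+a_\lambda(h)+a_\lambda(k)-a_\lambda(ghk) \,=\, 2\rk W_\lambda}} W_\lambda
\]
is the ternary obstruction bundle, defined from the weight decomposition of $TM|_{M^{g,h,k}}$ under the finite subgroup $H = \langle g,h,k\rangle \subset \sfS$. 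This final step is a weight-by-weight check: the excess bundles $E_1,E_2$ account for weights fixed by the pair being contracted but not by the remaining generator, while the obstruction bundles account for weights whose ages satisfy the cocycle condition. Tracking ages shows that both ordered products consolidate to $\calR_M(g,h,k)$.

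The main obstacle is the bookkeeping in this last identity, as one must separately treat the weight spaces $W_\lambda$ according to which of $g,h,k$ (and their pairwise and triple products) act trivially on $W_\lambda$, and match the dimension counts between excess normal bundles and the defining age condition for the obstruction bundle. Once this characteristic class identity is established, $\sfT$-equivariance is automatic throughout because every bundle, inclusion, and pushforward used is $\sfT$-equivariant, so the argument lifts verbatim to $H_{orb,\sfR}$.
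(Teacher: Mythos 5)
Your overall strategy is exactly the paper's: the paper proves Theorem~\ref{associativity} by deferring to the non-equivariant excess-intersection arguments of \cite{BCS,GHK,JKK}, and it carries out precisely your pushforward-to-the-triple-locus computation in detail for the parallel statement, Theorem~\ref{inertial associativity}. Two points need repair before your writeup is correct. First, the fiber square of fixed loci is a \emph{clean} intersection, not a transverse one --- if it were transverse the excess bundle $E_1$ would vanish and there would be nothing to track; the excess intersection formula exists precisely to handle the failure of transversality, so delete ``and hence transverse'' and instead justify cleanness (the fixed loci of the compact abelian group $H=\langle g,h,k\rangle$ intersect cleanly, with $T(M^g\cap M^h)=TM^g\cap TM^h$ fiberwise). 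Second, your displayed formula for the ternary obstruction bundle is wrong as stated. In $K_{\sfT}(M^{g,h,k})\otimes\Q$ one has
\[
\calR_M(g,h,k)=\ominus N_{M^{g,h,k}\subset M}\oplus\calS_g\oplus\calS_h\oplus\calS_k\oplus\calS_{(ghk)^{-1}},
\]
so the multiplicity of a nonzero weight space $W_\lambda$ is $m_\lambda=a_\lambda(g)+a_\lambda(h)+a_\lambda(k)+a_\lambda((ghk)^{-1})-1$, which is an integer in $\{0,1,2\}$; weights can occur with multiplicity two, so $\calR_M(g,h,k)=\bigoplus_{\lambda\neq 0}m_\lambda W_\lambda$ is not a direct sum over weights satisfying a single equation, and the normalization ``$=2\rk W_\lambda$'' is not correct (the ages are bounded by $1$ per element and do not scale with $\rk W_\lambda$). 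With that multiplicity formula the weight-by-weight check does close up: both associations reduce to $\be_\sfT(\calR_M(g,h,k))$, which is manifestly symmetric in $(g,h,k)$, exactly as in the paper's proof of Theorem~\ref{inertial associativity}, where the analogous cancellation uses $\calS_g\oplus\calS_{g^{-1}}=TM\ominus TM^g$.
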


To apply our GKM theorem to the computation of the equivariant Chen-Ruan cohomology, we introduce a new ring $\left(\calNH_{\sfR}(\nu [M/\sfS ]^{\sfR}),\star\right)$.  As a vector space,
\[
\calNH_{\sfR}(\nu [M/\sfS ]^{\sfR}):=\bigoplus_{g \in \sfS } H_{\sfT}(\nu(F^g\subset M)),
\]
where $F$ is the $\sfS$-invariant submanifold such that $[M/\sfS ]^{\sfR}=[F/\sfS ]$, and $\nu(F^g\subset M)$ denotes the normal bundle to $F^g$ in $M$.
The product $\star$  is defined in Section~\ref{star product} using the isotropy data for the $T$-action on each $\nu(F^g\subset M)$. We then prove the following two results.

\begin{thmx}[Theorem \ref{inertial associativity} below]
The ring $(\calNH_{\sfR}(\nu[M/\sfS ]^{\sfR}),\star)$ is associative.
\end{thmx}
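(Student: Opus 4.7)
The plan is to establish associativity of $\star$ by mimicking the proof of associativity for the Chen-Ruan product $\odot$ on $H_{orb,\sfR}([M/\sfS])$ from the preceding theorem, but now restricted to the normal directions of the $\sfR$-fixed suborbifold $[F/\sfS]$. Heuristically, $\calNH_{\sfR}(\nu[M/\sfS]^{\sfR})$ should be thought of as a Thom-twisted, $F$-localized model of a piece of $H_{orb,\sfR}([M/\sfS])$ that sees only the isotropy data of the $\sfT$-action on the normal bundles $\nu(F^g\subset M)$. Consequently, associativity should follow from the same formal pull-cup-push manipulations as for $\odot$, combined with an identity of $\sfT$-equivariant Euler classes on triple intersections.

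First I would unpack the definition of $\star$ given in Section~\ref{star product}. For $\eta\in H_{\sfT}(\nu(F^g\subset M))$ and $\xi\in H_{\sfT}(\nu(F^h\subset M))$, the product $\eta\star\xi$ will take the shape of a pull-cup-push expression over $F^{g,h}:=F^g\cap F^h$, multiplied by the $\sfT$-equivariant Euler class of the restricted obstruction bundle $\calR_M(g,h)|_{F^{g,h}}$ together with the Euler classes of the normal bundles encoding the passage into $\nu(F^{gh}\subset M)$. Expanding $(\eta\star\xi)\star\zeta$ and $\eta\star(\xi\star\zeta)$ via the excess intersection formula and the projection formula (exactly as in the proof of Theorem~\ref{associativity}), each side collapses to a single pull-cup-push over $F^{g,h,k}:=F^g\cap F^h\cap F^k$, weighted by the $\sfT$-equivariant Euler class of a virtual bundle built from pairwise obstruction bundles and nested normal bundles of fixed loci. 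The associativity then reduces to the virtual-bundle identity on $F^{g,h,k}$
\[
\calR_M(g,h)\oplus\calR_M(gh,k)\oplus\nu(F^{g,h,k}\subset F^{gh,k})\;\cong\;\calR_M(h,k)\oplus\calR_M(g,hk)\oplus\nu(F^{g,h,k}\subset F^{g,hk}),
\]
which one verifies weight-space by weight-space from the additivity property $a_\lambda(g)+a_\lambda(h)+a_\lambda((gh)^{-1})\in\Z$ built into the definition of $\calR_M$. Taking $\sfT$-equivariant Euler classes of both sides yields the equality of the two integrands.

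The main obstacle will be bookkeeping rather than new cohomological input: one must carefully track the additional Thom-type factors that arise because we are now pushing forward into $\nu(F^{gh}\subset M)$ rather than into $[M^{gh}/\sfS]$, and verify that all these extra normal-bundle Euler classes pair up symmetrically between the two iterated products after applying the virtual-bundle identity. Once this combinatorial matching is done, the argument becomes a direct transcription, to the normal-bundle setting, of the Jarvis-Kaufmann-Kimura-style proof already invoked for Theorem~\ref{associativity}, and the conclusion follows.
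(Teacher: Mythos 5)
Your plan is essentially the paper's proof: both arguments expand the two iterated products via the excess intersection and projection formulas, collapse everything to a single pushforward from $F^{g,h,m}$, and then check that the total $\sfT$-equivariant Euler class is symmetric in the three group elements. The one discrepancy is in your displayed virtual-bundle identity: since $c'_F(g,h)$ is the Euler class of $\calR_M(g,h)|_{F^{g,h}}\oplus\calE(g,h)$ with $\calE(g,h)=N_{M^{g,h}\subset M^{gh}}|_{F^{g,h}}\ominus N_{F^{g,h}\subset F^{gh}}$, and the base-change square contributes a further excess term $N_{F^{g,h}\subset F^{gh}}|_{F^{g,h,m}}\ominus N_{F^{g,h,m}\subset F^{gh,m}}$, the identity you need involves these extra summands (your version omits the $\calE$'s and carries the wrong sign on the triple-intersection normal bundle); the paper avoids a weight-by-weight check entirely by cancelling via $\calS_g\oplus\calS_{g^{-1}}=TM\ominus TM^g$ down to the manifestly symmetric $\ominus N_{F^{g,h,m}\subset F^{ghm}}\ominus N_{M^{ghm}\subset M}\oplus\calS_g\oplus\calS_h\oplus\calS_m\oplus\calS_{(ghm)^{-1}}$, which is exactly the ``bookkeeping'' you defer and is where the actual content of the proof sits.
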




\begin{thmx}[Theorem \ref{map of rings} below]
The natural restriction map 
\[
H_{orb,\sfR}([M/\sfS ]) \to \calNH_{\sfR}(\nu[M/\sfS ]^{\sfR})
\] 
is a homomorphism of graded associative rings.
\end{thmx}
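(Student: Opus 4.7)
The plan is to verify the ring homomorphism property one pair of summands at a time. Write $\rho$ for the restriction map; on the summand indexed by $g \in \sfS$, the identification $H^*_{\sfT}(\nu(F^g \subset M)) \cong H^*_{\sfT}(F^g)$ obtained from the $\sfT$-equivariant deformation retract $\nu(F^g \subset M) \to F^g$ shows that $\rho_g$ is nothing but the pullback $\iota_g^{\,*}$, where $\iota_g \colon F^g \hookrightarrow M^g$ is the inclusion of the locus $F^g = F \cap M^g$. Fix $\eta \in H^*_{\sfR}([M^g/\sfS])$ and $\xi \in H^*_{\sfR}([M^h/\sfS])$. By definition,
\[
\eta \odot \xi = e_{*}\bigl(e_1^{*}\eta \cup e_2^{*}\xi \cup c_M(g,h)\bigr),
\]
and we must compare $\rho_{gh}(\eta \odot \xi)$ with $\rho_g(\eta) \star \rho_h(\xi)$, the latter being assembled from the analogous pull-cup-push data on the loci $F^g, F^h, F^{g,h}, F^{gh}$ as prescribed in Section~\ref{star product}.

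The comparison reduces to three compatibility checks. First, naturality of pullback: $\iota_{g,h}^{\,*} \circ e_i^{\,*} = (e_i^F)^{*} \circ \iota_{\bullet}^{\,*}$ for $i = 1,2$, which is immediate from functoriality applied to the commutative diagrams
\[
\begin{array}{c}
\xymatrix{
F^{g,h}\ar[r]^{e_i^F}\ar[d]_{\iota_{g,h}} & F^{\bullet}\ar[d]^{\iota_{\bullet}} \\
M^{g,h}\ar[r]^{e_i} & M^{\bullet}
}
\end{array}.
\]
Second, restriction of the obstruction bundle: since $F$ is $\sfT$-invariant and $TM|_{M^{g,h}}$ admits a weight decomposition under the subgroup generated by $g$ and $h$, this decomposition restricts to one on $F^{g,h}$, so $\iota_{g,h}^{\,*} c_M(g,h)$ is the $\sfT$-equivariant Euler class of $\calR_M(g,h)|_{F^{g,h}}$, which is precisely the class appearing in the $\star$ product. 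Third, and most delicately, commutation of pushforward with restriction: $\iota_{gh}^{\,*} \circ e_{*}$ equals $(e^F)_{*} \circ \iota_{g,h}^{\,*}$ after correction by the Euler class of the excess bundle.

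The main obstacle is this third point. The square
\[
\begin{array}{c}
\xymatrix{
F^{g,h}\ar[r]^{e^F}\ar[d]_{\iota_{g,h}} & F^{gh}\ar[d]^{\iota_{gh}} \\
M^{g,h}\ar[r]^{e} & M^{gh}
}
\end{array}
\]
is set-theoretically Cartesian but generally not transverse, so the excess intersection formula introduces a correction equal to the Euler class of the excess bundle, namely the quotient of $\iota_{gh}^{\,*}\nu(M^{g,h}\subset M^{gh})$ by $\nu(F^{g,h}\subset F^{gh})$. This excess is computed explicitly from the $\sfT$-weight decomposition of $\nu(F\subset M)|_{F^{g,h}}$: it consists of those weight pieces fixed by $gh$ but not jointly by $g$ and $h$. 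The whole point of the definition of $\star$ in Section~\ref{star product} is that it bundles together precisely the restricted obstruction class with this excess Euler class, using only the $\sfT$-isotropy data on $\nu(F^{\bullet}\subset M)$. With this identification in hand, combining the three items via excess intersection yields $\rho_{gh}(\eta \odot \xi) = \rho_g(\eta) \star \rho_h(\xi)$.

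Finally, compatibility with the grading is automatic because pullback preserves cohomological degree and because the age shift by $\sum a_\lambda(g)$ depends only on the $\sfT$-weights of the normal bundle at the $\sfT$-fixed loci, information that the map $\rho$ manifestly preserves. Summing over all pairs $(g,h) \in \sfS \times \sfS$ then establishes that $\rho \colon H_{orb,\sfR}([M/\sfS]) \to \calNH_{\sfR}(\nu[M/\sfS]^{\sfR})$ is a homomorphism of graded associative rings.
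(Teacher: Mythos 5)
Your proposal is correct and takes essentially the same route as the paper: both arguments reduce the claim to naturality of pullback along the inclusions into $M^g$, $M^h$ together with the excess intersection formula for the (non-transverse) square comparing $F^{g,h}\hookrightarrow F^{gh}$ with $M^{g,h}\hookrightarrow M^{gh}$, and both hinge on the defining identity $c'_F(g,h)=j^*c_M(g,h)\cup \be_{\sfT}(\calE(g,h))$, i.e.\ that the $\star$-product's class is by construction the restricted obstruction class times the excess Euler class. The only nitpick is notational: the excess bundle is the quotient of $\iota_{g,h}^{\,*}\,\nu(M^{g,h}\subset M^{gh})$ (restriction to $F^{g,h}$), not $\iota_{gh}^{\,*}$ of it.
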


\noindent Thus when this map is injective over $\Q$ or $\Z$, we may apply our orbifold GKM theorem to compute $H_{orb,\sfR}([M/\sfS ])$ using only local isotropy data at the fixed orbifold points.

In the final section, we compute the $\sfR$-equivariant Chen-Ruan cohomology for the toric orbifolds and conclude with two presentations of the ring, first as a quotient ring, and second as a subring of a direct sum of polynomial rings.

\subsection*{Acknowledgements} The authors graciously thank Rebecca Goldin, Eugene Lerman, Frank Moore, Reyer Sjamaar, and Ed Swartz for many helpful conversations throughout the course of this project.  The first author thanks the Mathematical Sciences Research Institute for its hospitality and support in Spring 2010. The second author  would like to express his gratitude to the Algebraic Structure and its Application Research Institute at KAIST for providing him an excellent research environment in 2011-2012.

\section{Local normal forms}\label{def 2.1}
In this section, we recall basics from \cite{LT} adapted to our setting.  Our main focus is to provide a local normal form for a Hamiltonian $\sfR$-orbifold $[M/\sfS]$. Generically to define the Chen-Ruan invariants, it is best to think of orbifolds as stacks.  As we are interested in global quotients, though, we do not need to use this full machinery.

A {\bf locally free action} of a group $G$ on a space $X$ is one where the stabilizer subgroup $G_x$ of a point $x$ is finite for each $x\in X$. An {\bf orbifold} $[M/\sfS ]$ in this paper is defined as the quotient of a manifold $M$ by a locally free action of a compact torus $\sfS $. An orbifold chart for $[M/\sfS ]$ may be given as follows: let $x \in M$ and let $\sfS _x$ be the finite stabilizer of $x$ in $\sfS $. The group $\sfS _x$ acts on $T_xM$ and acts trivially on $T_x(\sfS\cdot x)$. Hence the orbifold chart around $[x/\sfS _x]$ is given by $\frac{T_xM}{T_x(\sfS x)}$ with the induced action of $\sfS _x$.  A {\bf symplectic orbifold} $([M/\sfS ], \omega)$ is an orbifold $[M/\sfS ]$ together with  an $\sfS $-invariant $2$-form $\omega \in \Omega^2(M)^{\sfS }$ such that for each $x \in M$, $\omega$ induces the $\sfS _x$-invariant $2$-form $\overline{\omega}$ on the chart $\frac{T_xM}{T_x(\sfS x)}$, which is closed at the origin and non-degenerate, i.e. the kernel of $\omega_x: T_xM \to T_x^*M$ is $T_x(\sfS x)$. A torus $\sfR$ {\bf acts on a symplectic orbifold} $([M/\sfS ],\omega)$ if there is a short exact sequence $\sfS  \inc \sfT \surj \sfR$ of compact tori, and the group $\sfT$ acts on $M$ extending the action of $\sfS $, and $\omega \in \Omega^2(M)^{\sfT}$ is a $\sfT$-invariant $2$-form.

The $\sfR$-action on $([M/\sfS ],\omega)$ is {\bf Hamiltonian} if there is a $\sfT$-invariant map $\mu: M  \to \frakr^*$ where $\frakr:=\Lie\ \sfR$, satisfying the Hamiltonian condition, i.e. for each $\xi \in \ft$, the infinitesimal vector field $\xi_M$ and $d\mu^{\xi}$ are related by 
\[
\omega(\xi_M,-) = d\mu^{\xi}
\]
where $\mu^{\xi}:M\to \R$ is the component of the moment map given by $x \mapsto \lan\mu(x),\xi\ran$.  This is well-defined since $\frakr^*= \Ann \fraks \subset \ft^*$.

For a function $f: M \to \R$ and a critical point $x \in M$ of $f$, the {\bf Hessian} of $f$ is the map $H(f)_x: T_xM \to T^*_xM$ defined by $w_x \mapsto d_x (\calL_wf)$, 
where $w$ is any local vector field around $x$ such that $w|_x=w_x$. Note that $H(f)_x$ does not depend on a local extension of $w_x$ to a vector field. 

A function $[f]: [M/\sfS ] \to \R$ on an orbifold $[M/\sfS ]$ is given by an $\sfS $-invariant function $f:M \to \R$.  An orbifold point is given by $[\sfS \cdot x/\sfS ]=[x/\sfS _x]$. An orbifold point $[x/\sfS _x] \in [M/\sfS ]$ is a {\bf critical orbifold point for $[f]$} if $d_xf=0$, i.e. $x$ is a critical point for $f$. This is a well-defined notion since $f$ is $\sfS $-invariant. Indeed if $d_xf=0$, then $\forall y\in \sfS \cdot x$, $d_yf=0$. Let $F:=\Crit(f) \subset M$ be the set of critical points for $f$. Then the $\sfS $-action preserves $F$ and therefore, if $F$ is a submanifold of $M$, then $[F/\sfS ]$ is a {\bf critical suborbifold }of $[M/\sfS ]$.  When extending Bott's version of Morse theory to orbifolds, a key hypothesis will be that the critical set is a suborbifold of $[M/\sfS]$.

Let $[f]: [M/\sfS ] \to \R$ be a function and $[x/\sfS _x] \in [M/\sfS ]$ a critical point.  Then $H(f)_x(w_x)=d_x(\calL_wf)=0$ for $w_x \in T_x(\sfS x)$, since  $\calL_w f=0$ on a neighborhood of $x$ by the $\sfS $-invariance of $f$. Hence $H(f)_x$ factors through
\[
H(f)_x: T_xM \to \frac{T_xM}{T_x(\sfS \cdot x)} \to T^*_xM.
\]
Furthermore, the image of $H(f)_x$ is contained in $\Ann T_x(\sfS  \cdot x)$. Thus we can define the induced {\bf orbifold Hessian} for $[f]$ as the $\sfS _x$-equivariant map
\[
\overline{H(f)_x}: \frac{T_xM}{T_x(\sfS \cdot x)} \to \left(\frac{T_xM}{T_x(\sfS \cdot  x)}\right)^* = \Ann T_x (\sfS  \cdot x).
\]
A critical suborbifold $[F/\sfS ]$ is {\bf non-degenerate} if $\forall x \in F$,
\[
\ker \overline{H(f)_x} = \frac{T_xF}{T_x(\sfS \cdot x)}.
\]
It is obvious that $[F/\sfS ]$ is non-degenerate if and only if $F$ is non-degenerate, since $F$ is non-degenerate when $\ker H(f)_x = T_xF$ for all $x\in F$.

Since our orbifold is presented as a global quotient by a torus, we may explicitly write down the slice theorem and the local normal form, following \cite{LT}. Let $[M/\sfS ]$ be a compact, connected Hamiltonian $\sfR$-orbifold where $\sfS  \inc \sfT \twoheadrightarrow \sfR$  is the torus extension such that $\sfT$ acts on $M$. The corresponding maps of Lie algebras are $\fraks \inc \ft \twoheadrightarrow \frakr$. We summarize what this implies as follows: 
\begin{enumerate}
\item the class $\omega\in \Omega^2(M)^{\sfS}$ is actually $\sfT$-invariant; 
\item the $\sfS $-action on $M$ is locally free; 
\item the map $\mu: M \to \frakr^*$ is a $\sfT$-invariant map; 
\item we have the Hamiltonian condition $\omega(\xi_M,-)=d\mu^{\xi}, \forall \xi \in \ft$; and
\item $\forall x \in M$, $\omega_x: T_xM \to T_x^*M$ induces an isomorphism
$$
\overline{\omega}_x: T_xM/T_x(\sfS \cdot x) \to (T_xM/T_x(\sfS \cdot x))^*=\Ann T_x(\sfS \cdot x).$$ 
\end{enumerate}
For each $x \in M$, let $\sfT_{\!x}$ denote the stabilizer in $\sfT$ of the point $x$, and $\ft_x:=\Lie(\sfT_{\!x})$ its Lie algebra.
\begin{remark}
For a form $\alpha \in \Omega^*(M)^{\sfT}$ to induce an $\sfR$-equivariant form on the orbifold $[M/\sfS]$, it must be \emph{$\sfS$-basic}, i.e. $i_{X_{\xi}}\alpha =0$ for each $\xi \in \fraks$. If we define $\sfR$-equivariant forms on $[M/\sfS]$ as suggested in \cite{LM}, this coincides with the space of $\sfT$-equivariant forms on $M$ that are $\sfS$-basic.
\end{remark}

We now turn to the linear algebra background needed to describe the  local normal form for a Hamitlonian $\sfR$-action on 
the orbifold $[M/\sfS]$.
\begin{lemma}\label{basic linear algebra}
For every point $x \in M$, there is a (non-canonical) $\sfT_x$-equivariant  isomorphism 
\[
T_xM \cong \Ann\!(\fraks \oplus\ft_x) \oplus (T_x(\sfT x))^{\perp_{\omega_x}}\ ,
\]
where the $\sfT_x$-action on $\Ann\! (\fraks \oplus\ft_x)$ is the coadjoint action (and so is trivial, as $\sfT_x$ is abelian). Thus we have a $\sfT_x$-equivariant isomorphism
\[
\frac{T_xM}{T_x(\sfT x)} \cong \Ann\! (\fraks \oplus\ft_x) \oplus \frac{(T_x(\sfT x))^{\perp_{\omega_x}}}{T_x(\sfT x)}.
\]
\end{lemma}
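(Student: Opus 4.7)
The plan is to obtain the decomposition as an equivariant splitting of the differential of the moment map at $x$. The first step is to pin down $\ker d\mu_x$ and $\text{image}(d\mu_x)$ as $\sfT_x$-subrepresentations of $T_xM$ and $\ft^*$ respectively. Because $\sfT$ is abelian and $\mu$ takes values in $\frakr^* = \Ann\fraks\subset\ft^*$ where the coadjoint $\sfT$-action is trivial, $\mu$ is actually $\sfT$-invariant; hence $d\mu_x\colon T_xM\to\Ann\fraks$ is $\sfT_x$-equivariant with trivial $\sfT_x$-action on the target. Using the Hamiltonian condition $\omega(\xi_M,-)=d\mu^\xi$ for $\xi\in\ft$, together with the fact that $\ker\omega_x=T_x(\sfS\cdot x)$ (from condition (5)), I would first check
\[
\ker d\mu_x \;=\; (T_x(\sfT x))^{\perp_{\omega_x}}.
\]

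Next, I would identify the image. A covector $\xi\in\ft$ annihilates $\text{image}(d\mu_x)$ iff $d\mu^\xi_x=0$, i.e.\ iff $\omega_x(\xi_M|_x,-)=0$, iff $\xi_M|_x\in\ker\omega_x=T_x(\sfS\cdot x)$, iff $\xi\in\ft_x+\fraks$. Since the $\sfS$-action is locally free we have $\ft_x\cap\fraks=0$, so the sum is direct and
\[
\text{image}(d\mu_x) \;=\; \Ann(\fraks\oplus\ft_x).
\]
This gives a short exact sequence of $\sfT_x$-representations
\[
0\to (T_x(\sfT x))^{\perp_{\omega_x}}\to T_xM \xrightarrow{\;d\mu_x\;} \Ann(\fraks\oplus\ft_x)\to 0.
\]
Because $\sfT_x$ is compact (indeed finite) and the target carries the trivial action, averaging produces a $\sfT_x$-equivariant section, yielding the first claimed (non-canonical) isomorphism.

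For the quotient version, I would observe that the $\sfT$-orbit is isotropic: for any $\xi,\eta\in\ft$, $\omega(\xi_M,\eta_M)|_x = d\mu^\xi(\eta_M)|_x=\eta_M(\mu^\xi)|_x=0$ because $\mu$ is $\sfT$-invariant. Hence $T_x(\sfT x)\subset (T_x(\sfT x))^{\perp_{\omega_x}}$, and in the direct-sum decomposition above the tangent to the $\sfT$-orbit lies entirely in the second summand. Quotienting by $T_x(\sfT x)$ therefore only affects the symplectic orthogonal factor, and gives the second isomorphism.

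The only non-routine step is the computation identifying $\text{image}(d\mu_x)$ with $\Ann(\fraks\oplus\ft_x)$, since it packages both pieces of input: the vanishing of $\mu$ on $\fraks$ (coming from $\mu$ landing in $\frakr^*$) and the vanishing of $\xi_M|_x$ for $\xi\in\ft_x$. Once this is in hand, the splitting of the exact sequence and the verification that $T_x(\sfT x)$ is isotropic are immediate consequences of compactness of $\sfT_x$ and of $\sfT$-invariance of $\mu$.
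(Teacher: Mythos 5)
Your proof is correct and follows essentially the same route as the paper: identify $\ker d_x\mu=(T_x(\sfT x))^{\perp_{\omega_x}}$ and $\operatorname{image}(d_x\mu)=\Ann(\fraks\oplus\ft_x)$ (the paper phrases the latter as injectivity of the dual map via $\tilde{\sfT}_x=\sfS\cdot\sfT_x$, which is the same computation as your annihilator argument), then split the resulting exact sequence equivariantly and use isotropy of the $\sfT$-orbit for the quotient statement. One small correction: $\sfT_x$ is compact but need not be finite (only $\sfS_x=\sfT_x\cap\sfS$ is finite, e.g.\ $\sfT_x$ is positive-dimensional at fixed orbifold points); compactness alone justifies the averaging, so the argument stands.
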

\begin{proof}
If $\xi \in \ft_x$, then the vector field $\xi_M|_x=0$, which implies that  $d_x\mu^{\xi}=0$ by property (4) above. Thus for any tangent vector $v_x$, we have $d_x\mu(v_x) \in \Ann \ft_x$ since $\lan d_x\mu(v_x),\xi\ran = \lan d_x\mu^{\xi}, v_x\ran$ for every $ \xi \in \ft_x$. Therefore the image of $T_xM$ under the map $d_x\mu$ lies in $$\Ann \fraks \cap \Ann \ft_x=\Ann(\fs\oplus \ft_x).$$ The map $d_x\mu$ is actually a surjection. 
Using property (4), $d_x\mu^{\xi} = 0$ implies $\xi_M|_x \in T_x(\sfS x)$. The dual map is $(d_x\mu)^*:\ft/(\fraks \oplus \ft_x) \to T_x^*M$.  Let $ \tilde{\ft}_x:=\{ \xi \in \ft \ \ |\ \ \xi_M|_x \in T_x(\sfS x)\}$ which is the Lie algebra of $\tilde{\sfT}_x:=\{t \in \sfT \ \ |\ \ tx \in \sfS x\}$ and then it follows that $\ker (d_x\mu)^* = \tilde{\ft}_x/ (\fraks \oplus \ft_x)$. We can show that $\tilde{\sfT}_x=\sfS \cdot \sfT_x$ and so $\tilde{\ft}_x=\fraks \oplus \ft_x$. In fact, 
if $st \in \sfS \cdot \sfT_x$ then $ stx \in \sfS x$ and conversely if $t x \in \sfS x$ (i.e. $tx=sx$ for some $s\in \sfS $), then $t^{-1}s \in \sfT_x$ and so $t \in \sfS\cdot \sfT_x$.
Thus $d_x\mu: T_xM \to \Ann (\fraks \oplus\ft_x)$ is surjective. By definition, 
$\ker (d_x\mu) = \{ v \in T_xM \ |\ \omega_x(\xi_M|_x, v)=0, \forall \xi \in \ft\}=(T_x(\sfT x))^{\perp_{\omega_x}} $. By choosing a $\sfT_x$-invariant splitting of the surjection $d_x\mu$, we have the isomorphism. The second claim follows from $ T_x(\sfT x) \subset (T_x(\sfT x))^{\perp_{\omega_x}}$. Indeed, $\omega_x(\xi_M|_x,\eta_M|_x)=\lan d_x\mu^{\xi}, \eta_M|_x\ran = \eta_M(\mu^{\xi})|_x=0$ for each  $\xi,\eta \in \ft$ where the last equality follows since $\mu^{\xi}:M \to \R$ is constant along $\sfT x$ and $\eta_M|_x \in T_x(\sfT x)$.
\qed\end{proof}

The following results are applications of standard results from the theory of Lie group actions.  
They are discussed further in \cite{LT}, and we have adapted them to our setting here.
\begin{theorem}[Slice theorem for the $\sfT$-action on $M$] 
\label{thm:slice}
For each point $x \in M$, there is a $\sfT$-invariant neighborhood $U$ of the orbit $\sfT \cdot x$ 
and a $\sfT$-equivariant isomorphism
\[
U \cong \sfT \times_{\sfT_x} \frac{T_xM}{T_x(\sfT x)},
\]
where $\sfT_x$ acts on $\sfT \times \frac{T_xM}{T_x(\sfT x)}$ by $(t,[v_x])\mapsto(ts^{-1},[s_*v_x])$.
\end{theorem}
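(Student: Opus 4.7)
The plan is to apply the classical slice theorem for compact Lie group actions, specialized to the torus $\sfT$ acting on the smooth manifold $M$. Since $\sfT$ is compact and acts smoothly on $M$, we may average any Riemannian metric to obtain a $\sfT$-invariant Riemannian metric $g$ on $M$. Using $g$, we take the orthogonal complement $V \subset T_xM$ to $T_x(\sfT \cdot x)$, which is a $\sfT_x$-invariant subspace (because both the metric and the tangent space to the orbit are $\sfT_x$-invariant, as $\sfT_x$ fixes $x$). The projection identifies $V$ with the normal space $T_xM/T_x(\sfT \cdot x)$ as $\sfT_x$-representations.

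Next, I would construct the candidate diffeomorphism. Using the $\sfT$-invariant exponential map $\exp_x: T_xM \to M$, define
\[
\Phi: \sfT \times_{\sfT_x} V \to M, \qquad \Phi([t,v]) = t \cdot \exp_x(v).
\]
The map is well-defined because $\exp_x$ commutes with the isometric $\sfT_x$-action: for $s \in \sfT_x$, $\Phi([ts^{-1}, s_* v]) = ts^{-1}\exp_x(s_* v) = ts^{-1} s \exp_x(v) = t\exp_x(v)$. It is $\sfT$-equivariant by construction, since $\sfT$ acts on the first factor of $\sfT \times_{\sfT_x} V$ by left multiplication, and this translates to the action of $\sfT$ on $M$.

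It remains to show that $\Phi$ is a diffeomorphism from a $\sfT$-invariant neighborhood of the zero section $\sfT/\sfT_x \subset \sfT \times_{\sfT_x} V$ onto a $\sfT$-invariant neighborhood $U$ of $\sfT \cdot x$. First one computes the differential $d\Phi_{[e,0]}$: along $V$ it is the inclusion $V \hookrightarrow T_xM$, and along the horizontal direction it is the infinitesimal action $\ft/\ft_x \to T_x(\sfT \cdot x)$. Together these give an isomorphism $T_{[e,0]}(\sfT \times_{\sfT_x} V) \cong (\ft/\ft_x) \oplus V \iso T_xM$, so by the inverse function theorem $\Phi$ is a local diffeomorphism near $[e,0]$, and by $\sfT$-equivariance near every point of the zero section. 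The main obstacle is upgrading this to a global diffeomorphism onto a neighborhood of the whole orbit: here one uses compactness of $\sfT \cdot x \cong \sfT/\sfT_x$ together with the standard argument that if $\Phi$ fails to be injective on arbitrarily small $\sfT$-invariant neighborhoods, one can extract convergent sequences contradicting the local injectivity along the orbit. Restricting to such a neighborhood yields the claimed $\sfT$-equivariant diffeomorphism, and composing with the identification $V \cong T_xM/T_x(\sfT \cdot x)$ gives the statement.
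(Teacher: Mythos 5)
Your proof is correct and is exactly the standard tubular-neighborhood/slice argument (invariant metric, equivariant exponential map, local diffeomorphism plus compactness of the orbit) that the paper itself relies on: the paper gives no proof here, stating only that this is an application of standard results from the theory of Lie group actions adapted from \cite{LT}. Your verification of well-definedness under the $\sfT_x$-twist and the identification $V\cong T_xM/T_x(\sfT\cdot x)$ match the conventions in the statement.
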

As a corollary of Lemma~\ref{basic linear algebra} and the Slice Theorem~\ref{thm:slice}, we obtain
\begin{cor}[Local  normal form for the $\sfT$-action on $M$]\label{cor:normal form}
For each point $x \in M$, there is a $\sfT$-invariant neighborhood $U$ of $\sfT\cdot x$ such that we have a $\sfT$-equivariant isomorphism
\[
\Psi: U \cong \sfT \times_{\sfT_x} \left( \Ann (\fraks \oplus\ft_x) \oplus \frac{(T_x(\sfT x))^{\perp_{\omega_x}}}{T_x(\sfT x)}\right).
\]
\end{cor}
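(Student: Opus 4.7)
The proof is a direct composition of the two preceding results, so my plan is to assemble them carefully and check that the equivariance in Lemma~\ref{basic linear algebra} is exactly what is needed to pass the decomposition through the twisted product in Theorem~\ref{thm:slice}.

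First I would invoke the Slice Theorem~\ref{thm:slice} to obtain a $\sfT$-invariant neighborhood $U$ of the orbit $\sfT\cdot x$ together with a $\sfT$-equivariant isomorphism
\[
U \;\cong\; \sfT \times_{\sfT_x} \frac{T_xM}{T_x(\sfT x)}.
\]
Next, Lemma~\ref{basic linear algebra} supplies a (non-canonical) $\sfT_x$-equivariant splitting
\[
\frac{T_xM}{T_x(\sfT x)} \;\cong\; \Ann(\fraks \oplus \ft_x) \;\oplus\; \frac{(T_x(\sfT x))^{\perp_{\omega_x}}}{T_x(\sfT x)},
\]
where the first summand carries the (trivial) coadjoint action and the second carries the induced linear isotropy action of $\sfT_x$. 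Since the associated bundle $\sfT \times_{\sfT_x} (-)$ is a functor from $\sfT_x$-representations to $\sfT$-equivariant bundles over $\sfT/\sfT_x$, applying it to this $\sfT_x$-equivariant isomorphism yields a $\sfT$-equivariant isomorphism
\[
\sfT \times_{\sfT_x} \frac{T_xM}{T_x(\sfT x)} \;\cong\; \sfT \times_{\sfT_x} \left( \Ann(\fraks \oplus \ft_x) \;\oplus\; \frac{(T_x(\sfT x))^{\perp_{\omega_x}}}{T_x(\sfT x)} \right).
\]
Composing this with the slice identification gives the desired $\Psi$.

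The only point that requires real care, and which I expect to be the main (though mild) obstacle, is verifying that the splitting from Lemma~\ref{basic linear algebra} can genuinely be chosen $\sfT_x$-equivariantly; this is where the compactness of $\sfT_x$ enters, since we choose a $\sfT_x$-invariant complement to $\ker(d_x\mu)$ by averaging with the Haar measure on $\sfT_x$. Once that equivariant splitting is fixed, the rest is functorial and produces the stated local normal form. No further geometric input is needed, and in particular the Hamiltonian hypothesis has already been fully exploited inside the proof of Lemma~\ref{basic linear algebra}.
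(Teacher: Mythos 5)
Your proof is correct and is exactly the paper's argument: the corollary is stated as an immediate consequence of composing the Slice Theorem~\ref{thm:slice} with the $\sfT_x$-equivariant decomposition of $T_xM/T_x(\sfT x)$ from Lemma~\ref{basic linear algebra}, with the equivariant splitting already secured (by averaging) inside the proof of that lemma. Nothing further is needed.
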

Both Theorem~\ref{thm:slice} and Corollary~\ref{cor:normal form} may be rephrased as an orbifold slice theorem and an orbifold local normal form theorem, as in \cite{LT}.
\begin{theorem}[Orbifold slice theorem for the $\sfR$-action on {$[M/\sfS ]$}] 
For every point $[x/\sfS _x] \in [M/\sfS ]$, there is an $\sfR$-invariant orbifold neighborhood $[U/\sfS ]$ of $[x/\sfS_x ]$ such that we have an $\sfR$-equivariant isomorphism
$$[U/\sfS ] \cong [(\sfT \times_{\sfT_x} W)/\sfS ]=\sfR \times_{\sfR_{[x/\sfS _x]}} [W/\sfS _x],$$ 
where $\sfR_{[x/\sfS _x]} := \sfT_x/\sfS _x$ and $W:=\frac{T_xM}{T_x(\sfT x)}$.
\end{theorem}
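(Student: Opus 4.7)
The plan is to deduce the orbifold slice theorem directly from Theorem~\ref{thm:slice} (the $\sfT$-equivariant slice theorem for the action on $M$) by passing to the $\sfS$-quotient and then repackaging the associated bundle construction via the short exact sequences
$$\sfS \into \sfT \surj \sfR \qquad \text{and} \qquad \sfS_x \into \sfT_x \surj \sfR_{[x/\sfS_x]},$$
where $\sfS_x := \sfS \cap \sfT_x$. No delicate analytic input is required; the argument is a bookkeeping exercise with two-step quotients.

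First I would invoke Theorem~\ref{thm:slice} to obtain a $\sfT$-invariant neighborhood $U$ of the orbit $\sfT \cdot x$ together with a $\sfT$-equivariant diffeomorphism $U \cong \sfT \times_{\sfT_x} W$, where $W = T_xM/T_x(\sfT \cdot x)$. Since $\sfS \subset \sfT$ acts locally freely on $M$, the restriction of this action to $U$ is still locally free, so $[U/\sfS]$ is a well-defined orbifold open neighborhood of $[x/\sfS_x]$, and it is $\sfR$-invariant because $U$ is $\sfT$-invariant. Passing the $\sfT$-equivariant isomorphism to the $\sfS$-quotient yields the first identification $[U/\sfS] \cong [(\sfT \times_{\sfT_x} W)/\sfS]$ automatically.

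For the second identification I would perform the quotient in two stages. The group $\sfS$ acts on $\sfT \times_{\sfT_x} W$ only through left multiplication on the $\sfT$-factor, so dividing by $\sfS$ first replaces $\sfT$ by $\sfR = \sfT/\sfS$ and produces the associated bundle $\sfR \times_{\sfT_x} W$. The action of $\sfT_x$ on $\sfR$ factors through its image $\sfT_x/\sfS_x = \sfR_{[x/\sfS_x]}$; splitting the $\sfT_x$-action accordingly, the normal subgroup $\sfS_x$ acts trivially on the $\sfR$-factor and as the orbifold structure group on $W$, while the quotient $\sfT_x/\sfS_x$ acts freely on $\sfR$ by right multiplication. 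Organizing the quotient in this order gives
$$\sfR \times_{\sfT_x} W \;\cong\; \sfR \times_{\sfR_{[x/\sfS_x]}} [W/\sfS_x],$$
which is the desired form. All maps are manifestly $\sfR$-equivariant because they are induced from $\sfT$-equivariant maps on the cover.

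The only point that requires care, and which I expect to be the mildest obstacle, is the verification that $\sfS_x = \sfS \cap \sfT_x$ and hence that the image of $\sfT_x$ in $\sfR$ is exactly $\sfR_{[x/\sfS_x]}$; this is precisely the identification $\tilde{\sfT}_x = \sfS \cdot \sfT_x$ already established in the proof of Lemma~\ref{basic linear algebra}, so the argument is complete once this observation is combined with the two-step quotient above.
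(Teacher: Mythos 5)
Your proposal is correct and follows the paper's intended route: the paper gives no separate argument for this theorem, simply noting that it is a rephrasing of the $\sfT$-equivariant slice theorem (Theorem~\ref{thm:slice}) obtained by passing to the $\sfS$-quotient, which is exactly your two-step quotient bookkeeping. The only substantive check, that $\sfS_x=\sfS\cap\sfT_x$ is the kernel of $\sfT_x\to\sfR$ so that $\sfT_x$ acts on $\sfR$ through $\sfR_{[x/\sfS_x]}$, is handled correctly in your last paragraph.
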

\begin{cor}[Orbifold local normal form for the $\sfR$-action on {$[M/\sfS]$}]
\label{cor:orbifold lnf}
For each $[x/\sfS _x] \in [M/\sfS ]$, there is an $\sfR$-invariant orbifold neighborhood $[U/\sfS ]$ of $[M/\sfS ]$ such that we have an $\sfR$-equivariant isomorphism
\[
 [U/\sfS ] \cong \sfR \times_{\sfR_{[x/\sfS _x]} }\left(T_1^*(\sfR/\sfR_{[x/\sfS _x]}) \oplus \left[\left.\frac{(T_x(\sfT x))^{\perp_{\omega_x}}}{T_x(\sfT x)}\right/  \sfS _x  \right]\right)
\]
where $\sfR_{[x/\sfS _x]} = \sfT_x/\sfS _x$.
\end{cor}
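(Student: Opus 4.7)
The plan is to derive Corollary~\ref{cor:orbifold lnf} directly from the orbifold slice theorem that precedes it, using the $\sfT_x$-equivariant decomposition provided by Lemma~\ref{basic linear algebra} and a Lie-algebraic identification of the cotangent factor. The orbifold slice theorem already gives the $\sfR$-equivariant isomorphism
\[
[U/\sfS] \;\cong\; \sfR \times_{\sfR_{[x/\sfS_x]}} [W/\sfS_x], \qquad W \,:=\, \frac{T_xM}{T_x(\sfT x)},
\]
so what remains is to identify $[W/\sfS_x]$ with the orbifold
\[
T_1^*(\sfR/\sfR_{[x/\sfS_x]}) \;\oplus\; \left[\left.\frac{(T_x(\sfT x))^{\perp_{\omega_x}}}{T_x(\sfT x)}\right/ \sfS_x\right]
\]
in an $\sfR_{[x/\sfS_x]}$-equivariant manner.

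The first step is to invoke Lemma~\ref{basic linear algebra} to split
\[
W \;\cong\; \Ann(\fraks \oplus \ft_x) \;\oplus\; \frac{(T_x(\sfT x))^{\perp_{\omega_x}}}{T_x(\sfT x)}
\]
$\sfT_x$-equivariantly. Since the $\sfT_x$-action on $\Ann(\fraks \oplus \ft_x)$ is the coadjoint action of the abelian torus $\sfT_x$ (hence trivial), the restricted action of $\sfS_x \subset \sfT_x$ is also trivial. Therefore the orbifold quotient by $\sfS_x$ respects this direct sum decomposition, and
\[
[W/\sfS_x] \;\cong\; \Ann(\fraks \oplus \ft_x) \;\oplus\; \left[\left.\frac{(T_x(\sfT x))^{\perp_{\omega_x}}}{T_x(\sfT x)}\right/ \sfS_x\right],
\]
with the residual $\sfR_{[x/\sfS_x]} = \sfT_x/\sfS_x$ acting trivially on the first summand and in the natural way on the second.

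The second step, which is the main substantive point, is to identify the linear factor $\Ann(\fraks \oplus \ft_x)$ with $T_1^*(\sfR/\sfR_{[x/\sfS_x]})$ as an $\sfR_{[x/\sfS_x]}$-representation. Because the $\sfS$-action on $M$ is locally free, the stabilizer algebra $\fraks \cap \ft_x$ is trivial, so the projection $\ft \twoheadrightarrow \frakr = \ft/\fraks$ restricts to an injection of $\ft_x$ into $\frakr$; this identifies $\Lie(\sfR_{[x/\sfS_x]})$ with (the image of) $\ft_x$ inside $\frakr$. Consequently,
\[
T_1(\sfR/\sfR_{[x/\sfS_x]}) \;=\; \frakr/\ft_x \;=\; \ft/(\fraks \oplus \ft_x),
\]
and dualizing yields $T_1^*(\sfR/\sfR_{[x/\sfS_x]}) \cong \Ann(\fraks \oplus \ft_x) \subset \ft^*$. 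The $\sfR_{[x/\sfS_x]}$-action on both sides is the (trivial) coadjoint action, so this identification is $\sfR_{[x/\sfS_x]}$-equivariant.

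Combining these two steps inside the associated bundle $\sfR \times_{\sfR_{[x/\sfS_x]}} (-)$ produced by the orbifold slice theorem gives the stated isomorphism. The main obstacle is the bookkeeping in the second step: one must use local freeness of $\sfS$ to know that $\fraks \cap \ft_x = 0$ before one can meaningfully identify $\ft_x$ with a subalgebra of $\frakr$ and thereby recognize $\Ann(\fraks \oplus \ft_x)$ as the cotangent space $T_1^*(\sfR/\sfR_{[x/\sfS_x]})$; everything else is a direct unpacking of Lemma~\ref{basic linear algebra}, Corollary~\ref{cor:normal form}, and the orbifold slice theorem.
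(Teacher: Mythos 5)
Your argument is correct and is exactly the derivation the paper intends: the corollary is stated there without proof as an immediate consequence of the orbifold slice theorem together with the splitting of $W=\frac{T_xM}{T_x(\sfT x)}$ from Lemma~\ref{basic linear algebra}, and your two steps (triviality of the $\sfS_x$-action on $\Ann(\fraks\oplus\ft_x)$, and the identification $T_1^*(\sfR/\sfR_{[x/\sfS_x]})\cong\Ann(\fraks\oplus\ft_x)$ via $\fraks\cap\ft_x=0$) supply precisely the bookkeeping the paper leaves implicit. No gaps.
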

\begin{remark}
We can define and derive everything in this section by letting $\sfS $ be a subgroup of $\sfT$, not necessarily a (connected) subtorus.
\end{remark}

\section{Orbifold fixed points and the orbifold 1-skeleton}
In this section, we use the local normal form of Corollary~\ref{cor:orbifold lnf} to determine the {\bf orbifold fixed points} and {\bf orbifold 1-skeleton} of $[M/S]$.

\subsection{Orbifold fixed points $[F/\sfS ]$}\label{se:fixed}
The set $[M/\sfS]^{\sfR}$ of fixed orbifold points in $[M/\sfS ]$ with respect to the $\sfR$-action is a symplectic  suborbifold, cf. \cite[p. 4206]{LT}. If we let $F:=\{ x \in M\ |\ \sfT x = \sfS x\}$, then $[F/\sfS] = [M/\sfS]^{\sfR}$. Moreover, since $\sfS$ acts on $M$ locally freely, we have that $x\in F$ if and only if $\ft/(\fs\oplus \ft_x) = 0$. Thus the $\sfT$-equivariant local normal form at a point $x\in F$ becomes $U_x\cong \sfT\times_{\sfT_x} W$ where $W:=\frac{T_xM}{T_x(\sfT x)}$. In this section, we compute $F$ explicitly by using this normal form.

Let $\sfT_{x,1}$ be the connected component of $\sfT_x$ containing the identity element. Then $\sfT_x=\sfT_{x,1}\sfS _x$. In fact, for all $t_x \in \sfT_x$, there are $t_{x,1}\in \sfT_{x,1}$ and $s \in \sfS$ such that $t_x=t_{x,1}s$ since $\sfT=\sfT_{x,1} \sfS $. Therefore $s \in \sfT_x \cap \sfS = \sfS_x$.
\begin{lemma}\label{technical lemma}
For $\alpha \in \Hom(\sfT_x,S^1)$, define $\overline \alpha$ to be the induced map $\sfT_x/\sfS _x \to S^1/\alpha(\sfS _x)$. Then for $\alpha,\beta \in \Hom(\sfT_x,S^1)$, 
\[
\alpha|_{\sfT_{x,1}} = \beta|_{\sfT_{x,1}} \ \ \Leftrightarrow \ \ \overline{\alpha-\beta}=0.
\]
\end{lemma}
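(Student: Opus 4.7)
The plan is to translate the condition $\overline{\alpha - \beta} = 0$ into a statement about images, and then exploit the structural fact $\sfT_x = \sfT_{x,1} \sfS_x$ (which is already established in the paragraph immediately preceding the lemma) together with connectedness/finiteness arguments.

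First I would unpack the definitions. Since $\Hom(\sfT_x, S^1)$ is an abelian group (writing the group law on $S^1$ additively), the difference $\gamma := \alpha - \beta$ is itself a homomorphism $\sfT_x \to S^1$, and $\overline{\gamma}$ is the induced homomorphism $\sfT_x/\sfS_x \to S^1/\gamma(\sfS_x)$. The condition $\overline{\gamma} = 0$ is equivalent to
\[
\gamma(\sfT_x) \subseteq \gamma(\sfS_x),
\]
and since the reverse inclusion is automatic, this is equivalent to $\gamma(\sfT_x) = \gamma(\sfS_x)$.

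For the forward direction $(\Rightarrow)$, assuming $\alpha|_{\sfT_{x,1}} = \beta|_{\sfT_{x,1}}$, we have $\gamma|_{\sfT_{x,1}} = 0$. Using $\sfT_x = \sfT_{x,1}\sfS_x$, every $t \in \sfT_x$ can be written $t = us$ with $u \in \sfT_{x,1}$ and $s \in \sfS_x$, so $\gamma(t) = \gamma(s) \in \gamma(\sfS_x)$. Hence $\overline{\gamma} = 0$.

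For the converse $(\Leftarrow)$, the main point—and the only place where something nontrivial happens—is to rule out that $\gamma|_{\sfT_{x,1}}$ is a nonzero homomorphism into a finite subgroup. Assume $\overline{\gamma} = 0$, so $\gamma(\sfT_x) = \gamma(\sfS_x)$. Since $\sfS_x$ is finite, $\gamma(\sfS_x)$ is a finite subgroup of $S^1$. Now $\gamma|_{\sfT_{x,1}}$ is a continuous homomorphism from the connected torus $\sfT_{x,1}$ into the finite set $\gamma(\sfS_x)$; its image is therefore connected, contains the identity, and is finite, hence trivial. Thus $\alpha|_{\sfT_{x,1}} = \beta|_{\sfT_{x,1}}$. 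The only real obstacle is recognizing that the decomposition $\sfT_x = \sfT_{x,1}\sfS_x$ (which pairs a connected piece with a finite piece) is exactly what makes the equivalence work; once that is in hand, the argument is a two-line topological observation.
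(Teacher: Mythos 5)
Your proof is correct and follows essentially the same route as the paper: the forward direction uses the decomposition $\sfT_x=\sfT_{x,1}\sfS_x$ to write $\gamma(t)=\gamma(s)$, and the converse uses that a continuous homomorphism from the connected group $\sfT_{x,1}$ into the finite (discrete) group $\gamma(\sfS_x)$ must be trivial. Your write-up just makes the intermediate step $\overline{\gamma}=0\Leftrightarrow\gamma(\sfT_x)\subseteq\gamma(\sfS_x)$ slightly more explicit than the paper does.
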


\proof Since $\sfT_x=\sfT_{x,1}\cdot \sfS _x$, every element $t \in \sfT_x$ can be written as  
$t=t_1\cdot s$ for some element $t_1\in \sfT_{x,1}$ and $s\in  \sfS _x$. Thus, if $\alpha|_{\sfT_{x,1}} = \beta|_{\sfT_{x,1}}$, we have 
$$\alpha(t)(\beta(t))^{-1} = \alpha(s)(\beta(s))^{-1} \in (\alpha-\beta)(\sfS _x).$$ 
On the other hand, if $(\alpha-\beta)(\sfT_{x,1}) \subset (\alpha-\beta)(\sfS _x)$, 
then $(\alpha-\beta)(\sfT_{x,1})=1$ since $\sfT_{x,1}$ is connected and $\sfS _x$ 
is discrete. \qed

Recall that in the local normal form, we may identify  $W:=\frac{T_xM}{T_x(\sfT x)}=\frac{(T_x(\sfT x))^{\perp_{\omega_x}}}{T_x(\sfT x)}$.  This
vector space is equipped with a $\sfT_{x,1}$-action. Let
$$\displaystyle{W=\bigoplus_{\lambda \in \Hom(\sfT_{x,1},S^1)} \overline W_{\lambda}}$$ 
be the weight decomposition of the $\sfT_{x,1}$-action on $W$.

\begin{lemma}\label{chartF}
Under the $\sfT$-equivariant local normal form isomorphism $\Psi$, for each point $x\in F$, we have 
$$F \cap U_x \cong \sfT \times_{\sfT_x} \overline W_{0}.$$ 
In particular, $F$ is a manifold.
\end{lemma}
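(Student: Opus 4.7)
The plan is to work entirely inside the local normal form chart $U_x \cong \sfT \times_{\sfT_x} W$ and identify which points $y = [t,w]$ satisfy the orbifold-fixed-point condition $\sfT y = \sfS y$. Pass to Lie algebras: $y\in F$ if and only if $\ft_y + \fraks = \ft$, where $\ft_y$ denotes the Lie algebra of the stabilizer $\sfT_y$. Indeed $\sfT y = \sfS y$ means every infinitesimal generator $\xi_M|_y$ lies in $T_y(\sfS y)$, which, together with the locally free $\sfS$-action, amounts to $\ft = \ft_y + \fraks$.

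Next I would compute $\ft_y$ at a point $y = [t,w]$ in the normal form. Since $\sfT_x$ is abelian, conjugation is trivial, so $\sfT_y = (\sfT_x)_w$ and therefore $\ft_y = (\ft_x)_w$, the Lie algebra of the isotropy in $\sfT_x$ of $w$ under the linear $\sfT_x$-action on $W$. Using the weight decomposition $W = \bigoplus_\lambda \overline W_\lambda$ of the $\sfT_{x,1}$-action (and the fact that $\sfT_x$ preserves each weight space since it commutes with the connected subgroup $\sfT_{x,1}$), one gets
\[
(\ft_x)_w \;=\; \bigcap_{\lambda\,:\,w_\lambda\neq 0}\ker\lambda \;\subset\;\ft_x.
\]

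Now I would combine this with the hypothesis $x\in F$, which by the same Lie algebra criterion gives $\ft = \fraks + \ft_x$. Because $\sfS$ acts locally freely, $\fraks \cap \ft_x = 0$, so the composite $\ft_x \hookrightarrow \ft \twoheadrightarrow \ft/\fraks$ is a linear isomorphism. Under this identification, the condition $(\ft_x)_w + \fraks = \ft$ becomes $(\ft_x)_w = \ft_x$, which holds precisely when every nonzero weight $\lambda$ has $w_\lambda = 0$, i.e.\ $w \in \overline W_0$.

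Putting these steps together yields $F\cap U_x = \Psi^{-1}(\sfT\times_{\sfT_x}\overline W_0)$, which is the desired $\sfT$-equivariant description. Smoothness of $F$ then follows immediately: each $\sfT \times_{\sfT_x}\overline W_0$ is a manifold (the associated bundle of a vector space over the homogeneous space $\sfT/\sfT_x$), and these charts cover $F$. The main technical obstacle I anticipate is the bookkeeping between the discrete part $\sfS_x$ and the connected part $\sfT_{x,1}$ of $\sfT_x$: one must check that $\overline W_0$, defined as a $\sfT_{x,1}$-weight space, really is invariant under all of $\sfT_x$, and that passing from the Lie-algebra condition $(\ft_x)_w = \ft_x$ back to the group-level condition $\sfT = \sfS\cdot (\sfT_x)_w$ does not lose any finite-index subtleties, both of which boil down to $\sfT_x$ being abelian and the connected-component observation $\sfT_x = \sfT_{x,1}\sfS_x$ used just before Lemma~\ref{technical lemma}.
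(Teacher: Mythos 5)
Your proposal is correct, and it reaches the same endpoint ($F\cap U_x\cong \sfT\times_{\sfT_x}\overline W_0$) by a genuinely different mechanism. The paper works at the group level: it unwinds the condition $\sfT y=\sfS y$ directly on equivalence classes $[r,v]$ in the normal form, obtaining the condition ``$\forall t_x\in\sfT_x\ \exists s_x\in\sfS_x$ with $(t_xs_x)_*v=v$,'' rewrites this on the $\sfT_x$-weight decomposition $W=\bigoplus_\alpha W_\alpha$ as $\overline\alpha=0$ for every $\alpha$ with $v_\alpha\neq 0$, and then invokes Lemma~\ref{technical lemma} to translate $\overline\alpha=0$ into $\alpha|_{\sfT_{x,1}}=0$, i.e.\ $v\in\overline W_0$. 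You instead linearize from the start: you use the criterion $y\in F\Leftrightarrow \ft=\fraks\oplus\ft_y$ (which the paper itself records at the top of Section~3.1, and which is legitimate because $\sfT$ is connected and $\sfS\cdot\sfT_{y,1}$ is a subtorus with Lie algebra $\fraks+\ft_y$), compute $\sfT_y=(\sfT_x)_w$ so that $\ft_y=\bigcap_{\lambda:\,w_\lambda\neq0}\ker d\lambda$, and observe that for a connected $\sfT_{x,1}$ a nontrivial character has $d\lambda\neq 0$, forcing $w_\lambda=0$ for all $\lambda\neq0$. The payoff of your route is that the finite groups $\sfS_x$ and $\sfT_x/\sfT_{x,1}$ wash out at the Lie-algebra level, so Lemma~\ref{technical lemma} is not needed; the cost is that you must separately justify the group-to-Lie-algebra equivalence and (as you note) the $\sfT_x$-invariance of $\overline W_0$, both of which you correctly reduce to commutativity and connectedness. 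Your stabilizer computation $\sfT_y=(\sfT_x)_w$ is also exactly the content of the paper's later Lemma~\ref{global isotropy}, so your argument in effect front-loads that lemma.
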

\proof Recall that $\sfT$ acts on $[r,v] \in \sfT \times_{\sfT_x} W$ by $t\cdot [r,v]=[tr,v]$. Moreover, two equivalence classes $[r_1,v_1]=[r_2,v_2]$ are equal if and only if there is some $t_x \in \sfT_x$ such that $(r_2t_x^{-1},t_{x*}v_2)=(r_1,v_1)$, or equivalently if and only if $r_2r_1^{-1} \in \sfT_x$ and $(r_2r_1^{-1})_*v_2=v_1$. Therefore
\begin{eqnarray*}
&& \hskip -0.5in [r,v]  \mbox{ is in the image of }F\cap U_x \mbox{ under } \Psi \\ &\Leftrightarrow& \forall t \in \sfT,  \exists s \in \sfS , \ \  \mbox{such that} \ \ t\cdot[r,v] = s\cdot [r,v],  \ \ \ \mbox{by definition of } F\\ 
&\Leftrightarrow& \forall t \in \sfT,  \exists s \in \sfS , \ \  \mbox{such that} \ \ st^{-1} \in \sfT_x \mbox{ and } (st^{-1})_*v=v \\
&\Leftrightarrow& \forall t_x \in \sfT_x,  \exists s_x \in \sfS _x, \ \  \mbox{such that} \ \ (t_xs_x)_*v=v .
\end{eqnarray*}
For the $(\Leftarrow)$ of the last equivalence, we observe that for a given element $t \in \sfT$, the element $s$  in  $\sfS $ such that $s^{-1}t \in \sfT_x$ is unique up to $\sfS _x=\sfS \cap \sfT_x$.
Let 
\[
W_F:=\{v \in W\ |\ \forall t_x \in \sfT_x,  \exists s_x \in \sfS _x \ \mbox{ such that }\ (t_xs_x)_*v=v \}.
\] Let $W=\bigoplus_{\alpha \in \Hom(\sfT_x,S^1)} W_{\alpha}$. Then we can also write as $W_F = \bigoplus_{\alpha \in \Hom(\sfT_x,S^1)} W_{\alpha}$, where the direct sum runs over $\alpha \in \Hom(\sfT_x,S^1)$ such that for each $t_x \in \sfT_x$, there is an $s_x \in \sfS _x$ such that $\alpha(t_xs_x)=1$. However this condition is equivalent to $\alpha(\sfT_x)=\alpha(\sfS_x)$, that is, $\overline{\alpha}=0$. Thus by Lemma~\ref{technical lemma}, $W_F=\overline W_0$, as desired.
\qed
\begin{remark}\label{rem 3.3}
We may also write $F$ as
\begin{eqnarray*}
F & = & \left\{x \in M \ \Big|\ \xi_M|_x \in T_x(\sfS x), \forall \xi \in \ft\right\} \\
&  = &\left\{x \in M \ \left|\ [\xi_M|_x]=0 \mbox{ in } \frac{T_xM}{T_x(\sfS x)}, \forall \xi \in \ft\right.\right\}.
\end{eqnarray*}
\end{remark}
We will need the following technical lemma to prove Theorem~\ref{thm:injectivity}.

\begin{lemma}\label{global isotropy}
 Let $y \in F \cap U_x$ for $x \in F$, then $\sfT_y=\bigcap_{v_{\alpha}\not=0} \ker \alpha$, 
 where  $[t,v] \in \sfT \times_{\sfT_x} \overline W_{0}$ corresponds to $y$ under the local 
 normal form $\Psi$, and $v_{\alpha}$ is the $\alpha$-component of $v$ in the weight 
 decomposition $$W=\bigoplus_{\alpha \in \Hom(\sfT_x,S^1)} W_{\alpha}.$$ 
 In particular, $\sfT_{x,1}=\sfT_{x',1}$ for all $x'$ in the connected component of $F$ containing $x$.
\end{lemma}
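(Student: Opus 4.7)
The plan is to read off $\sfT_y$ directly from the local normal form of Theorem~\ref{thm:slice}. Under the $\sfT$-equivariant isomorphism $\Psi$, the point $y$ corresponds to a class $[t,v] \in \sfT \times_{\sfT_x} W$, and $\sfT$ acts on the first factor by left translation. An element $t' \in \sfT$ fixes $[t,v]$ if and only if there exists $s \in \sfT_x$ with $t't = ts$ and $s_*v = v$; because $\sfT$ is abelian this forces $t' = s \in \sfT_x$ with $s_*v = v$. Hence $\sfT_y = \{ s \in \sfT_x \mid s_*v = v \}$, and, in particular, $\sfT_y \subseteq \sfT_x$.

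Next I would decompose $v = \sum_\alpha v_\alpha$ along the weight decomposition $W = \bigoplus_{\alpha \in \Hom(\sfT_x, S^1)} W_\alpha$, noting that $s_* v_\alpha = \alpha(s) v_\alpha$. Then $s_*v = v$ if and only if $\alpha(s) = 1$ for every $\alpha$ with $v_\alpha \neq 0$, which gives the desired formula
\[
\sfT_y = \bigcap_{v_\alpha \neq 0} \ker \alpha .
\]

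For the \emph{in particular} clause, Lemma~\ref{chartF} tells us that $y \in F \cap U_x$ forces $v \in \overline W_0$, so only weights $\alpha$ with $\overline{\alpha} = 0$ can contribute a nonzero component $v_\alpha$. By Lemma~\ref{technical lemma} each such $\alpha$ satisfies $\alpha|_{\sfT_{x,1}} = 1$, so $\sfT_{x,1} \subseteq \ker \alpha$, and intersecting yields $\sfT_{x,1} \subseteq \sfT_y$. Since $\sfT_{x,1}$ is connected and contains the identity, it is automatically contained in the identity component $\sfT_{y,1}$. Conversely, the inclusion $\sfT_y \subseteq \sfT_x$ from the first part, together with the connectedness of $\sfT_{y,1}$, gives $\sfT_{y,1} \subseteq \sfT_{x,1}$. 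Thus $\sfT_{x,1} = \sfT_{y,1}$ for every $y \in F \cap U_x$. The subtorus $\sfT_{x',1}$ is therefore locally constant on $F$ as a function of $x'$, and by a standard connectedness argument (covering a path by normal-form neighborhoods) it is constant on each connected component of $F$. The main obstacle is merely to set up the normal-form bookkeeping carefully; once the stabilizer of $[t,v]$ is expressed as a weight condition on $v$, both the formula and its consequence for identity components fall out of Lemmas~\ref{technical lemma} and~\ref{chartF}.
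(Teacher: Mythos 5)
Your proof is correct and follows essentially the same route as the paper's: the stabilizer of $[t,v]$ is read off from the equivalence relation in the slice model, identified as $\{s\in\sfT_x \mid \alpha(s)=1 \text{ whenever } v_\alpha\neq 0\}$ via the weight decomposition, and the final claim follows because every contributing weight kills $\sfT_{x,1}$ (Lemma~\ref{technical lemma}) so that $x'\mapsto \sfT_{x',1}$ is locally constant on $F$. The only cosmetic difference is that the paper reduces to the case $v=v_\alpha$ of a single weight component, while you carry the full decomposition; both are fine.
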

\proof  It suffices to show this in the case when $v=v_{\alpha}\not=0$. If $r \in \sfT_y$, then $r\cdot [t,v]=[rt,v]=[t,v]$. Therefore $r \in \sfT_x$ and $\alpha(r)=1$, i.e. $r \in \ker \alpha$. If $r\in \ker \alpha \subset \sfT_x$, then $r\cdot[t,v]=[rt,v]=[t,\alpha(r)v]=[t,v]$, so $r \in \sfT_y$. This proves the first claim. 

Since $\alpha|_{\sfT_{x,1}}=0$, $\sfT_{x,1}$ must be contained in every $\sfT_y$, where $y$ is in the neighborhood $U_x$. For any $x,x' \in F$ such that $\exists y \in U_x \cap U_{x'} $, $\sfT_{x,1}$ and $\sfT_{x',1}$ are the connected component of $\sfT_y$ containg $1$, so they coincide. This proves the latter claim. \qed
\subsection{Orbifold 1-skeleton $[M_1/\sfS ]$}
The union of $1$-dimensional orbits in $[M/\sfS ]$ corresponds to the union of 
$(\dim \sfS  +1)$-dimensional orbits in $M$: let $M_1^{\circ}$ be the set of all points 
$x \in M$ such that $\sfT x$ is $(\dim \sfS  + 1)$-dimensional, so that $[M_1^{\circ}/\sfS ]$ 
is the union of $1$-dimensional orbits. The {\bf orbifold $1$-skeleton} is by definition 
$[(M_1^{\circ} \cup F) /\sfS ]$. In this section, we calculate $M_1^{\circ}$ explicitly 
using the normal form and
show that the closure $M_1$ of $M_1^{\circ}$ is $M_1^{\circ} \cup F$. 
We then demonstrate that the closure $N$ of a connected component 
$N^{\circ}$ of $M_1^{\circ}$ is a manifold and $[N/\sfS ]$ is a 
Hamiltonian $\sfR$-orbifold. Since $x\in M_1^{\circ}$ if and only if  $\ft/(\fs\oplus\ft_x)$ is 
one-dimensional, the $\sfT$-equivariant isomorphism $\Psi$ becomes $U_x \cong \sfT \times_{\sfT_x} \left( \Ann (\fs \oplus\ft_x) \oplus W\right)$ where $W:= \frac{(T_x(\sfT x))^{\perp_{\omega_x}}}{T_x(\sfT x)}$. 
We recall that $$W=\bigoplus_{\lambda \in \Hom(\sfT_{x,1},\Z)} \overline W_{\lambda}=\bigoplus_{\alpha \in \Hom(\sfT_x,\Z)} W_{\alpha}$$ 
are the weight decompositions with respect to the actions of $\sfT_{x,1}$ and $\sfT_x$ respectively.  Note that the notation $W_\alpha$ denotes a $\sfT_x$ representation, whereas $\overline W_\lambda$ denotes a representation of its identity component  $\sfT_{x,1}$.

\begin{lemma}
For $x \in M_1^{\circ}$,  the isomphism $\Psi$ induces 
\[
M_1^{\circ} \cap U_x \cong \sfT \times_{\sfT_x} \left( \Ann (\fraks \oplus\ft_x) \oplus \overline W_0\right).
\]
In particular, $[M_1^{\circ}/\sfS ]$ is a Hamiltonian $\sfR$-orbifold.
\end{lemma}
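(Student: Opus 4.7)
The plan is to mimic the proof of Lemma \ref{chartF}, but to replace the condition ``orbifold fixed point'' by the condition ``$\sfT$-orbit has dimension exactly $\dim\sfS+1$''. Via the local normal form of Corollary~\ref{cor:normal form}, a point $y \in U_x$ corresponds to a class $[t,(\xi^*,v)]$ in $\sfT\times_{\sfT_x}(\Ann(\fs\oplus\ft_x)\oplus W)$, and $y$ lies in $M_1^{\circ}$ iff $\dim\ft_y = \dim\ft_x$ (since the locally free $\sfS$-action forces $\fs\cap\ft_x = 0$, so $\dim \sfT y = \dim \ft - \dim \fs - \dim \ft_y$, and we compare with the known value at $x$).

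First I would compute $\sfT_y$ explicitly. An element $r\in\sfT$ fixes $[t,(\xi^*,v)]$ iff there is $t_x\in\sfT_x$ with $r t = t t_x^{-1}$ and $(t_x)_*(\xi^*,v) = (\xi^*,v)$. Because $\sfT_x$ acts trivially on $\Ann(\fs\oplus\ft_x)$ (cf.\ Lemma~\ref{basic linear algebra}), the constraint reduces to $(t_x)_*v = v$. Hence $\sfT_y = t\{t_x\in\sfT_x \mid (t_x)_*v=v\}t^{-1}$, and in particular $\sfT_y\subset\sfT_x$, so $\ft_y\subset\ft_x$. The condition $\ft_y = \ft_x$ then becomes $\sfT_{x,1}\subset\sfT_y$, i.e.\ that $v$ is fixed by the identity component $\sfT_{x,1}$. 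Using the weight decomposition $W = \bigoplus_{\lambda}\overline W_\lambda$ for the $\sfT_{x,1}$-action, this is exactly the condition $v\in\overline W_0$. This gives the claimed identification
\[
M_1^{\circ}\cap U_x \;\cong\; \sfT\times_{\sfT_x}\bigl(\Ann(\fs\oplus\ft_x)\oplus\overline W_0\bigr).
\]

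For the ``in particular'' clause, I would argue that $M_1^{\circ}$ is a smooth $\sfT$-invariant submanifold of $M$ from the local model just produced, and hence that $[M_1^{\circ}/\sfS]$ is an orbifold since $\sfS$ acts locally freely on $M$. To check it is a Hamiltonian $\sfR$-orbifold, I would verify that the restriction of $\omega$ induces a nondegenerate form on $T_xM_1^{\circ}/T_x(\sfS\cdot x)$. Under $\Psi$, this quotient becomes $\Ann(\fs\oplus\ft_x)\oplus\overline W_0$; the $\Ann(\fs\oplus\ft_x)$ summand pairs nondegenerately with $\ft/(\fs\oplus\ft_x)$ via $(d_x\mu)$ (giving the $\sfR/\sfR_{[x/\sfS_x]}$ directions), and $\overline W_0$ is the fixed-point set in the symplectic $\sfT_{x,1}$-representation $W$, which is automatically a symplectic subspace (its symplectic complement is the sum of the nontrivial weight spaces). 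Thus $\omega$ remains nondegenerate along $M_1^{\circ}$, and the restriction of $\mu$ provides the moment map.

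The main technical point—and the only real obstacle—is the final symplectic-subspace claim for $\overline W_0$; once that is in hand (a standard fact about torus actions on symplectic vector spaces), the rest is bookkeeping with the local normal form. Note that with essentially the same argument, one sees that the closure $M_1$ of $M_1^{\circ}$ coincides with $M_1^{\circ}\cup F$, since allowing $v$ to have nonzero weight components (while keeping $v\in\overline W_0$ in the limit) precisely interpolates between $M_1^{\circ}$ and $F$; this will be needed for the subsequent results in the section.
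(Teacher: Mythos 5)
Your argument is correct and follows essentially the same route as the paper: both reduce the claim, via the local normal form, to the observation that a vector $v\in W$ has discrete $\sfT_x$-orbit (equivalently, a stabilizer containing $\sfT_{x,1}$) exactly when $v\in\overline W_0$, and both justify the Hamiltonian clause by noting that $\overline W_0$, being the fixed subspace of the symplectic $\sfT_{x,1}$-representation $W$, is a symplectic subspace. The only slip is the intermediate formula $\dim \sfT y=\dim\ft-\dim\fs-\dim\ft_y$ (the correct count for the orbit in $M$ is $\dim\sfT y=\dim\ft-\dim\ft_y$, the $\dim\fs$ being subtracted only when passing to the orbit in the quotient); this does not affect the equivalence $y\in M_1^{\circ}\Leftrightarrow\ft_y=\ft_x$ that you actually use.
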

\proof If $v \in W_{\alpha}$ with $\alpha|_{\sfT_{x,1}}\not=0$, then $\sfT_x \cdot v$ is at least one dimensional. Therefore $\sfT\cdot [t,a,v]= \sfT \times_{\sfT_x} (\{a\} \oplus \sfT_x v)$ is at least $(\dim \sfS  + 2)$-dimensional. If $v \in W_{\alpha}$ with $\alpha|_{\sfT_{x,1}}=0$, then $\sfT\cdot [t,a,v] = \sfT \times_{\sfT_x} (a \oplus v) = \sfT \times_{\sfT_x} (\{a\} \oplus\alpha(\sfT_x)\cdot v)$ is exactly $(\dim \sfS  + 1)$-dimensional since $\alpha(\sfT_x)$ is discrete. The second claim follows from the fact that the symplectic structure of $W$ restricts to the symplectic structure on $\overline W_0$. \qed
\begin{lemma}\label{comp 1 sk}
Let $M_1$ to be the closure of $M_1^{\circ}$ in $M$. Then $M_1 = M_1^{\circ} \cup F$.
\end{lemma}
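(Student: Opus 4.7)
The plan is to prove both inclusions of $M_1 = M_1^{\circ} \cup F$, using lower semicontinuity of orbit dimension for one direction and the local normal form of Corollary~\ref{cor:normal form} for the other.

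\emph{First inclusion, $M_1 \subseteq M_1^{\circ} \cup F$.} Take $y \in M_1 \setminus M_1^{\circ}$ and choose a sequence $y_n \in M_1^{\circ}$ with $y_n \to y$. The dimension of the $\sfT$-orbit is lower semicontinuous: this follows directly from the Slice Theorem~\ref{thm:slice} applied at $y$, which displays every $\sfT$-orbit through a neighborhood of $y$ as at least $\dim(\sfT y)$-dimensional. Hence for large $n$,
\[
\dim(\sfT y) \;\le\; \dim(\sfT y_n) \;=\; \dim \sfS + 1.
\]
Since $\sfS$ acts locally freely on $M$, $\dim(\sfS y) = \dim \sfS$, and $\sfS y \subseteq \sfT y$ yields $\dim(\sfT y) \geq \dim \sfS$. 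Thus $\dim(\sfT y) \in \{\dim \sfS,\, \dim \sfS + 1\}$; the latter is excluded as $y \notin M_1^{\circ}$, and the former forces $\sfT y = \sfS y$ by connectedness of both orbits, so $y \in F$.

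\emph{Second inclusion, $M_1^{\circ} \cup F \subseteq M_1$.} The containment $M_1^{\circ} \subseteq M_1$ is trivial. For $F \subseteq M_1$, fix $x \in F$ and apply Corollary~\ref{cor:normal form}: we have $U_x \cong \sfT \times_{\sfT_x} W$ with $W = T_x M/T_x(\sfT x)$, and by Lemma~\ref{basic linear algebra} (applied at $x \in F$), $\ft = \fs \oplus \ft_x$. Writing the $\sfT_{x,1}$-weight decomposition $W = \bigoplus_{\lambda} \overline W_\lambda$, choose (see below) some weight $\lambda \neq 0$ and a nonzero $v \in \overline W_\lambda$, and set $y_n := \Psi^{-1}([1, v/n])$. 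Then $y_n \to x$, and a direct calculation in the slice shows that the $\sfT$-stabilizer of $y_n$ has Lie algebra $\ker d\lambda \subseteq \ft_x$ of codimension one in $\ft_x$; hence $\dim(\sfT y_n) = \dim \sfT - \dim \ft_x + 1 = \dim \sfS + 1$, placing $y_n \in M_1^{\circ}$.

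\emph{Main obstacle.} The crux is the existence of a nonzero $\sfT_{x,1}$-weight on $W$. Were every weight zero, $\sfT_{x,1}$ would fix $U_x$ pointwise; then the component of $M^{\sfT_{x,1}}$ containing $x$ would be a closed $(\dim M)$-dimensional submanifold containing the open set $U_x$, hence equal to $M$ by connectedness, so $\sfT_{x,1}$ would act trivially on all of $M$. Since $\dim \sfT_{x,1} = \dim \ft_x = \dim \frakr \geq 1$ whenever $\sfR$ is nontrivial, this contradicts the (implicit) effectiveness of the $\sfT$-action. Granting this standard hypothesis, both inclusions are established and the equality follows.
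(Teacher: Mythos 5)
Your proof is correct and follows essentially the same route as the paper's: lower semicontinuity of $\sfT$-orbit dimension (via the slice theorem) for $M_1 \subseteq M_1^{\circ}\cup F$, and producing points of $M_1^{\circ}$ arbitrarily close to $x\in F$ by scaling a vector in a nonzero-weight summand of the local normal form. Your final paragraph is a worthwhile addition: the paper's proof silently assumes that some nonzero $\sfT_{x,1}$-weight occurs in $W$ at each $x\in F$, and your effectiveness argument is exactly what is needed to rule out the degenerate case (e.g.\ $\sfR$ acting trivially, where $M_1^{\circ}=\emptyset$ but $F=M$).
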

\proof For each $x \in F$, every neighborhood of $x$ in $U_x$ contains a point in $M_1^{\circ}$, namely $[t,v]$ where $v \in W_{\alpha}$ where $\overline{\alpha}\not=0$. And so the orbit of $[t,v]$ is ($\dim \sfS $ +1)-dimensional. Indeed $\sfT\cdot [t,v]=\sfT \times_{\sfT_x} \alpha(\sfT_x) v$. Thus $F \subset M_1$.  If $x \in M_1\backslash(M_1^{\circ} \cup F)$, then the $\sfT$-orbit of $x$ is more than $(\dim \sfS  +1)$-dimensional. However, if $y \in M$ has an $m$-dimensional $\sfT$-orbit, then for every point in $U_y$, its $\sfT$-orbit is at least $m$-dimensional. This leads to a contradiction, and so we must have $M_1 \subset (M_1^{\circ} \cup F)$. \qed
\begin{remark}\label{normal form for 1 skeleton}
For $x \in F$, let $N$ be the closure of a connected component of $M_1^{\circ}$ such that $x \in N$, then  $N \cap U_x \cong \sfT \times_{\sfT_x} \left(\overline W_{0}\oplus \overline W_{\lambda}\right)$ for some $\alpha\not=0 \in \Hom(\sfT_{x,1},S^1)$. This implies that $[N/\sfS ]$ is a Hamiltonian $\sfR$-orbifold. 
\end{remark}
\section{The Morse-Bott property and Injectivity}\label{sec:MB}
We continue to use the notation from Section \ref{def 2.1}: $[M/\sfS ]$ is a Hamiltonian $\sfR$-orbifold with an $\sfR$-invariant moment map $[\mu]: [M/\sfS ] \to \frakr^*$. Let $\xi \in \ft$ be a rational element and let $\sfR_1 \subset \sfR$ be the image of $\sfT_1:=\overline{\{\exp(t\xi), t \in \R\}} \subset \sfT$ in $\sfR$. In this section, we show that $\mu^{\xi}: M \to \R$ is Morse-Bott for every rational $\xi \in \ft$ such that $\dim \sfR_1=n+1$. This naturally leads to the injectivity theorem for compact Hamiltonian $\sfR$-orbifolds.  
\begin{lemma}\label{holm-matsumura}
For any rational element $\xi$ in $\mathfrak{t}$, the map $\mu^{\xi}: M \to \R$ is a Morse-Bott function. 
\end{lemma}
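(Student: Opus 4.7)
The plan is to reduce the claim to the well-known local model of a Hamiltonian torus action on a symplectic vector space by applying the $\sfT$-equivariant local normal form, Corollary~\ref{cor:normal form}, at each critical point.

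First I identify the critical set. From the Hamiltonian condition $\omega(\xi_M, -) = d\mu^{\xi}$ and property (5) of Section~\ref{def 2.1} (namely $\ker \omega_x = T_x(\sfS \cdot x)$), a point $x \in M$ is critical for $\mu^{\xi}$ if and only if $\xi_M|_x \in T_x(\sfS \cdot x)$, equivalently $\xi \in \fraks + \ft_x$. At such a critical point I decompose $\xi = \eta + \zeta$ with $\eta \in \fraks$ and $\zeta \in \ft_x$; since $\mu$ takes values in $\frakr^* = \Ann \fraks$, we have $\mu^{\eta} \equiv 0$, and therefore $\mu^{\xi} = \mu^{\zeta}$ on all of $M$.

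Next I use the local normal form at $x$, which produces a $\sfT$-equivariant identification
\[
U_x \;\cong\; \sfT \times_{\sfT_x} \bigl( \Ann(\fraks \oplus \ft_x) \oplus W \bigr), \qquad W := (T_x(\sfT x))^{\perp_{\omega_x}}/T_x(\sfT x).
\]
The standard description of the moment map in this model yields
\[
\mu^{\zeta}\bigl([t, \alpha, w]\bigr) \;=\; \mu^{\zeta}(x) + \lan \alpha, \zeta\ran + \mu_W^{\zeta}(w),
\]
where $\mu_W^{\zeta}$ is the $\zeta$-component of the moment map of the $\sfT_x$-action on the symplectic slice $W$. Since $\alpha \in \Ann(\fraks \oplus \ft_x)$ and $\zeta \in \ft_x$, the linear term $\lan \alpha, \zeta\ran$ vanishes. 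Decomposing $W = \bigoplus_{\lambda} \overline{W}_{\lambda}$ into $\sfT_{x,1}$-weight spaces and choosing Hermitian coordinates adapted to this splitting, $\mu_W^{\zeta}$ becomes the quadratic form $\sum_{\lambda} \lambda(\zeta)\,|w_{\lambda}|^{2}$. From this local model both Morse--Bott conditions are immediate: the critical set satisfies
\[
\Crit(\mu^{\xi}) \cap U_x \;\cong\; \sfT \times_{\sfT_x} \Bigl( \Ann(\fraks \oplus \ft_x) \oplus \bigoplus_{\lambda(\zeta) = 0} \overline{W}_{\lambda} \Bigr),
\]
which is a smooth submanifold near each critical $\sfT$-orbit, and the Hessian in the normal directions $\bigoplus_{\lambda(\zeta) \neq 0} \overline{W}_{\lambda}$ is diagonal with non-zero eigenvalues $\lambda(\zeta)$, hence non-degenerate.

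The main obstacle I anticipate is justifying the displayed formula for $\mu^{\zeta}$ in this pre-symplectic setting, where $\omega$ is degenerate along $\sfS$-orbits and $\mu$ only lands in $\Ann \fraks$ rather than all of $\ft^{*}$. I would handle this by checking the Hamiltonian condition summand by summand in the local decomposition: the $\Ann(\fraks \oplus \ft_x)$-contribution accounts for the affine term, while the slice $W$, equipped with the non-degenerate induced form $\overline{\omega}_x$, carries a genuine $\sfT_{x,1}$-moment map of the familiar quadratic shape. Rationality of $\xi$ enters only through the standard fact that $\sfT_1 = \overline{\exp(\R \xi)}$ is a closed subtorus, which guarantees that $\{\lambda : \lambda(\zeta) = 0\}$ defines a well-posed linear splitting of $W$ and hence a smooth critical set.
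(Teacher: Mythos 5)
Your identification of the critical set ($d_x\mu^{\xi}=0$ iff $\xi_M|_x\in T_x(\sfS\cdot x)$ iff $\xi\in\fraks\oplus\ft_x$) and the observation that $\mu^{\xi}=\mu^{\zeta}$ globally because $\mu$ lands in $\Ann\fraks$ are both correct, and the first step agrees with the paper. From there, however, you take a genuinely different route: the paper never writes the moment map in local coordinates. Instead it proves smoothness of $\Crit(\mu^{\xi})$ by identifying it with the set $\{x\ |\ \xi_M|_x\in T_x(\sfS x)\}$ and invoking Lemma~\ref{chartF} (applied to the subtorus generated by $\xi$), and it proves non-degeneracy intrinsically, via the identity $[\overline{\xi_M},\overline{v}]|_x=\overline{\omega}^{-1}_x\bigl(H(\mu^{\xi})_x(v_x)\bigr)$, which shows $\ker H(\mu^{\xi})_x$ equals the tangent space to the critical set with no normal form required. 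Your approach is the classical Frankel-style local-model argument; it buys an explicit diagonalization of the Hessian, at the cost of needing much more structure in the chart.

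That cost is where the gap lies. The displayed formula $\mu^{\zeta}([t,\alpha,w])=\mu^{\zeta}(x)+\lan\alpha,\zeta\ran+\mu_W^{\zeta}(w)$ does not follow from Corollary~\ref{cor:normal form}: that corollary asserts only a $\sfT$-equivariant \emph{diffeomorphism}, with no statement that it carries $\omega$ to a model presymplectic form or $\mu$ to the model moment map. What you would need is a Marle--Guillemin--Sternberg-type normal form in the presymplectic setting (equivariant constant-rank Darboux plus the moment-map identification), which is true but is not established in this paper and is not repaired by ``checking the Hamiltonian condition summand by summand'' --- that only verifies that the \emph{model} admits a moment map of that shape, not that $\mu^{\zeta}\circ\Psi^{-1}$ equals it beyond first order at $x$. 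Two honest fixes: either invoke the full local normal form from \cite{LT} explicitly, or retreat to the linearization at $x$. Since $\zeta\in\ft_x$, the vector field $\zeta_M$ vanishes at $x$, so $H(\mu^{\zeta})_x(v,w)=\omega_x(A_{\zeta}v,w)$ where $A_{\zeta}$ is the linearized action on $T_xM$; this determines the $2$-jet of $\mu^{\zeta}$ at $x$ and yields your eigenvalue computation without any chart formula. But the $2$-jet alone does not identify $\Crit(\mu^{\xi})$ in a whole neighborhood, so for smoothness of the critical set you should still fall back on Lemma~\ref{chartF}, as the paper does.
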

\proof In order to show that $\Crit(\mu^{\xi})$ is a submanifold of $M$, by Lemma \ref{chartF}, it 
suffices to prove that $\Crit(\mu^{\xi})$ coincides with the submanifold which yields the $\sfR_1$-
fixed suborbifold in $[M/\sfS]$. That is, following Remark~\ref{rem 3.3}, we must show that 
\[
Q:=\Crit(\mu^{\xi}) = \{ x \in M \ |\ \xi_M|_x \in T_x(\sfS x)\}.
\] 
By definition, $\Crit(\mu^{\xi}) = \{ x \in M \ |\ d_x\mu^{\xi} =0 \}$. The equation $d_x\mu^{\xi}=0$ implies that $[\xi_M|_x]=\overline{\omega_x}^{-1}(d_x \mu^{\xi})=0$ in $\frac{T_xM}{T_x(\sfS x)}$.  Therefore the claim follows.  

Now we turn to non-degneracy.  The function $\mu^\xi$ is non-degenerate if its Hessian satisfies
\[
\ker H(\mu^{\xi})_x = T_xQ.
\] 
The Hessian is evaluated at a tangent vector $v_x \in T_xQ$ by
\begin{eqnarray*}
H(\mu^{\xi})_x(v_x)  \ =\  d_x(\calL_v\mu^{\xi}) \ = \ \calL_v(d\mu^{\xi})|_x \ = \ 0,
 \end{eqnarray*} 
since $d\mu^{\xi}|_Q=0$ and $v_x \in T_xQ$.  Thus $\ker H(\mu^{\xi})_x \supset T_xQ$. On the other hand, we may identify
\begin{eqnarray*}
T_xQ & = & \{ v_x \in T_xM \ |\ [\xi_M, v]|_x \in T_x(\sfS x)\}\\
 & = & \{ v_x \in T_xM \ |\ [\overline{\xi_M}, \overline{v}]|_x=0\},
 \end{eqnarray*}
 where $\overline{\xi_M}$ and $\overline{v}$ are the local vector fields induced on 
 $\frac{T_xM}{T_x(\sfS x)}$. Since $\overline{\xi_M}=\overline{\omega}^{-1}(d\mu^{\xi})$, 
 we have
 \begin{eqnarray*}
 [\overline{\xi_M}, \overline{v}]|_x & = &  
 (\calL_{\overline{v}}\overline{\omega}^{-1})|_x(d_x\mu^{\xi}) +
  \overline{\omega}^{-1}_x(\calL_{\overline{v}}(d\mu^{\xi})|_x)\\
  & = &  \overline{\omega}^{-1}_x(H(\mu^{\xi})_x(v_x)).
  \end{eqnarray*} 
  Therefore if $v_x \in \ker H(\mu^{\xi})_x$, then $[\overline{\xi_M}, \overline{v}]|_x=0$, i.e. $\ker H(\mu^{\xi})_x \subset T_xQ$. 
 \qed
\begin{remark}  
In particular, $[\mu^{\xi}]: [M/\sfS ] \to \R$ is an orbifold Morse-Bott function. By this, we mean that $\Crit([\mu^{\xi}])=[M/\sfS ]^{\sfR_1}$ is an orbifold,  and $[\mu^{\xi}]$ is non-degenerate.
\end{remark}

\begin{remark}
A further discussion of the local normal form for symplectic toric orbifolds may be found in 
\cite[Section 2]{GHH}. The ideas developed there motivated this project.
In the case when $M$ is the level set for an $\sfS$-moment map on a Hamiltonian $\sfT$-space $X$, 
our Lemma~\ref{holm-matsumura} is exactly \cite[Lemma 2.2]{GHH}.    The remarks in the footnote 
following the local normal form theorem in  \cite[Section 2]{GHH} also apply here.   
\end{remark}
The following two lemmata are well-known.
\begin{lemma}[Atiyah-Bott over $\Q$, c.f.\ Lemma 7.1 \cite{TW2}]\label{atiyah-bott}
Let $E \to F$ be a $\sfT$-equivariant complex vector bundle over a connected $\sfT$-space $F$. Let $D$ and $S$ be the $\sfT$-equivariant disk and sphere bundle corresponding $E$ with a choice of a $\sfT$-invariant metric. Assume that 
\begin{itemize}
\item[$(\Q 1)$] there is a subtorus $\sfT_1 \subset \sfT$ that fixes $F$ pointwise, and acts non-trivially on the fibers. 
\end{itemize}
Then the $\sfT$-equivariant Euler class $\be_{\sfT}(E,\Q)$ is a non-zero divsor. In particular, we have the short exact sequence over $\Q$:
\[
\xymatrix{
0 \ar[r] & H^i_{\sfT}(D,S; \Q)\ar[d]_{\cong_{Thom}}  \ar[r] & H^i_{\sfT}(D; \Q)\ar[d]_{\cong_{Homot}} \ar[r] & H_{\sfT}^i(S; \Q) \ar[r] & 0\\
& H_{\sfT}^{i-\lambda}(F; \Q) \ar[r]_{\cup \be_{\sfT}(E)}& H^i_{\sfT}(F; \Q) &
}
\]
The same claim holds over $\Z$ under a stronger assumption on the action of $\sfT_1$:
\begin{itemize}
\item[$(\Z 1)$] there is a subtorus $\sfT_1 \subset \sfT$ that fixes $F$ pointwise, and each weight $$\lambda \in \Hom(\sfT_1,S^1) \cong \Z^r$$ is primitive whenever $W_\lambda\neq\emptyset$. 
\end{itemize}
\end{lemma}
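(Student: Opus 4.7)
The plan is to use the standard weight-decomposition argument for the Atiyah-Bott lemma, handling the rational and integral cases in parallel.

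First, since $\sfT_1 \subset \sfT$ fixes $F$ pointwise, the bundle decomposes into $\sfT$-invariant weight subbundles
\[
E = \bigoplus_{\lambda \in \Hom(\sfT_1, S^1)} E_\lambda,
\]
and by hypothesis only nonzero weights occur. On the cohomology side, since $\sfT_1$ acts trivially on $F$, the Borel fibration splits and the K\"unneth formula (valid over $\Z$ because $H^*(B\sfT_1;\Z)$ is a polynomial ring, hence torsion-free) gives
\[
H_{\sfT}^*(F) \cong H_{\sfT/\sfT_1}^*(F) \otimes H^*(B\sfT_1) = H_{\sfT/\sfT_1}^*(F)[t_1, \ldots, t_r],
\]
where the $t_i$ form a basis of $\Hom(\sfT_1, S^1)$, integrally in the $\Z$-case and rationally in the $\Q$-case.

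Next, by multiplicativity of Euler classes I have $\be_{\sfT}(E) = \prod_\lambda \be_{\sfT}(E_\lambda)$, so it suffices to show each factor is a non-zero divisor. The equivariant splitting principle expresses
\[
\be_{\sfT}(E_\lambda) = \prod_{i=1}^{r_\lambda}(\lambda + y_i),
\]
where $y_i$ are the $\sfT/\sfT_1$-equivariant Chern roots of $E_\lambda$ and $\lambda$ is viewed as a degree-two element of $H^*(B\sfT_1)$. Expanded, this is a polynomial in $\lambda$ with leading coefficient $1$ and remaining coefficients in $H_{\sfT/\sfT_1}^*(F)$. Over $\Q$ any nonzero $\lambda$ extends to a basis of $\Hom(\sfT_1, S^1)_\Q$, and over $\Z$ the primitivity hypothesis is exactly what allows $\lambda$ to be extended to an integral basis of $\Hom(\sfT_1, S^1) \cong \Z^r$. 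Once $\lambda$ is a coordinate variable, $\be_{\sfT}(E_\lambda)$ is a monic polynomial in $\lambda$, and a standard leading-term argument shows that multiplication by such an element is injective.

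Finally, the short exact sequence follows from the long exact sequence of the pair $(D,S)$ in $\sfT$-equivariant cohomology. The $\sfT$-equivariant Thom isomorphism identifies $H_{\sfT}^*(D,S)$ with a degree-shifted copy of $H_{\sfT}^*(F)$, the homotopy equivalence $D \simeq F$ identifies $H_{\sfT}^*(D)$ with $H_{\sfT}^*(F)$, and under these identifications the connecting map becomes cup product with $\be_{\sfT}(E)$. Injectivity of this cup product, just established, splits the long exact sequence into the claimed short exact pieces. The main obstacle is the $\Z$-coefficient case: one must justify that the K\"unneth splitting holds integrally and that a primitive weight may be completed to an integral basis. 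Both steps are classical --- the first from torsion-freeness of $H^*(B\sfT_1;\Z)$, the second from the fact that a primitive vector in $\Z^r$ is part of a $\Z$-basis --- but they are precisely the reason the weaker rational hypothesis does not suffice over $\Z$.
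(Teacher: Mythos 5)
The paper states this lemma without proof, labeling it ``well-known'' and citing Lemma~7.1 of \cite{TW2}, so there is no in-paper argument to compare against; your proof is the standard weight-decomposition argument and is correct. One small caveat: the hypothesis $(\Q 1)$, ``acts non-trivially on the fibers,'' must be read as ``the fibers contain no nonzero $\sfT_1$-fixed vectors'' (i.e.\ the weight-zero summand $E_0$ is absent), which you silently assume when you assert that only nonzero weights occur --- this is the intended reading and is what holds for the negative normal bundles in the paper's application, but with the literal reading the Euler class of a nonzero $E_0$ summand could be a zero divisor. Your identification of primitivity as exactly what allows $\lambda$ to be completed to a $\Z$-basis, so that $\be_{\sfT}(E_\lambda)$ becomes monic in an integral polynomial generator, is the right mechanism and is consistent with the paper's Example~\ref{ex:torsion}, where the imprimitive weight yields the Euler class $2x$, which annihilates $2$-torsion.
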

Here recall that an element $\lambda$ of a $\Z$-module $R$ is \emph{primitive} if and only if $\lambda=a\lambda'$ for $a \in \Z$ and $m' \in R$ implies $a=\pm 1$.  

\begin{example}\label{ex:torsion}
If axiom $(\Z 1)$ is not satisfied, then the Euler class $e_{\sfT}(E,\Z)$ may be a torsion class in equivariant 
cohomology with integer coefficients.
Let $F$ be the Klein bottle, or any other space with $2$-torsion in its integral cohomology ring.  
Consider the trivial bundle $E=\C\times F\to F$, equipped with 
the circle action $t\cdot (z,f) = (t^2\cdot z,f)$, for $z\in \C$ and $f\in F$, which fixes the base pointwise and spins 
each fiber at double speed.  Then $H_{S^1}^*(F;\Z) = H^*(F;\Z)\otimes \Z[x]$ and under this identification we have
$e_{\sfT}(E,\Z)=1\otimes 2x$.
Let $\alpha\in H^*(F;\Z)$ be a $2$-torsion class (i.e.\ $2\alpha=0$),  then $\alpha\otimes 1$ is an 
equivariant class, and 
\begin{eqnarray*}
\alpha\otimes 1\cup e_{\sfT}(E,\Z) & = & \alpha\otimes 1\cup 1\otimes 2x\\
& = & \alpha \otimes 2x \\
& = & 2\alpha\otimes x\\
& = & 0.
\end{eqnarray*}
Thus,  $e_{\sfT}(E,\Z)$ is a zero divisor.
\end{example}

\begin{lemma}[c.f.\ Lemma 4.4. \cite{LT}]\label{morse-bott}
Let $f: M\to \R$ be a $\sfT$-invariant Morse-Bott function on a compact manifold $M$ equipped with an action of $\sfT$. 
Choose $[a,b] \subset \R$ which contains a unique critical value $c$. Let $F$ be the critical submanifold such that 
$f(F)=c$. Let $E^-$ be its $\sfT$-equivariant negative normal bundle and $D$ and $S$ be the corresponding 
$\sfT$-equivariant disk and sphere bundles. Then $(M_b^-,M_a^-)$ can be retracted onto the pair $(D,S)$ so 
that we have
\[
\cong_{MB}:H^*_{\sfT}(M_b^-,M_a^-;\Z) \cong H^*_{\sfT}(D,S;\Z)
\]
where $M_a^-:=f^{-1}(-\infty,a)$.
\end{lemma}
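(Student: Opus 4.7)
The plan is to upgrade the classical Morse-Bott retraction argument to the equivariant setting. First I would fix a $\sfT$-invariant Riemannian metric on $M$, obtained by averaging any metric against the compact torus $\sfT$. With respect to this metric the gradient vector field $\grad f$ is $\sfT$-invariant, so its negative gradient flow $\varphi_t$ is a $\sfT$-equivariant one-parameter family of diffeomorphisms of $M$. Since $F$ is $\sfT$-invariant (because $f$ is), I would then apply the $\sfT$-equivariant tubular neighborhood theorem to obtain a $\sfT$-equivariant diffeomorphism between an open neighborhood $\cU$ of $F$ in $M$ and the total space of the normal bundle $\nu(F\subset M)$, identifying $F$ with the zero section.

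Next I would invoke an equivariant Morse-Bott lemma: on a (possibly smaller) $\sfT$-invariant neighborhood of $F$, there is a $\sfT$-equivariant diffeomorphism under which $f$ takes the quadratic normal form $f=c+\|v^+\|^2-\|v^-\|^2$, where $v^{\pm}$ are the fiber coordinates in a $\sfT$-invariant orthogonal splitting $\nu(F\subset M)=E^+\oplus E^-$ of the normal bundle into the positive and negative eigenspaces of the fiberwise Hessian. Such a splitting and normal form exist because the Hessian is $\sfT$-invariant and the standard constructions (eigenspace decomposition, Moser trick) can all be carried out equivariantly by averaging over $\sfT$.

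With this normal form in place, I would use the negative gradient flow to deformation-retract $M_b^-$ onto $M_a^-\cup D$, where $D$ is the closed disk bundle of $E^-$ of sufficiently small radius, in such a way that the sphere bundle $S$ is mapped into $M_a^-$. Because the flow, the tubular neighborhood, and the normal form are all $\sfT$-equivariant, this retraction is $\sfT$-equivariant. Combining it with excision then yields
\[
H^*_{\sfT}(M_b^-,M_a^-;\Z)\;\cong\;H^*_{\sfT}(M_a^-\cup D,\,M_a^-;\Z)\;\cong\;H^*_{\sfT}(D,S;\Z),
\]
which is the desired isomorphism $\cong_{MB}$.

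The main obstacle is the equivariant Morse-Bott normal form: one needs a $\sfT$-equivariant change of coordinates on a neighborhood of $F$ that conjugates $f$ to its fiberwise quadratic model. This is where compactness of $\sfT$ is essential, since one invokes the equivariant Moser trick (or equivalently Palais's equivariant straightening) to average away the higher-order terms in $f-c$ in a $\sfT$-invariant fashion. Once this normal form is secured, the remainder of the argument is just the classical Morse-Bott retraction run flow-by-flow, with each step manifestly $\sfT$-equivariant.
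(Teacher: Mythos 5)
The paper does not prove this lemma at all: it is stated as ``well-known'' and attributed to \cite{LT}, Lemma 4.4, so there is no in-paper argument to compare against. Your proof is the standard equivariant Morse--Bott retraction (average the metric over the compact torus $\sfT$ to get an equivariant gradient flow, use the equivariant tubular neighborhood and Morse--Bott normal form, retract $M_b^-$ onto $M_a^-\cup D$ attaching $D$ along $S$, and finish by excision), which is correct and is essentially the argument the cited reference supplies.
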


By Lemma \ref{holm-matsumura},  the following generalization of \cite[Proposition 2.1]{TW}  obviously follows from Lemmata \ref{atiyah-bott} and \ref{morse-bott}.
\begin{prop}\label{s.e.s}
Let $c \in \R$ be a critical value for $\mu^{\xi}$ and let $F_c$ be the set of critical points contained in $(\mu^{\xi})^{-1}(c)$. Assume that 
\begin{itemize}
\item[$(\Q 2)$] for each connected component $F_c'$ of $F_c$, there is a subtorus of $\sfT$, that fixes $F_c'$ pointwise and acts non-trivially on the negative normal bundle $E_c^-|_{F_c'}$. 
\end{itemize}
Let $\epsilon\geq 0$ such that $c$ is the only critical point in $[a,b]:=[c-\epsilon, c+\epsilon]$. Then we have the short exact sequence over $\Q$:
\[
\xymatrix{
0 \ar[r] & H^i_{\sfT}(M_b^-,M_a^-;\Q) \ar[r]\ar[d]_{\cong_{MB}} & H^i_{\sfT}(M_b^-;\Q) \ar[r] \ar[d] & H^i_{\sfT}(M_a^-;\Q) \ar[r] & 0 \\
& H^i_{\sfT}(D_c,S_c;\Q)\ar[d]_{\cong_{Thom}}  \ar[r] & H^i_{\sfT}(D_c;\Q)\ar[d]_{\cong_{Homot}}  &&\\
& H_{\sfT}^{i-\lambda}(F_c;\Q) \ar[r]_{\cup \be_{\sfT}(E_c^-)}& H^i_{\sfT}(F_c;\Q) &
}
\]
where $E_c^-$ is the negative normal bundle for $F_c$ and $D_c$ and $S_c$ are the corresponding $\sfT$-equivariant disk and sphere bundles.

The claim also holds over $\Z$ if we assume
\begin{itemize}
\item[$(\Z 2)$] for each connected component $F_c'$ of $F_c$, there is a subtorus of $\sfT$ that fixes $F_c'$ pointwise and acts on the negative normal bundle $E_c^-|_{F_c'}$ in such a way that each weight $\lambda \in \Hom(\sfT_1,S^1) \cong \Z^r$ is primitive ($r\geq 1$).  
\end{itemize}
\end{prop}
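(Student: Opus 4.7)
The plan is to assemble the short exact sequence from three standard ingredients: the long exact sequence of the pair $(M_b^-, M_a^-)$ in $\sfT$-equivariant cohomology, the Morse-Bott identification from Lemma~\ref{morse-bott}, and the Thom isomorphism, with the final injectivity of the connecting map supplied by the Atiyah-Bott Lemma~\ref{atiyah-bott}. Since Lemma~\ref{holm-matsumura} already ensures that $\mu^{\xi}$ is $\sfT$-invariant and Morse-Bott, the hypotheses of Lemma~\ref{morse-bott} are met; thus $(M_b^-, M_a^-)$ retracts equivariantly onto the pair $(D_c, S_c)$ associated with the negative normal bundle $E_c^-$ of $F_c$, and the Thom isomorphism identifies $H^i_{\sfT}(D_c, S_c; \Q) \cong H^{i-\lambda}_{\sfT}(F_c; \Q)$, implementing the leftmost two vertical arrows of the diagram.

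The crux is therefore showing that the horizontal restriction $H^i_{\sfT}(M_b^-, M_a^-) \to H^i_{\sfT}(M_b^-)$ is injective, or equivalently that cup product with $\be_{\sfT}(E_c^-)$ is injective on $H^*_{\sfT}(F_c)$. First I would decompose $F_c = \bigsqcup_{\alpha} F_c^{\alpha}$ into its finitely many connected components, so that $H^*_{\sfT}(F_c) = \bigoplus_{\alpha} H^*_{\sfT}(F_c^{\alpha})$ and it suffices to prove injectivity on each summand. Hypothesis $(\Q 2)$ is designed precisely so that each component $F_c^{\alpha}$ admits a subtorus $\sfT_1^{\alpha} \subset \sfT$ fixing $F_c^{\alpha}$ pointwise and acting nontrivially on the fibers of $E_c^-|_{F_c^{\alpha}}$; this is exactly condition $(\Q 1)$ of Lemma~\ref{atiyah-bott} applied to the bundle $E_c^-|_{F_c^{\alpha}} \to F_c^{\alpha}$. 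Therefore $\be_{\sfT}(E_c^-|_{F_c^{\alpha}})$ is a non-zero divisor in $H^*_{\sfT}(F_c^{\alpha}; \Q)$, cup product by it is injective, and the short exact sequence over $\Q$ follows by chasing this injectivity through the Thom and Morse-Bott isomorphisms into the long exact sequence of the pair.

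For the statement over $\Z$, the same diagram chase applies verbatim, with the only modification being that the input to the Atiyah-Bott lemma is the stronger primitivity hypothesis $(\Z 1)$, which is exactly what $(\Z 2)$ delivers on each connected component of $F_c$. The example preceding the lemma (Example~\ref{ex:torsion}) explains why the primitivity assumption cannot be dropped: without it, the equivariant Euler class over $\Z$ can be a zero divisor.

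The main obstacle is essentially bookkeeping rather than a serious geometric issue: one must verify that the Morse-Bott retraction, the Thom class, and the Atiyah-Bott injectivity can all be assembled $\sfT$-equivariantly over $\Z$ on each component of $F_c$ and then summed. Because $F_c$ has finitely many components and the three ingredients are standard in $\sfT$-equivariant topology, this step is routine once $(\Z 2)$ (respectively $(\Q 2)$) is assumed. No new feature of the orbifold setting enters the argument: the orbifold hypothesis only matters upstream, through Lemma~\ref{holm-matsumura}, to guarantee that $\mu^{\xi}$ is Morse-Bott on $M$ in the first place.
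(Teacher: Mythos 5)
Your proposal is correct and follows the paper's own route exactly: the paper states that the proposition ``obviously follows'' from Lemmata~\ref{atiyah-bott} and \ref{morse-bott} together with Lemma~\ref{holm-matsumura}, which is precisely the assembly you carry out (Morse--Bott retraction onto $(D_c,S_c)$, Thom isomorphism, and the non-zero-divisor property of $\be_{\sfT}(E_c^-)$ applied componentwise to split the long exact sequence of the pair). Your write-up simply supplies the details the paper leaves implicit.
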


\begin{remark}
Note that the hypothesis $(\Q 2)$ implies $(\Q 1)$, and $(\Z 2)$ implies $(\Z 1)$.
\end{remark}

\begin{remark}
We can always choose a rational element $\xi$ in $\mathfrak{t}$ such that 
\[
\{ x \in M \ |\ \xi_M|_x \in T_x(\sfS x)\} = \{ x \in M \ |\ \xi'_M|_x \in T_x(\sfS x), \forall \xi' \in \ft\},
\] 
or equivalently, $[M/\sfS ]^{\sfR_1} =[M/\sfS ]^{\sfR}.$
\end{remark}
\begin{theorem}\label{thm:injectivity}
Let $F:=\{ x \in M \ |\ \sfT x= \sfS x\}$ so that $[M/\sfS ]^{\sfR} = [F/\sfS ]$, and let $i: F \inc M$ be the inclusion map. 
Then the induced map $i^*: H^*_{\sfT}(M,\Q) \inc H^*_{\sfT}(F,\Q)$ is injective.
Equivalently, the induced map $i^*: H^*_{\sfR}([M/\sfS ],\Q) \inc H^*_{\sfR}([F/\sfS ],\Q)$ is injective.

Moreover, if
\begin{itemize}
\item[$(\Z 3)$] for each connected component $F'$ of $F$, each weight of the action of $\sfT_{F'}$ on the
normal bundle $\nu(F'\subset M)$ is primitive
\end{itemize}
then $i^*: H^*_{\sfT}(M,\Z) \inc H^*_{\sfT}(F,\Z)$ is also injective 
\end{theorem}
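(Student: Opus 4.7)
The plan is to prove injectivity by Morse theory on $M$ with respect to a carefully chosen component $\mu^{\xi}$ of the moment map. By the remark immediately preceding the statement, I can select a rational $\xi \in \ft$ such that $\Crit(\mu^\xi) = F$. Lemma \ref{holm-matsumura} then ensures that $\mu^\xi$ is a $\sfT$-invariant Morse-Bott function, and compactness of $M$ yields a finite list of critical values $c_0 < c_1 < \cdots < c_N$, with $F$ the disjoint union of the critical manifolds $F_k := F \cap (\mu^\xi)^{-1}(c_k)$. Writing $M_a^- := (\mu^\xi)^{-1}(-\infty, a)$, I will induct on $k$ to show that
\[
i_k^*: H^*_{\sfT}(M_{c_k+\epsilon}^-; \Q) \to H^*_{\sfT}(F \cap M_{c_k+\epsilon}^-; \Q)
\]
is injective; at $k = N$ this is the desired conclusion, since $M_{c_N+\epsilon}^- = M$.

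The base case $k=0$ is immediate: $M_{c_0+\epsilon}^-$ deformation retracts $\sfT$-equivariantly onto $F_0$, so $i_0^*$ is an isomorphism. For the inductive step, I would invoke Proposition \ref{s.e.s} at the critical value $c_k$ to obtain a short exact sequence
\[
0 \to H^{*-\lambda}_{\sfT}(F_k;\Q) \xrightarrow{\,\phi\,} H^*_{\sfT}(M_{c_k+\epsilon}^-;\Q) \to H^*_{\sfT}(M_{c_k-\epsilon}^-;\Q) \to 0,
\]
in which the composition of $\phi$ with restriction to $F_k$ is cup product with $\be_{\sfT}(E_k^-)$. Given $\alpha \in H^*_{\sfT}(M_{c_k+\epsilon}^-;\Q)$ restricting to zero on $F \cap M_{c_k+\epsilon}^-$, its restriction to $F \cap M_{c_k-\epsilon}^-$ also vanishes, so by induction its image in $H^*_{\sfT}(M_{c_k-\epsilon}^-;\Q)$ is zero. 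Hence $\alpha = \phi(\beta)$ for some $\beta$, and then $\beta \cup \be_{\sfT}(E_k^-) = i_k^*(\alpha)|_{F_k} = 0$, forcing $\beta = 0$ because $\be_{\sfT}(E_k^-)$ is a non-zero divisor by Lemma \ref{atiyah-bott}.

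To apply Proposition \ref{s.e.s}, I must verify its hypothesis $(\Q 2)$: for each connected component $F_k'$ of $F_k$, I need a subtorus of $\sfT$ that fixes $F_k'$ pointwise and acts non-trivially on $E_k^-|_{F_k'}$. The natural candidate is $\sfT_{F_k',1}$, the common identity component of the isotropy group along $F_k'$, which is well-defined by Lemma \ref{global isotropy}. By the local normal form (Corollary \ref{cor:normal form}) together with Lemma \ref{chartF}, the tangent directions transverse to $F$ decompose into \emph{non-zero} $\sfT_{F_k',1}$-weight spaces, so $\sfT_{F_k',1}$ acts non-trivially on the entire normal bundle $\nu(F_k' \subset M)$. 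Since $E_k^-$ is a non-trivial $\sfT$-invariant sub-bundle of this normal bundle (for $k \geq 1$), the action of $\sfT_{F_k',1}$ on $E_k^-|_{F_k'}$ is also non-trivial, verifying $(\Q 2)$ and closing the induction.

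For the integer coefficient statement, hypothesis $(\Z 3)$ furnishes primitivity of each weight of $\sfT_{F_k'}$ on $\nu(F_k' \subset M)$; passing along $\sfT_{F_k',1} \into \sfT_{F_k'}$ and restricting to the sub-bundle $E_k^-$ yields the primitivity of weights of the subtorus $\sfT_{F_k',1}$ on $E_k^-|_{F_k'}$ demanded by hypothesis $(\Z 2)$ of Proposition \ref{s.e.s}. The same inductive argument, now using the integral short exact sequence and the integral Atiyah-Bott lemma, delivers injectivity over $\Z$. The hardest part I expect will be precisely this transfer: confirming that primitivity on the full normal bundle together with the (possibly non-connected) stabilizer structure of $\sfT_{F_k'}$ translates cleanly into primitivity with respect to the identity component $\sfT_{F_k',1}$ on the negative sub-bundle $E_k^-$, so that the Euler class remains a non-zero divisor integrally.
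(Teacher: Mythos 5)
Your proposal is correct and follows essentially the same route as the paper: a Morse-theoretic induction over the critical values of $\mu^{\xi}$, with $(\Q 2)$ verified via the common identity component $\sfT_{F',1}$ of the isotropy from Lemma \ref{global isotropy}, Proposition \ref{s.e.s} supplying the short exact sequence, and Lemma \ref{atiyah-bott} making the Euler class a non-zero divisor (with the $(\Z 3)\Rightarrow(\Z 2)$ transfer for integer coefficients). The only cosmetic difference is that you run an explicit diagram chase where the paper packages the same step as an application of the Five Lemma, and you start the induction at the minimum level set rather than at the empty sublevel set.
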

Although the proof is analogous to the proof given in \cite{TW}, we include the complete proof of 
injectivity here  so that we may observe that the proof works in this orbifold set-up, and 
under certain conditions, it holds with $\Z$-coefficients.

\proof For each connected component $F'$ of $F$, the group $\sfT_{F'}:=\sfT_{x,1}$, for some $x \in F'$, is the maximal global isotropy subtorus of $\sfT$ for $F'$ by  Lemma \ref{global isotropy}. Each weight of the action of $\sfT_{F'}$ on the negative normal bundle $E^-|_{F'}$ is non-trivial, which implies the condition $(\Q 2)$ in Proposition \ref{s.e.s}.  Therefore all lemmata and proposition in this section hold for our setup. 

Let $\xi$ is a generic element in $\ft$ and $c_1 < \cdots < c_n$ the critical values for $\mu^{\xi}$. Choose a small $\epsilon > 0$ such that $c_i$ is the only critical value in $[a_i,b_i]:=[c_i-\epsilon, c_i+\epsilon], \ \forall i$.  \\Let $M_{a_i}^-:=(\mu^{\xi})^{-1}(-\infty,a_i)$, $M_{b_i}^-:=(\mu^{\xi})^{-1}(-\infty,b_i)$, and $F_{c_i}:=F \cap (\mu^{\xi})^{-1}(c_i)$. We will prove that the restriction map $H_{\sfT}^*(M_{b_i}^-) \to H_{\sfT}^*(F \cap M_{b_i}^-)$ is injective for all $i$ and then the theorem follows from the case $i=n$, i.e. $M_{b_n}^-=M$.
We have the following commutative diagram with the exact horizontal rows:
{\tiny
\begin{eqnarray}\label{injectivity diagram}
\begin{array}{c}
\xymatrix{
0 \ar[r] & H^*_{\sfT}(M_{b_i}^-,M_{a_i}^-)\ar[d]_{\gamma_i} \ar[r] & H^*_{\sfT}(M_{b_i}^-) \ar[r] \ar[d]& H^*_{\sfT}(M_{a_i}^-) \ar[d]_{\beta_i}\ar[r] & 0\\
0\ar[r] & H^*_{\sfT}(F \cap M_{b_i}^-,F \cap M_{a_i}^-) \ar@{=}[d]\ar[r] & H^*_{\sfT}(F \cap M_{b_i}^-)\ar@{=}[d]\ar[r]&H^*_{\sfT}(F\cap M_{a_i}^-)\ar[r]\ar@{=}[d]& 0 \\
&H^*_{\sfT}(F_{c_i})&\bigoplus_{l=1}^{i} H^*_{\sfT}(F_{c_i})& \bigoplus_{l=1}^{i-1} H^*_{\sfT}(F_{c_i})&
}\end{array}
\end{eqnarray}
}
The top sequence is exact by Proposition~\ref{s.e.s}, and the second is clearly exact.  The vertical maps are restriction maps and we prove that they are injective by induction. The base case of the induction is trivial since $M_{a_1}^-$ is empty.  Now, the last column can be identified with  $H^*_{\sfT}(M_{b_{i-1}}^-) \to H^*_{\sfT}(F\cap M_{b_{i-1}}^-)$ by using  $H^*_{\sfT}(M_{b_{i-1}}^-) \cong_{homot} H^*_{\sfT}(M_{a_i}^-).$ The map  $\gamma_i$ is injective because it is the map in the Proposition~\ref{s.e.s},
\begin{eqnarray}\label{next diagram}
\begin{array}{c}
\xymatrix{
& H^i_{\sfT}(M_b^-,M_a^-) \ar[r]\ar[d]_{\cong_{MB}} \ar[ddr]_{\gamma}& H^i_{\sfT}(M_b^-) \ar[d] && \\
& H^i_{\sfT}(D_c,S_c)\ar[d]_{\cong_{Thom}}   & H^i_{\sfT}(D_c)\ar[d]^{\cong_{Homot}}  &&\\
& H_{\sfT}^{i-\lambda}(F_c) \ar[r]_{\cup \be_{\sfT}(E_c^-)}& H^i_{\sfT}(F_c) &
}
\end{array} .
\end{eqnarray}
The map $\beta_i$ in Diagram (\ref{injectivity diagram}) is injective by the induction hypothesis. 
Hence, by the Five Lemma, the middle map is injective.  

If $(\Z 3)$ is satisfied, then Proposition~\ref{s.e.s} holds with $\Z$ coefficients, and the above argument
carries through with $\Z$ coefficients. This completes the proof. \qed

\begin{remark}\label{injectivity over Z}
We will show that the toric orbifolds with the maximal torus $\sfR$-action satisfies the condition $(\Z 3)$, and so
Theorem~\ref{thm:injectivity} holds over $\Z$.  Following Example~\ref{ex:torsion}, if we let $F$ be any manifold
with $2$-torsion in its integral cohomology, and let $S^1$ act on $F\times \C P^1$ by fixing $F$ pointwise and
rotating $\C P^1$ at double speed, then $(\Z 3)$ fails, as in Example~\ref{ex:torsion}, a certain Euler class will
be a zero-divisor and 
$$i^*:H_{S^1}^*(F\times \C P^1;\Z)\to H_{S^1}^*( (F\times \C P^1)^{S^1};\Z)$$ 
is not injective.  Indeed, if $\alpha$ is any $2$-torsion class in $H^*(F;\Z)$, then 
$$\alpha\otimes 1\in H^*(F;\Z)\otimes H_{S^1}^*(\C P^1;\Z)\cong H_{S^1}^*(F\times \C P^1;\Z)$$ 
is in the kernel of $i^*$.
\end{remark}
\section{Generalizing Tolman and Weitsman's proof of the GKM theorem}
We  need the following technical lemma, which generalizes Lemma 3.2. \cite{TW}.
\begin{lemma}\label{lemma5.5}
Let $E$ be a $\sfT$-equivariant complex vector bundle over a manifold $F$ with an action of $\sfT$. Assume that $[F/\sfS ]$ is an orbifold and $\forall x\in F$, $\sfT=\sfT_x\cdot\sfS $.  Assume that $\sfT_{x,1}$ is independent of $x\in F$. Let $E:=\bigoplus_{\alpha}E_{\alpha}$ be the weight decomposition of the $\sfT_{x,1}$-action.  Then over $\Q$, for each $\eta \in H_{\sfT}^*(F)$, we have
\[
\mbox{If } \eta  \mbox{ is a multiple of } \be_{\sfT}(E_{\alpha}) \mbox{ for all }
\alpha,   \mbox{then } \eta \mbox{ is a multiple of } \cup_{\alpha} \be_{\sfT}(E_{\alpha}) = \be_{\sfT}(E).
\]
If we assume $\sfT_x$ is connected for all $x\in F$, this statement also holds over $\Z$.
\end{lemma}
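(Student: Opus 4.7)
The plan is to adapt the proof of Lemma 3.2 in \cite{TW}, the initial step being to reduce the problem to a statement about $\sfT_1$-equivariant cohomology by trivializing the action of $\sfT_1 := \sfT_{x,1}$ (which by hypothesis is independent of $x \in F$). Since $\sfT_1 \subset \sfT_x$ stabilizes every point of $F$, the subtorus $\sfT_1$ acts trivially on $F$. After choosing a splitting $\sfT = \sfT_1 \times \sfT'$, the Borel model $E\sfT \simeq E\sfT_1 \times E\sfT'$ together with the triviality of the $\sfT_1$-action on $F$ yields the identification
\[
H^*_\sfT(F;\bk) \ \cong \ H^*_{\sfT'}(F;\bk) \otimes_\bk \Sym(\ft_1^*),
\]
valid over $\bk=\Q$ or $\bk=\Z$ because $H^*(B\sfT_1;\Z)$ is free abelian, so the Künneth formula has no Tor contribution.

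Next I would invoke the splitting principle in the $\sfT'$-direction to write $E_\alpha \cong \bigoplus_i L_i^\alpha$ formally as a sum of complex line bundles. Since $\sfT_1$ acts on $E_\alpha$ with the single weight $\alpha$, each $L_i^\alpha$ carries $\sfT_1$-weight $\alpha$ as well, so its $\sfT$-equivariant first Chern class has the form $\alpha + \gamma_i^\alpha$ with $\gamma_i^\alpha \in H^2_{\sfT'}(F;\bk)$. Hence
\[
\be_\sfT(E_\alpha) \ = \ \prod_{i=1}^{r_\alpha} \bigl(\alpha + \gamma_i^\alpha\bigr),
\]
which, viewed in $R := H^*_{\sfT'}(F;\bk) \otimes_\bk \Sym(\ft_1^*)$, is a polynomial of degree $r_\alpha := \rk E_\alpha$ in the linear form $\alpha$ with unit top coefficient $\alpha^{r_\alpha}$.

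At this point the problem reduces to a commutative-algebra divisibility statement in $R$: an element divisible by each of the monic polynomials $\be_\sfT(E_\alpha)$ is divisible by their product, provided that $\be_\sfT(E_\alpha)$ and $\be_\sfT(E_{\alpha'})$ are coprime in $R$ for distinct weights $\alpha \neq \alpha'$. I would proceed by induction on the number of weights: writing $\eta = \be_\sfT(E_{\alpha_1}) \cdot \eta_1$ using divisibility by the first factor, one uses coprimeness to show $\eta_1$ remains divisible by $\be_\sfT(E_{\alpha_i})$ for $i\geq 2$, and then applies the inductive hypothesis. The main obstacle is the coprimeness step. Over $\Q$ it follows from the leading-term analysis in the UFD $\Sym(\ft_1^*)$ together with the pairwise non-proportionality of distinct weights (the GKM-type hypothesis that supplies the intended applications). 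Over $\Z$, the additional hypothesis that each $\sfT_x$ is connected forces $\sfT_x = \sfT_1$ and makes every $\alpha$ a primitive character of $\sfT_1$, so that coprimeness survives integrally, in the same spirit as condition $(\Z 3)$ of Theorem \ref{thm:injectivity}.
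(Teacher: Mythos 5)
Your overall strategy is the same as the paper's (and as Tolman--Weitsman's): split off the $\Sym(\ft_1^*)$-factor from $H^*_{\sfT}(F)$, observe that each $\be_{\sfT}(E_\alpha)$ has ``leading term'' $\alpha^{n_\alpha}$, and finish with a coprimality/divisibility argument. The place where you diverge, and where the gap lies, is in the choice of the complementary tensor factor. You take $H^*_{\sfT}(F)\cong \Sym(\ft_1^*)\otimes H^*_{\sfT'}(F)$ and then assert coprimality of the classes $\be_{\sfT}(E_\alpha)$ by ``leading-term analysis in the UFD $\Sym(\ft_1^*)$.'' But the ring in which you must divide is a polynomial ring over the base $H^*_{\sfT'}(F)$, which is not a UFD: it contains the polynomial subalgebra $H^*(B\sfT')$ as well as (possibly nilpotent, possibly torsion) classes from $F$, so its positive-degree part is not nilpotent and unique factorization arguments do not transfer. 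The coprimality and the inductive divisibility step --- which are the entire content of the lemma --- are therefore left unsupported as stated. Notice that your argument never uses the hypotheses that $[F/\sfS]$ is an orbifold and that $\sfT=\sfT_x\cdot\sfS$ for all $x$; since the lemma is not a formal consequence of the remaining hypotheses alone, this is a sign that something essential is missing.

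What the paper does with those hypotheses is precisely the missing step: because $\sfT=\sfT_{x,1}\cdot\sfS$ and the $\sfS$-action is locally free, the complementary factor is identified with the \emph{ordinary} cohomology of the compact quotient, $H^*_{\sfT}(F;\Q)\cong H_{\sfT_{x,1}}(pt;\Q)\otimes H^*(F/\sfS;\Q)$, whose positive-degree part is nilpotent. One then filters by the $F$-degree: each $\be_{\sfT}(E_\alpha)$ is congruent to $\alpha^{n_\alpha}$ modulo a nilpotent ideal, the monomials $\alpha^{n_\alpha}$ are pairwise coprime in the genuine polynomial ring $\Sym(\ft_1^*\otimes\Q)$ (here both you and the paper implicitly need the distinct weights to be pairwise non-proportional --- you at least flag this), and the divisibility is lifted through the nilpotent filtration by the purely algebraic argument of \cite[p.~8]{TW}. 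Over $\Z$, the connectedness of $\sfT_x$ is used to get $H_{\sfT}(F;\Z)\cong \Z[\alpha_1,\dots,\alpha_n]\otimes H^*(F/\sfS;\Z)$ so that the same argument applies integrally; your remark that connectedness forces $\sfT_x=\sfT_1$ and primitivity of the $\alpha$'s points in the right direction but again does not address the actual lifting step. To repair your write-up, replace $H^*_{\sfT'}(F)$ by $H^*(F/\sfS)$ using local freeness, and replace the appeal to unique factorization by the filtration-and-nilpotence argument. (Your use of the splitting principle is harmless --- you only extract from it that $\be_{\sfT}(E_\alpha)=\alpha^{n_\alpha}+\text{lower order}$, which is a statement in $H^*_{\sfT}(F)$ itself --- but it is also unnecessary.)
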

\proof First we prove in the case when $[F/\sfS ]$ is $0$-dimensional. Over $\Q$, we have the sequence of isomorphisms,
\begin{eqnarray*}
H_{\sfT}^*(F) & = & H^*(E\sfT\times_{\sfT} F) \\
& = & H^*(B\sfT_x\times E(\sfS /\sfS _x)\times_{\sfS /\sfS _x} F) \\
& \cong & H^*(B\sfT_x) \\
& \cong & H^*(B\sfT_{x,1}) \\
& = & \Sym \left(\Hom(\sfT_{x,1}, S^1)\otimes \Q\right) \\
& = & \Q[\alpha_1,\cdots,\alpha_n].
\end{eqnarray*}
 The  equality in the second line holds because $\sfS /\sfS_x$ acts freely on $F$. Via this identification, $\be_{\sfT}(E_{\alpha}) = \alpha^{n_{\alpha}}$ where $n_{\alpha}$ is the rank of $E_{\alpha}$. Since $\alpha$'s are all distinct and non-zero, all $\be_{\sfT}(E_{\alpha})$'s are pairwise relatively prime. Hence $\eta$ must be a multiple of $\cup_{\alpha} \be_{\sfT}(E_{\alpha})$ since $\Q[\alpha_1,\cdots,\alpha_n]$ is a unique factorization domain over $\Q$.  Over $\Z$, the same argument works as long as we have $H_{\sfT}(F,\Z)\cong \Sym \left(\Hom(\sfT_{x,1}, S^1)\right) =\Z[\alpha_1,\cdots,\alpha_n].$ This happens precisely when $\sfT_x$ is connected.  

When $[F/\sfS]$ is not $0$-dimensional, we still have, over $\Q$,
\[ 
H_{\sfT}(F) = H(E\sfT\times_{\sfT} F) = H(B\sfT_{x,1} \times E(\sfS /\sfS_{x,1})\times_{\sfS /\sfS_{x,1}} F) \cong H_{\sfT_{x,1}}(pt)\otimes H(F/\sfS ).
\]
Define the $F$-degree by $H_{\sfT}(F)=\bigoplus_i H_{\sfT_{x,1}}(pt)\otimes H^i(F/\sfS ), \ \ \ a = \sum_i a_i$. Then $\be_{\sfT}(E_{\alpha})_0=\alpha^{n_{\alpha}}$ since the class is determined by pulling back $E_{\alpha}$ via $\{x\} \inc F$. The remainder of the proof is purely algebraic and so the argument in \cite[p.\ 8]{TW} can be followed verbatim.  

Over $\Z$, the above arguments works if  $H_{\sfT}(F) \cong H_{\sfT_{x,1}}(pt)\otimes H(F/\sfS )$. This happens exactly if $\sfT_x$ is connected for all $x \in F$. \qed
\\

Let $[M/\sfS ]$ be a compact, connected Hamiltonian $\sfR$-orbifold where $\sfS  \inc \sfT \twoheadrightarrow \sfR$ is the torus extension such that $\sfT$ acts on $M$ extending the $\sfS$-action. The corresponding maps of Lie algebras are $\fraks \inc \ft \twoheadrightarrow \frakr$. Let $i: F \inc M$ and $j:F \inc M_1$ be the inclusion maps. 
\begin{remark}
Let $N^{\circ}$ be a connected component of $M_1^{\circ}$, and let  $N$  be its closure. Then $[N/\sfS ]$ is a Hamiltonian $\sfR$-orbifold with the moment map induced by the map $\mu|_N: N \to \ft^*$, the restriction of the original $\mu: M \to \ft^*$ (see Remark \ref{normal form for 1 skeleton}). If the injectivity theorem holds over $\Q$ (resp.\ over $\Z$) for $M$, then it also holds over $\Q$ (resp.\ over $\Z$) for $N$. This is because the conditions $(\Q 2)$ (resp.\ $(\Z 2)$) for $M$ imply the ones for $N$.
\end{remark}

We now turn to the constraints on a class restricted to the fixed point set.  Using a component 
of the moment map to order the critical values, at the first fixed set where a class is non-zero, 
its restriction must be a multiple of the equivariant Euler class of the negative normal bundle.

\begin{prop}\label{5.2}
Choose a generic $\xi \in \ft$ and let $F_c$ be a connected component of the intersection $F \cap f^{-1}(c)$, where $f:=\mu^{\xi}$ and $c \in \R$ is a critical value for $\mu^{\xi}$. Let $(a,b)$ be an open interval containing the single critical value $c$. Let $M_{1,b}^-:=M_1\cap f^{-1}(-\infty,b)$ and $F_a^-:=F \cap f^{-1}(-\infty,a)$. Then if 
$
\eta \in H^*_{\sfT}(M_{1,b}^-,\Q)$ satisfies $\eta|_{F_a^-} =0 \in H^*_{\sfT}(F_a^-,\Q)$, we must have that $\eta|_{F_c}$ is a multiple of $\be_{\sfT}(E_c^-) \in H^*_{\sfT}(F_c,\Q)$,
where $E_c^-$ is the $\sfT$-equivariant negative normal bundle of $F_c$ in $M$. 

Moreover, the claim holds with integer coefficients if 
\begin{itemize}
\item[$(\Z 4)$] the isotropy group $\sfT_x$ is connected for all $x \in F$, and each weight of the $\sfT_{x}$-action 
on the normal bundle 
to $F$ at $x$ is primitive for all $x\in F$ 

\noindent (i.e.\ $(\Z 3)$ is satisfied).
\end{itemize}
\end{prop}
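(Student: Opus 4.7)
The plan is to adapt the Morse-theoretic argument of Tolman--Weitsman to the 1-skeleton $M_1$, which by Remark~\ref{normal form for 1 skeleton} decomposes near a point $x\in F_c$ into smooth ``pipes'' $N_\lambda$, one for each nonzero weight $\lambda\in\Hom(\sfT_{x,1},S^1)$ of the $\sfT_{x,1}$-action on $W$; each $N_\lambda$ is itself a Hamiltonian $\sfR$-orbifold containing $F_c$, and they are glued along $F_c$. By Lemma~\ref{global isotropy} the torus $\sfT_{x,1}$ is locally constant on $F$, so the weight subspaces $\overline W_\lambda$ globalize to $\sfT$-equivariant bundles over the component $F_c$, and one has $E_c^-=\bigoplus_{\lambda(\xi)<0}\overline W_\lambda$.

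First I upgrade the hypothesis to $\eta|_{M_{1,a}^-}=0$. By induction on the critical values of $f$ below $c$, running the proof of Theorem~\ref{thm:injectivity} on $M_1$ in place of $M$, the restriction
\[
H^*_{\sfT}(M_{1,a}^-,\Q)\longrightarrow H^*_{\sfT}(F_a^-,\Q)
\]
is injective. Indeed, each connected component of $M_1$ is a compact Hamiltonian $\sfR$-orbifold (Remark~\ref{normal form for 1 skeleton}) for which Lemma~\ref{global isotropy} supplies, at every lower fixed set, a subtorus acting nontrivially on the negative normal bundle; this is exactly hypothesis $(\Q 2)$ of Proposition~\ref{s.e.s}, which drives the induction. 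Thus $\eta|_{F_a^-}=0$ forces $\eta|_{M_{1,a}^-}=0$.

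Now I work one pipe at a time. Fix any negative weight $\lambda_0$ and restrict $\eta$ to $N_{\lambda_0}$. Because $\eta|_{N_{\lambda_0}\cap M_a^-}=0$, the long exact sequence of the pair $(N_{\lambda_0}\cap M_b^-,N_{\lambda_0}\cap M_a^-)$ lifts $\eta|_{N_{\lambda_0}\cap M_b^-}$ to the relative group. Lemma~\ref{morse-bott} and the Thom isomorphism identify this relative group with $H^{*-\dim\overline W_{\lambda_0}}_{\sfT}(F_c)$, and the composition down to $H^*_{\sfT}(F_c)$ is cup product with $\be_{\sfT}(\overline W_{\lambda_0})$. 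Hence $\eta|_{F_c}$ is a multiple of $\be_{\sfT}(\overline W_{\lambda_0})$ for every negative weight $\lambda_0$. Lemma~\ref{lemma5.5} then combines these divisibilities:
\[
\eta|_{F_c}\ \text{is a multiple of}\ \prod_{\lambda(\xi)<0}\be_{\sfT}(\overline W_\lambda)\;=\;\be_{\sfT}(E_c^-),
\]
as required. Under $(\Z 4)$ the same argument runs over $\Z$: primitivity of isotropy weights supplies $(\Z 2)$ throughout the induction, and connectedness of $\sfT_x$ is the hypothesis for the integral version of Lemma~\ref{lemma5.5}.

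The main obstacle is handling the singularity of $M_1$ along $F_c$: the usual Morse-theoretic exact sequence does not directly apply to $M_1$ as a whole. The normal form of Remark~\ref{normal form for 1 skeleton} is precisely what permits the pipe-by-pipe analysis, after which each smooth pipe contributes one Euler-class divisibility that Lemma~\ref{lemma5.5} is designed to consolidate into a single divisibility by $\be_{\sfT}(E_c^-)$.
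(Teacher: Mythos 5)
Your argument is essentially the paper's: restrict to each smooth component (each ``pipe'') $N$ of the closure of $M_1^{\circ}$ through $F_c$, run the Morse-theoretic short exact sequence of Proposition~\ref{s.e.s} there to conclude that $\eta|_{F_c}$ is divisible by the Euler class of the negative normal bundle of $F_c$ in $N$, and consolidate these divisibilities into divisibility by $\be_{\sfT}(E_c^-)$ via Lemma~\ref{lemma5.5}. The one step you should not take at face value is the claimed injectivity of $H^*_{\sfT}(M_{1,a}^-,\Q)\to H^*_{\sfT}(F_a^-,\Q)$: $M_1$ is not a manifold (the pipes are glued along $F$), so Theorem~\ref{thm:injectivity} cannot simply be ``run on $M_1$ in place of $M$,'' and Remark~\ref{normal form for 1 skeleton} concerns closures of connected components of $M_1^{\circ}$, not connected components of $M_1$. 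The detour is also unnecessary: from $\eta|_{F_a^-}=0$ one gets $\eta|_{F\cap N_a^-}=0$ for each pipe $N$, and the injection $H^*_{\sfT}(N_a^-)\into H^*_{\sfT}(F\cap N_a^-)$ occurring in the induction step of Theorem~\ref{thm:injectivity} applied to the manifold $N$ already yields $\eta|_{N_a^-}=0$, which is all your pipe-by-pipe argument actually uses; with that correction the proof is the paper's.
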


\begin{remark}
The hypothesis that $\sfT_x$ be connected is a natural one.  This hypothesis may be exploited to extend a variety of rational cohomology results  to integral cohomology in equivariant symplectic geometry.  This is discussed in work in progress by the first author and Tolman \cite{HT}.  This hypothesis also arises in the work of Franz and Puppe \cite[Theorem 1.1; and Examples 5.3 and 5.4]{FP}.
\end{remark}

\proof There is a connected component $N^{\circ}$ of $M_1^{\circ}$ such that its closure $N$ contains $F_c$, since if $F_c \cap N\not=\phi$, then $F_c \subset N$ by Lemma \ref{comp 1 sk}. If $\eta|_{F_a^-}=0$, then $\eta|_{N \cap F_a^-}=\eta|_{F \cap N_a^- }=0$. In the proof of Theorem~\ref{thm:injectivity}  for the $\sfR$-action on $[N/\sfS ]$, in the middle of the induction step we have the injection $H^*_{\sfT}(N_a^-) \inc H^*_{\sfT}(F \cap N_a^-).$ Hence $\eta|_{F \cap N_a^- }=0$ implies $\eta|_{N_a^-}=0$. Apply Proposition \ref{s.e.s} to the $\sfT$-action on $N$: we obtain a short exact sequence and a commutative diagram
\[
\xymatrix{
0 \ar[r] & H^i_{\sfT}(N_b^-,N_a^-) \ar[r]\ar@{=}[d] & H^i_{\sfT}(N_b^-) \ar[r]^{\beta} \ar[d]^{restriction}  & H^i_{\sfT}(N_a^-) \ar[r]& 0 \\
& H_{\sfT}^{i-\lambda}(F_c) \ar[r]_{\cup \be_{\sfT}(E_{N,c}^-)}& H^i_{\sfT}(F_c) &
}
\]
where $E_{N,c}^-$ is the $\sfT$-equivariant negative normal bundle of $F_c$ in $N$. The exactness and the commutativity of those diagram implies that any element in the kernel $\ker \beta$ is a multiple of the equivariant Euler class $\be_{\sfT}(E_{N,c}^-)$ when it is restricted to $F_c$. Thus by Lemma \ref{lemma5.5} above, we are done. \qed
\\

We have now established the preliminaries necessary to prove the orbifold version of Tolman 
and Weitsman's version of the GKM theorem.

\begin{theorem}
\label{orbifold GKM}
In the diagrams
\[
\begin{array}{c}
\xymatrix{
M  & M_1 \\
F\ar[u]^i \ar[ur]_j
}
\end{array}
\ \ \ \ \ \stackrel{\mbox{take } H_{\sfT}^*}{\Longrightarrow} 
\ \ \ \ \ 
\begin{array}{c}
\xymatrix{
H_{\sfT}(M)\ar[d]_{i^*}  & H_{\sfT}(M_1)\ar[dl]^{j^*} \\
H_{\sfT}(F)
}
\end{array},
\]
we have $\im i^*=\im j^*$ over $\Q$. In particular, $H^*_{\sfT}(M)\cong \im j^*$. If we assume that $\sfT_x$ is connected for all $x\in F$ and that the weights of the $\sfT_{x,1}$-action on the negative normal bundle (for a generic $\mu^{\xi}$) are primitive for all $x\in F$, the claim also holds over $\Z$.
\end{theorem}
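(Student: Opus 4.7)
The inclusion $\im i^* \subseteq \im j^*$ is immediate: for any $\tilde\eta \in H^*_{\sfT}(M)$, its restriction $\tilde\eta|_{M_1}\in H^*_{\sfT}(M_1)$ satisfies $j^*(\tilde\eta|_{M_1}) = i^*\tilde\eta$. The content of the theorem is the reverse inclusion, which I plan to establish by a Morse-theoretic induction following Tolman and Weitsman, using the injectivity theorem (Theorem~\ref{thm:injectivity}) together with Propositions~\ref{s.e.s} and~\ref{5.2} as the essential tools. Injectivity will then guarantee that any lift in $H^*_{\sfT}(M)$, once constructed, is uniquely determined.

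Choose a generic $\xi \in \ft$ so that $f := \mu^{\xi}$ has isolated critical values $c_1 < \cdots < c_n$, pick small $\epsilon > 0$ with each $[a_i,b_i] := [c_i-\epsilon, c_i+\epsilon]$ containing only $c_i$, and set $M_a^- := f^{-1}(-\infty, a)$ and $F_a^- := F\cap M_a^-$. Given $\tau \in H^*_{\sfT}(M_1)$ with $j^*\tau = \eta$, I will produce classes $\tilde\eta_i \in H^*_{\sfT}(M_{b_i}^-)$ such that $\tilde\eta_i|_{F_{b_i}^-} = \eta|_{F_{b_i}^-}$ inductively in $i$. Taking $i=n$ then gives $\tilde\eta_n \in H^*_{\sfT}(M)$ with $i^*\tilde\eta_n = \eta$. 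The base case $i=1$ is immediate: $M_{a_1}^- = \emptyset$, so $H^*_{\sfT}(M_{b_1}^-) \cong H^*_{\sfT}(F_{c_1})$ by Lemma~\ref{morse-bott}, and one takes $\tilde\eta_1$ corresponding to $\eta|_{F_{c_1}}$.

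For the step $i-1 \to i$, Proposition~\ref{s.e.s} supplies the short exact sequence
\[
0 \to H^*_{\sfT}(M_{b_i}^-,M_{a_i}^-) \to H^*_{\sfT}(M_{b_i}^-) \to H^*_{\sfT}(M_{a_i}^-) \to 0,
\]
together with the identification of the left-hand term with $H^{*-\lambda}_{\sfT}(F_{c_i})$, whose image in $H^*_{\sfT}(F_{c_i})$ is cup-product with $\be_{\sfT}(E_{c_i}^-)$. Via the homotopy equivalence $M_{a_i}^- \simeq M_{b_{i-1}}^-$, surjectivity of the sequence produces some lift $\tilde\eta_i' \in H^*_{\sfT}(M_{b_i}^-)$ of $\tilde\eta_{i-1}$. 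The crucial step is to show that the error $\delta := (\eta - \tilde\eta_i')|_{F_{c_i}} \in H^*_{\sfT}(F_{c_i})$ is a multiple of $\be_{\sfT}(E_{c_i}^-)$; I plan to verify this by applying Proposition~\ref{5.2} to the class $\tau|_{M_{1,b_i}^-} - \tilde\eta_i'|_{M_{1,b_i}^-} \in H^*_{\sfT}(M_{1,b_i}^-)$, which vanishes on $F_{a_i}^-$ by the inductive hypothesis and restricts to $\delta$ on $F_{c_i}$. Adding the corresponding element of $H^*_{\sfT}(M_{b_i}^-,M_{a_i}^-)$ to $\tilde\eta_i'$ yields the desired $\tilde\eta_i$; since this correction has zero restriction to $M_{a_i}^-$, the relation $\tilde\eta_i|_{F_{a_i}^-} = \eta|_{F_{a_i}^-}$ is preserved, so $\tilde\eta_i|_{F_{b_i}^-} = \eta|_{F_{b_i}^-}$ and the induction proceeds.

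The main obstacle is exhibiting the obstruction $\delta$ as the restriction of a class on $M_{1,b_i}^-$ that vanishes on $F_{a_i}^-$; once that observation is in place, Proposition~\ref{5.2} does the essential work. Finally, the $\Z$-coefficient version should follow by the same argument: the hypotheses that each $\sfT_x$ is connected and that the negative-normal-bundle weights are primitive are exactly what is needed to invoke the integral versions of Lemma~\ref{lemma5.5}, Proposition~\ref{s.e.s} and Proposition~\ref{5.2} (cf.\ Example~\ref{ex:torsion}), and the same hypotheses ensure the integral injectivity conclusion of Theorem~\ref{thm:injectivity}.
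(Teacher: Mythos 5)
Your proposal is correct and follows essentially the same route as the paper: the paper phrases the inductive step as a morphism of the two short exact sequences (from Proposition~\ref{s.e.s} and its image under restriction to $F$), with surjectivity of the outer vertical maps given by induction and Proposition~\ref{5.2}, and concludes by a Five-Lemma-style chase. Your lift-then-correct construction of $\tilde\eta_i$ is exactly that diagram chase written out explicitly, with the key point --- applying Proposition~\ref{5.2} to $\tau|_{M_{1,b_i}^-}-\tilde\eta_i'|_{M_{1,b_i}^-}$ to see the error $\delta$ as a multiple of $\be_{\sfT}(E_{c_i}^-)$ --- matching the paper's use of Proposition~\ref{5.2} for the surjectivity of the first vertical map.
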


\proof We have proved all the preliminary claims needed, so the proof goes through exactly as in \cite[p. 8--9]{TW}.  We proceed by induction on the index $i=1,\cdots,n$, where the critical values $c_i$ are ordered so that $c_1<\cdots < c_n$. Let $(a,b)$ be an open interval containing only $c_i$. Consider the following map of short exact sequences, using the notation from Proposition \ref{5.2}:
\[
\begin{array}{c}
\xymatrix{
0 \ar[r] & H_{\sfT}^*(M_b^-,M_a^-) \ar[d]\ar[r] & H_{\sfT}^*(M_b^-) \ar[d]\ar[r] & H_{\sfT}^*(M_a^-) \ar[r]\ar[d]& 0 \\
0 \ar[r] & \ker r^*|_{\im j_b^{-*}} \ar[r]& \im j_b^{-*} \ar[r]& \im j_a^{-*} \ar[r]& 0 \\
}\end{array},
\]
where $r: M_{1,a}^- \to M_{1,b}^-$ is the obvious inclusion. The third vertical map is surjective by the inductive assumption. The surjectivity of the first vertical map follows from Proposition 5.2. The surjectivity of the middle vertical map then follows by a diagram chase analogous to the one in the Five Lemma.
\qed
\section{GKM computations for toric orbifolds}\label{comb}
In this section, we compute the $\sfR$-equivariant cohomology of $[M/\sfS ]$ for compact symplectic toric orbifolds. We show that it is isomorphic, with $\Z$-coefficients, to the Stanley-Reisner ring of the corresponding moment polytope. This generalizes the case of smooth toric manifolds. This result should also follow from an argument using the moment angle complex, since the moment angle complex only depends on the combinatorial type of the polytope (see \cite{BP}, \cite{P}).  Nonetheless, we will now apply our techniques to obtain the following.

\begin{theorem}\label{stanley reisner for orbifold}
Let $\sfS $ be an $(m-n)$-dimensional subtorus of the $m$-dimensional torus $\sfT$ which acts on $\C^m$ coordinate-wise, and let $\mu_{\sfS}: \C^m \to \fraks^*$ be the induced moment map of the $\sfS$-action, so that $M:=\mu_{\sfS}^{-1}(\eta)$ is a compact manifold, for a regular value $\eta \in  \fraks^*$.  Let $\Delta$ be the moment polytope of the compact toric orbifold obtained as the symplectic reduction $\C^m/\!/ \sfS = [M/\sfS] $ at the regular value $\eta$. Then
\[
H^*_{\sfR}\left(\C^m/\!/ \sfS ,\Z\right) \cong \SR(\Delta)
\]
where $\SR(\Delta)$ is the Stanley-Reisner ring of the polytope $\Delta$.
\end{theorem}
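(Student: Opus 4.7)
The plan is to reduce the computation to Theorem~\ref{orbifold GKM} over $\Z$ and then match the resulting GKM description of $H^*_{\sfR}(\C^m/\!/\sfS,\Z)$ with $\SR(\Delta)$ via pullback along the inclusion $M\inc\C^m$. First I would verify the integer-coefficient hypotheses of Theorems~\ref{thm:injectivity} and \ref{orbifold GKM}: for the standard $\sfT=(S^1)^m$-action on $\C^m$, the stabilizer $\sfT_x$ of any $x\in M$ is the coordinate subtorus indexed by the vanishing coordinates of $x$, which is automatically connected; and the $\sfT$-weights on $T_x\C^m$ are the standard basis $e_1,\dots,e_m$, which are primitive and descend to primitive $\sfR$-weights on the normal bundles of the fixed orbifold components (and a fortiori on their negative subbundles). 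Thus $(\Z 3)$ and the connectedness hypotheses both hold, and the orbifold GKM theorem applies over $\Z$.

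Next I would identify the combinatorics. Since $\Delta$ is a simple $n$-polytope, a vertex $v$ is the intersection of exactly $n$ facets $F_i$, $i\in I_v$; the corresponding $x_v\in M$ has exactly these $n$ coordinates vanishing, $\sfT_{x_v}$ is an $n$-torus, and $F_v=\sfT\cdot x_v\cong\sfT/\sfT_{x_v}$ is a connected component of $[M/\sfS]^\sfR$. This gives a bijection between vertices of $\Delta$ and components of the fixed orbifold set, and analogously an edge of $\Delta$ corresponds to a component of the orbifold 1-skeleton whose primitive $\sfR$-weight is the primitive edge vector in $\frakr^*$. By Theorem~\ref{orbifold GKM}, $H^*_\sfR([M/\sfS],\Z)$ is identified with the subring of $\bigoplus_v H^*(B\sfT_{x_v},\Z)=\bigoplus_v\Z[\alpha_1^v,\dots,\alpha_n^v]$ satisfying the standard GKM edge congruences.

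To construct the ring map $\SR(\Delta)\to H^*_\sfR([M/\sfS],\Z)$, I would use the $\sfT$-equivariant inclusion $M\inc\C^m$. Since $\C^m$ is $\sfT$-equivariantly contractible to the origin, $H^*_\sfT(\C^m,\Z)=\Z[e_1,\dots,e_m]$, and the composite with restriction to $F_v$ sends $e_i$ to the restriction of the character $e_i$ to $\sfT_{x_v}$: the basis weight $\alpha_i^v$ if $i\in I_v$, and $0$ otherwise. Consequently a squarefree monomial $e_{i_1}\cdots e_{i_k}$ with $F_{i_1}\cap\cdots\cap F_{i_k}=\emptyset$ vanishes at every vertex (no $v$ contains all the chosen facets) and is therefore zero in $H^*_\sfT(M,\Z)$ by Theorem~\ref{thm:injectivity}, so the map factors through $\SR(\Delta)$. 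The same vertex-restriction analysis yields injectivity, as distinct squarefree monomials in the SR quotient have distinct supports on vertices.

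The main obstacle is surjectivity over $\Z$. For each vertex $v$ I would define $\tau_v:=\prod_{i\in I_v}e_i\in\SR(\Delta)$; since $\Delta$ is simple we have $|I_v|=|I_w|=n$, so $I_v\subseteq I_w$ forces $v=w$, and $\tau_v$ restricts to the top monomial $\prod_{i\in I_v}\alpha_i^v$ at $v$ and to $0$ at every other vertex. Choosing a generic component of the moment map to order the vertices, I would then show inductively (from the highest vertex downwards) that any GKM class can be adjusted by $H^*(B\sfT,\Z)$-multiples of the $\tau_v$ to annihilate its restriction at successive vertices, producing an explicit preimage in $\SR(\Delta)$. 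The primitivity of the weights---precisely condition $(\Z 3)$---is what guarantees that the divisibility relations imposed by GKM along the edges at each vertex can be solved over $\Z$ rather than only over $\Q$, which is the essential integrality input for the inductive step to close.
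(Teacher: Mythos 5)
Your setup is sound and matches the paper's strategy: the verification of connectedness of $\sfT_x$ and primitivity of the weights (so that Theorems~\ref{thm:injectivity} and \ref{orbifold GKM} apply over $\Z$), and the dictionary vertices $\leftrightarrow$ fixed components, edges $\leftrightarrow$ 1-skeleton components, are exactly what Section~6 does. The well-definedness and injectivity of $\SR(\Delta)\to\bigoplus_v\Z[x_i\,:\,i\in I_v]$ are also fine (this is the classical identification of $\SR(\Delta)$ with the ring $R_\Delta$ of piecewise polynomials, which the paper simply cites).

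The genuine gap is in your surjectivity step. The class $\tau_v=\prod_{i\in I_v}e_i$ restricts at $v$ to the \emph{full} product $\prod_{i\in I_v}\alpha_i^v$ of all $n$ weights at $v$ (a class of degree $2n$) and to $0$ at every other vertex, so any $H^*(B\sfT;\Z)$-combination of the $\tau_v$ produces only tuples whose component at each vertex is divisible by that full top Euler class. The identity class $(1,\dots,1)$, or any GKM class of degree less than $2n$, is not of this form, so your top-down induction cannot even begin: there is in general no multiple of $\tau_{v_{\max}}$ that annihilates the restriction of a given class at the highest vertex. The classes that make the Morse induction work are the \emph{flow-up} classes $\prod_{i\in I_v^-}e_i$, where $I_v^-\subset I_v$ indexes only the facets through $v$ containing the downward edges for a generic $\mu^\xi$; these restrict at $v$ to $\be_{\sfT}(E_v^-)$ (the divisibility guaranteed at each inductive stage by Proposition~\ref{5.2}) and vanish at all lower vertices, and the induction must run from the bottom up, not the top down. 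The paper avoids constructing such classes altogether: Theorem~\ref{orbifold GKM} already gives $\im i^*=\im j^*$ over $\Z$, and Section~6.3 computes $\im j^*$ edge by edge --- using the surjectivity of $H^*_{\sfT}(\C^m)\to H^*_{\sfT}(N_{v,u})$, obtained from the equivariant cohomology of the pair $(\C^k,S^{2k-1})$ and the K\"unneth formula --- to be exactly the ring $R_\Delta$ of compatible tuples $p_v|_{x_j=0}=p_{v'}|_{x_i=0}$, which is then identified with $\SR(\Delta)$ by the cited classical isomorphism. Either repair (flow-up classes with a bottom-up induction, or the explicit edge computation of $\im j^*$) closes the argument; as written, yours does not.
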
 

The (ordinary) Chow of an algebraic toric orbifold has been computed by Iwanari \cite{I} as
the quotient of the Stanley-Reisner ring by linear terms.
The ordinary integral cohomology $H^*(X;\Z)$ need not be the quotient of the Stanley-Reisner ring by linear terms.
As discussed in \cite{H} after the proof of Theorem~4.2, the integral cohomology of a direct product of two identical
weighted projective spaces has torsion in odd degrees, whereas the Stanley-Reisner ring can only contribute to 
even degrees.  This is explored further in \cite{LMM}.

\subsection{Stanley Reisner ring and the direct sum decomposition}\label{SR}
Let $\Delta$ be a simple polytope of $n$-dimension. We let $K_{\Delta}$ denote the 
associated simplicial complex  whose vertices are the facets of $\Delta$, and a 
collection of vertices is a simplex in $K_{\Delta}$ if and only if  the corresponding 
collection of facets in $\Delta$ has non-empty intersection. The Stanley-Reisner 
ring $\SR(\Delta)$ of $\Delta$ is defined as the Stanley-Reisner ring 
$\SR(K_{\Delta})$  of the simplicial complex $K_{\Delta}$ associated to $\Delta$. 
Namely, 
\[
\SR(\Delta):=\frac{\Z[x_1,\cdots,x_m]}{\lan \prod_{i \in \sigma} x_i \ |\ \sigma \not\in K_{\Delta} \ran}
\]
where $m$ is the number of facets of $\Delta$. We define the ring
\begin{eqnarray*}
R & := & \bigoplus_{v \ vertex\ of\ \Delta}  \Z[x_{i_1},\cdots, x_{i_n}\ |\ v=H_{i_1}\cap \cdots \cap H_{i_n}] \\
& = & \bigoplus_{v \ vertex\ of\ \Delta}  \Z[x_{i_1},\cdots, x_{i_n}\ |\ H^*_v=\{i_1,\cdots,i_n\}], 
\end{eqnarray*}
where $\{H_1,\cdots, H_m\}$ is the set of facets of $\Delta$ and $H^*_v$ is the facet of $K_{\Delta}$ corresponding to the vertex $v$ of $\Delta$. We define a subring of $R$ by
\[
R_{\Delta} := \left\{ (p_{v_1},\cdots,p_{v_l}) \in R\ \left|\  \begin{array}{c}
p_v|_{x_j=0} = p_{v'}|_{x_i=0}\\ 
\forall \mbox{ edge } (v,v')\mbox{ in }\Delta, \\ 
\mbox{where }  j \in H^*_v \backslash H^*_{v'} \ \ i \in H^*_{v'} \backslash H^*_{v}
\end{array} \right.\right\}.
\]
This ring $R_{\Delta}$ is the algebra of continuous piece-wise polynomial functions on the fan canonically defined by the simplicial complex $K_{\Delta}$ and is well-known to be isomorphic to $\SR(\Delta)$ via
\[
x_i \mapsto (p_{v_1}^i,\cdots, p_{v_l}^i)\ \ ,  \ \mbox{ where } \ \ \ p_v^i=\left\{\begin{array}{cc}\!\!0 & \mbox{ if } i \not\in H_v^* \\\! x_i & \mbox{ if } i \in H_v^* \end{array}\right..
\]
See, for example, \cite[\S1.3]{B2}, \cite{BR} or \cite{Bl}. 
\subsection{Injectivity over $\Z$ for toric orbifolds}
We recall the construction of symplectic toric orbifolds in \cite{LT}. Let $\Delta$ be a simple integral polytope in $\R^n$.  We may identify $\R^n \cong \frakr^*$. Let  $\calH=\{H_1,\cdots, H_m\}$ be the set of facets and $\rho_1,\cdots, \rho_m \in \Z^n$ the primitive inward normal vectors to facets and let $b_1,\cdots,b_m \in \Z_{>0}$ be positive integers that label the facets of $\Delta$ in the sense of \cite{LT}.  The polytope is given by
$$\Delta=\left\{ v \in \R^n\ \big|\ \lan b_i\rho_i, v\ran + \eta_i \geq 0, \ \ i=1,\cdots,m\right\}$$ 
for some $\eta:=(\eta_i) \in \R^n$. Let $B:=\left[\ b_1\rho_1\,|\,\cdots \,|\,b_m\rho_m\ \right] \in \mbox{Mat}_{n\times m}(\Z)$. Suppose that the transpose $^t B: \Z^n \to \Z^m$ has free cokernel so that we have a short exact sequence
\[
\xymatrix{ 0 \ar[r] & \Z^{m-n} \ar[r]^{A}& \Z^m \ar[r]^B & \Z^{n} \ar[r] & 0  }.
\]
The matrix $A \in \Mat_{m,m-n}(\Z)$ is given by choosing a basis of  $\ker(B)$. Now applying $\Hom(- ,S^1)$, we get an exact sequence of tori 
\[
\xymatrix{ 1 \ar[r] & \sfS^{(m-n)} \ar[r]^{\tilde{A}}& \sfT^{(m)} \ar[r]^{\tilde{B}} & \sfR^{(n)} \ar[r] & 1  },
\] 
and the corresponding exact sequence of Lie algebras 
\[
\xymatrix{ 0 \ar[r] & \fraks^{(m-n)} \ar[r]^{A}& \ft^{(m)} \ar[r]^B & \frakr^{(n)} \ar[r] & 0  }.
\]
Let $\overline\mu: \C^m \to  \fraks^*$ be the moment map for the action of $\sfS $ defined by the standard $\sfT$-action on $\C^m$ through the exact sequence above. This sends $(z_1,\cdots,z_m) \in \C^m$ to
\[
z =(z_1,\cdots, z_m) \mapsto\
^t\!A\cdot \left[\begin{array}{c}|z_1|^2 \\ \vdots \\ |z_m|^2\end{array}\right] = \left[\begin{array}{c}a_{11}|z_1|^2 + \cdots + a_{m1}|z_m|^2\\ \vdots \\ a_{1,m-n}|z_1|^2  + \cdots + a_{m,m-n}|z_m|^2\end{array}\right] ,
\]
where $^tA =\ ^t(a_{ij})_{1\leq i \leq m \atop{1\leq j\leq m-n }}.$ The orbifold $[M/\sfS ^{(m-n)}]$ corresponding to the labeled poyltope $(\Delta,b)$ is given by reduction at $\eta':=\ ^tA\cdot \eta \in \fraks^*$. Namely, 
\[
M=\mu^{-1}(\eta') = \left\{z \in \C^m\ |\ \ ^t\!A\cdot |z|^2 =\eta'\right\}.
\]
\begin{lemma}[\cite{LT}, proof of Theorem 8.1]
Let $v=H_{i_1} \cap \cdots \cap H_{i_n} \in \frakr^* \cong \R^n$ be a vertex of $\Delta$. Then the corresponding fixed orbifold point in $[M/\sfS ]$ is given by $[F_v/\sfS ]$ where 
\[
F_v=\{(z_1,\cdots, z_m) \in \C^m \ \big|\ |z_i|^2=( ^t\!B\cdot v + \eta)_i, \ \ i=1,\cdots,m \},
\]
and $( ^t\!B\cdot v + \eta)_i=0$ if and only if $i=i_1,\cdots, i_n$. For each $x \in F_v$, the isotropy group $\sfT_x$ is a subtorus of $\sfT$ 
\[
\sfT_x=\left\{(t_1,\cdots,t_m) \in \sfT \ \big|\ t_i=1\ \ \forall i \in [m]\backslash \{i_1,\cdots,i_n\}\right\}.
\]
In particular, the isotropy $\sfT_{F_v}$ of $F_v$ equals $\sfT_x$ for every $x \in F_v$.
\end{lemma}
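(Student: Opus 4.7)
The proof is essentially an explicit computation using the coordinate description of the symplectic reduction. The plan is to identify $F_v$ as the preimage under $\mu_{\sfT}|_M$ of the appropriate slice of $\ft^*$ corresponding to the vertex $v$, and then directly read off the stabilizer from the vanishing locus of the coordinates.

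First, I would observe that the $\sfR$-moment map on $[M/\sfS]$ is induced by $\mu_{\sfT}|_M$ via the identification $\frakr^* \cong \ker({}^tA) \subset \ft^*$ through the inclusion ${}^tB$. Thus the set of $\sfR$-fixed orbifold points $[F/\sfS]$ in $[M/\sfS]$ is precisely the preimage of the vertices of $\Delta$ under $\mu_{\sfR}$. For a vertex $v$, the fixed set over $v$ is
\[
F_v = M \cap \mu_{\sfT}^{-1}\!\left({}^tB\cdot v + \eta\right)
     = \left\{(z_1,\ldots,z_m)\in\C^m \ \big|\ |z_i|^2 = ({}^tB\cdot v + \eta)_i,\ \forall i\right\},
\]
using that $\mu_{\sfT}(z)_i = |z_i|^2 - \eta_i$ (up to the conventional normalization) and that the level set $\mu_{\sfS}^{-1}(\eta')$ is automatic once the $\sfT$-moment map is fixed to a value in $\ker({}^tA) + \eta$.

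Next, I would verify the vanishing condition on $({}^tB\cdot v + \eta)_i$. Since the $i$-th row of ${}^tB$ is $b_i\rho_i$, we have $({}^tB\cdot v + \eta)_i = \langle b_i\rho_i, v\rangle + \eta_i$, which is precisely the defining function of the facet $H_i$. Hence this quantity vanishes if and only if $v$ lies on $H_i$, i.e.\ $i \in \{i_1,\ldots,i_n\}$. This is the required characterization of the vanishing locus.

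The computation of the isotropy group is immediate once this coordinate description is in hand. For $x = (z_1,\ldots,z_m) \in F_v$, the coordinate $z_i$ is nonzero exactly for $i \notin \{i_1,\ldots,i_n\}$. Since $\sfT$ acts coordinatewise on $\C^m$, an element $t = (t_1,\ldots,t_m) \in \sfT$ fixes $x$ if and only if $t_i z_i = z_i$ for every $i$, which is equivalent to $t_i = 1$ for every $i$ with $z_i \neq 0$, i.e.\ for every $i \notin \{i_1,\ldots,i_n\}$. The remaining coordinates $t_{i_1},\ldots,t_{i_n}$ are unconstrained. This gives exactly the subtorus described in the statement, and since the description depends only on the combinatorial label $\{i_1,\ldots,i_n\}$ of $v$ and not on the specific choice of $x \in F_v$, we conclude $\sfT_{F_v} = \sfT_x$ for every $x \in F_v$. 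There is no serious obstacle in the proof; the only step requiring care is bookkeeping the identifications ${}^tA$, ${}^tB$ correctly and verifying that $\mu_{\sfT}^{-1}({}^tB\cdot v + \eta)$ is automatically contained in $M$, which follows from ${}^tA\cdot{}^tB = 0$ and ${}^tA\cdot \eta = \eta'$.
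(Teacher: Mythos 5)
Your proof is correct and follows essentially the same route as the paper, which simply defers the identification of $F_v$ and the stabilizer computation to the proof of Theorem~8.1 in \cite{LT} and only spells out the verification that $({}^t\!B\cdot v+\eta)_i$ vanishes exactly for $i\in\{i_1,\dots,i_n\}$ via the facet-defining equations. You have just filled in explicitly the coordinate computation that the paper cites.
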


\proof This follows from the proof of Theorem 8.1 in \cite{LT}. Since a vertex $v \in \Delta \subset \R^n$ is given by $n$ equations $\lan b_{i_1}\rho_{i_1},v \ran  +\eta_{i_1}=0,\  \cdots , \  \lan b_{i_n}\rho_{i_n},v \ran +\eta_{i_n}=0$, we have  $(^t\!B\cdot v + \eta)_i=0$ if and only if $i=i_1,\cdots, i_n$. \qed
\

\

The local normal form around a point $p_v$ in $F_v$ is given by
\[
U_{p_v} \cong \sfT \times_{\sfT_{F_v}} W, \ \mbox{ where } W \cong \C\cdot \frac{\partial}{\partial z_{i_1}} \oplus \cdots \oplus \C\cdot \frac{\partial}{\partial z_{i_n}}
\]
and the weight of $\C\cdot \frac{\partial}{\partial z_{i_k}}, k=1,\cdots,n$ is $\lambda_{i_k} \in \Hom(\sfT_{F_v},S^1)$ defined by $\lambda_{i_k}(t)=t_{i_k}$ for all $t \in \sfT_{F_v}$. Therefore, the action satisfies hypothesis $(\Z 2)$.
Thus Theorem \ref{orbifold GKM} holds over $\Z$.  
\begin{prop}
The restriction map $H_{\sfT}^*(M,\Z) \to H^*_{\sfT}(F,\Z)$ is injective where $F$ is the union of $F_v$'s for all vertices $v \in \Delta$ and its image coincides with the image of $H_{\sfT}^*(M_1, \Z)\to  H^*_{\sfT}(F,\Z)$.
\end{prop}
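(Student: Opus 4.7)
The plan is to verify the two integer-coefficient hypotheses of Theorem~\ref{orbifold GKM} and then invoke that theorem directly. The preceding lemma has already produced the explicit local normal form at each orbifold fixed point, so what remains is a short bookkeeping check that the connectedness of the stabilizers and the primitivity of the isotropy weights both hold at every connected component of $F$.

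First I would read off the stabilizer at a point $x \in F_v$ directly from the preceding lemma: it is the coordinate subtorus
\[
\sfT_x = \{(t_1,\ldots,t_m) \in \sfT^{(m)} : t_i = 1 \text{ for all } i \notin \{i_1,\ldots,i_n\}\},
\]
which is visibly connected. Hence $\sfT_{x,1} = \sfT_x = \sfT_{F_v}$ for every $x \in F_v$, and the connected components of $F$ are precisely the individual orbits $F_v$ indexed by the vertices of $\Delta$. This settles the connectedness hypothesis at every fixed orbifold point.

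Next I would verify primitivity. The local normal form displays $W$ as $\bigoplus_{k=1}^n \C \cdot \partial/\partial z_{i_k}$ with $\sfT_{F_v}$ acting through the characters $\lambda_{i_k}(t) = t_{i_k}$. Under the natural identification $\sfT_{F_v} \cong (S^1)^n$ indexed by $\{i_1,\ldots,i_n\}$, each $\lambda_{i_k}$ is a coordinate projection, hence a primitive element of $\Hom(\sfT_{F_v}, S^1) \cong \Z^n$. Since the $\sfT$-normal bundle $\nu(F_v \subset M)$ is built out of precisely these weight spaces, this is exactly hypothesis $(\Z 3)$; for a generic rational $\xi \in \ft$ the negative normal bundle at each $F_v$ is a subsum of the same weight spaces, so $(\Z 2)$ is satisfied as well.

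With both hypotheses in place, Theorem~\ref{orbifold GKM} applies over $\Z$ and yields simultaneously the injectivity of $i^*$ and the coincidence $\im i^* = \im j^*$ in integer cohomology. I do not foresee any real obstacle here; the substance of the argument is the local normal form that has already been worked out, and the present statement simply packages that data in the form required by the integral version of the orbifold GKM theorem.
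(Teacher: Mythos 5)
Your proposal is correct and follows essentially the same route as the paper: the text immediately preceding this proposition reads off from the Lerman--Tolman lemma that each $\sfT_{F_v}$ is a connected coordinate subtorus acting on $W\cong\bigoplus_k \C\cdot\partial/\partial z_{i_k}$ with primitive coordinate characters $\lambda_{i_k}$, verifies the integral hypotheses, and then invokes Theorem~\ref{orbifold GKM} over $\Z$. Your write-up is if anything slightly more explicit than the paper's about why the negative normal bundle inherits the primitivity, but the substance is identical.
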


\subsection{The 1-skeleton and GKM computations}
Recall that $H_v^*$ is the facet of the associated simplicial complex $K_{\Delta}$ corresponding to a vertex $v$ of $\Delta$.  For each edge $(v,u)$ of $\Delta$,  we have $|H_v^*\cap H_u^*|=n-1$. Letting $\{a\} =H_v^*\backslash H_u^*$ and $\{b\} = H_u ^*\backslash H_v^*$, the corresponding component of the 1-skeleton is
\begin{eqnarray*}
N_{v,u} 
&=&\left\{(z_1,\cdots,z_m) \in \C^m\ \big|\ |z_k|^2 = s\overline v_k +(1-s) \overline u_k, \ s\in[0,1]  \right\}, 
\end{eqnarray*}
where $\overline v = \ ^tB\cdot v + \eta$. Therefore, $(z_1,\cdots,z_m) \in N_{v,u}$ is given by
\begin{eqnarray*}
|z_k|^2 &=& 0  \ \ \ \ \ \ \forall k \in H_v^*\cap H_u^*,\\
|z_b|^2 &=& s\overline v_b \not=0,\\
|z_a|^2&=& (1-s)\overline u_a\not=0,\\
|z_l|^2 &=& s\overline v_l +(1-s)\overline u_l\not=0 \ \ \ \ \ \forall l \not\in H_v^*\cup H_u^*.
\end{eqnarray*}
By getting rid of the parameter $s$, we can also write $N_{u,v}$  as
\begin{eqnarray*}
|z_k|^2 &=& 0  \ \ \ \ \ \ \forall k \in H_v^*\cap H_u^*,\\
\overline u_a |z_b|^2 + \overline v_b |z_a|^2  &=& 2 \overline u_a \overline v_b \not=0,\\
|z_l|^2 &=& \frac{|z_b|^2}{\overline v_b}\overline v_l +(1-\frac{|z_b|^2}{\overline v_b})\overline u_l\not=0 \ \ \ \ \ \forall l \not\in H_v^*\cup H_u^*.
\end{eqnarray*}
The pair $(\C^m, N_{v,u})$ is $\sfT$-equivariantly homotopic to the pair $$(\C^m, \{0\}^{n-1}\times S^3 \times (S^1)^{m-n-1}).$$  We have the following short exact sequence (cf. Theorem 4.2 \cite{H})
\[
0 \to H^i_{(S^1)^k}(\C^k, S^{2k-1}) \to H^i_{(S^1)^k}(\C^k) \to H^i_{(S^1)^k}(S^{2k-1}) \to 0.
\]
By applying the K\"{u}nneth formula, we obtain a surjection 
$H^*_{\sfT}(\C^m) \surj H^*_{\sfT}(N_{v,u})$. 
\begin{lemma}\label{lemma holm}
The inclusion $\pi': N_{v,u} \inc \C^m$ induces a surjective map 
\[
\pi'^*: H^*_{\sfT}(\C^m) \to H^*_{\sfT}(N_{v,u}).
\] 
Furthermore, since 
$\C^m$ equivariantly retracts to $\{0\}$, by the commutativity of the diagram
\[
\begin{array}{c}
\xymatrix{
&&E\sfT \times_{\sfT} \C^m \ar@{=}[d]^{homot} &&\\
E\sfT \times_{\sfT} N_{v,u}\ar[rru]^{\pi'} \ar[rr]_{\ \ \ \ \ \ \ \ \pi}&& B\sfT
}\end{array},
\]
$\pi^*$ is surjective. 
\end{lemma}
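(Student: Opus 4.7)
The plan is to leverage the $\sfT$-equivariant homotopy equivalence, already noted in the preceding paragraph, between the pair $(\C^m, N_{v,u})$ and $(\C^m, \{0\}^{n-1}\times S^3 \times (S^1)^{m-n-1})$, reducing the problem to a product decomposition argument via the equivariant K\"unneth formula. First I would reorder the coordinates of $\C^m$ so that the first $n-1$ positions are indexed by $H_v^*\cap H_u^*$, the next two by $\{a,b\}$, and the last $m-n-1$ by the complementary indices. Under the corresponding splitting $\sfT \cong (S^1)^{n-1}\times (S^1)^2\times (S^1)^{m-n-1}$, the decomposition $\C^m \cong \C^{n-1}\times \C^2\times \C^{m-n-1}$ is equivariant, and $\pi'$ becomes (up to homotopy) a product of three equivariant inclusions.

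Next I would verify that the cohomology rings in question are all free over $\Z$, so that the equivariant K\"unneth formula splits $\pi'^*$ into the tensor product of three factor pullbacks. The freeness is immediate once one identifies each ring: $H^*_{(S^1)^k}(\C^k)\cong \Z[x_1,\dots,x_k]$, $H^*_{(S^1)^{m-n-1}}((S^1)^{m-n-1})\cong \Z$, and $H^*_{(S^1)^2}(S^3)$ is a quotient of $\Z[x_1,x_2]$ by a regular element, as obtained from the displayed short exact sequence. With K\"unneth in hand, surjectivity of $\pi'^*$ reduces to surjectivity of each factor: the inclusion $\{0\}^{n-1}\inc \C^{n-1}$ induces an isomorphism; the inclusion $S^3 \inc \C^2$ yields a surjection by the displayed short exact sequence with $k=2$; and $(S^1)^{m-n-1}\inc \C^{m-n-1}$ is itself a K\"unneth tensor product of $m-n-1$ copies of the surjection coming from the same short exact sequence at $k=1$.

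For the second assertion, the commutative diagram together with the $\sfT$-equivariant retraction of $\C^m$ onto $\{0\}$ expresses $\pi^*$ as the composition of the homotopy isomorphism $H^*(B\sfT)\cong H^*_\sfT(\C^m)$ with the already-established surjection $\pi'^*$, whence $\pi^*$ is surjective. I do not anticipate a serious obstacle here: the only genuinely nonformal input is the short exact sequence quoted from \cite{H}, and once that is assumed, the entire argument is routine bookkeeping with the equivariant K\"unneth formula and the freeness observations above.
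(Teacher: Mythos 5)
Your proposal is correct and follows essentially the same route as the paper: the $\sfT$-equivariant homotopy of $(\C^m, N_{v,u})$ to $(\C^m, \{0\}^{n-1}\times S^3\times (S^1)^{m-n-1})$, the short exact sequence $0 \to H^i_{(S^1)^k}(\C^k, S^{2k-1}) \to H^i_{(S^1)^k}(\C^k) \to H^i_{(S^1)^k}(S^{2k-1}) \to 0$ applied factorwise, and the K\"unneth formula to assemble the surjection, with the retraction diagram giving surjectivity of $\pi^*$. Your extra care about $\Z$-freeness of the factors to justify K\"unneth is a reasonable elaboration of what the paper leaves implicit.
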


Now consider the diagram
\[
\begin{array}{c}
\xymatrix{
E\sfT\times_{\sfT}(N_{v,u}) \ar[r]^{\pi} & B\sfT \\ 
E\sfT\times_{\sfT} F_v \sqcup E\sfT\times_{\sfT} F_{u} \ar[u]^i\ar[ru]_{\rho} & }
\end{array}.
\]
Taking cohomology with $\Z$ coefficients, we obtain
\[
\begin{array}{c}
\xymatrix{
H_{\sfT}(N_{v,u}) \ar[d]_{i^*} & \Z[x_1,\cdots, x_m] \ar[l]_{\pi^*} \ar[ld]^{\rho^*}\\
\Z[x_{i_1},\cdots,x_{i_{n-1}},x_a] \oplus \Z[x_{i_1},\cdots,x_{i_{n-1}},x_b] &
}\end{array},
\]
where $H_{v}^*=\{i_1,\cdots,i_{n-1},a\}$ and $H_{u}^*=\{i_1,\cdots,i_{n-1},b\}$. The image of $i^*$ consists $(p_v,p_u)$ such that $p_v|_{x_a=0}=p_u|_{x_b=0}$. Thus the image of $H^*_{\sfT}(M_1)$ in $H_{\sfT}(F)$ is
\[
\left\{ (p_{v_1},\cdots,p_{v_l}) \in R\ \left|\ \begin{array}{c}
 p_v|_{x_j=0} = p_{v'}|_{x_i=0}\\ 
 \forall \mbox{ edge } (v,v')\mbox{ in }\Delta\\
 \mbox{where }  \{j \}= H^*_v \backslash H^*_{v'} \ \ \{i \}= H^*_{v'} \backslash H^*_{v}  \end{array}
 \right. \right\},
\]
which coincides with $R_{\Delta}$ defined in Section \ref{SR}.  Thus we may conclude that the $\sfR$-equivariant cohomology of the toric orbifold $[M/\sfS ]$ is isomorphic to the Stanely-Reisner ring of $\Delta$.  This completes the proof of Theorem~\ref{stanley reisner for orbifold}.
\section{$\sfR$-equivariant Chen-Ruan orbifold cohomology of $[M/\sfS ]$}\label{sec:eqCR}

We now turn to $\sfR$-equivariant Chen-Ruan theory.  When $\sfR$ is the trivial group, our definitions agree with the usual non-equivariant ones \cite{CR1}.  After defining the $\sfR$-equivariant Chen-Ruan orbifold cohomology ring of the $\sfR$-orbifold $[M/\sfS ]$, we survey the literature on this topic which motivates our definitions and results. The inertia manifold for the locally free $\sfS $-action on $M$ is defined by $\cI_{\sfS } M := \bigsqcup_{g \in \sfS } M^{g}$, where $M^g$ is the set of fixed points by the subgroup $\lan g \ran$ generated by $g$. This disjoint union is a finite union, since $M$ is compact and the $\sfS $-action is locally free. There is also an induced $\sfT$-action on $M$, which allows us to define the Hamiltonian $\sfR$ action on the orbifold $$\cI[M/\sfS ]:= \bigsqcup_{g \in \sfS } [M^{g}/\sfS ],$$ called the {\bf inertia orbifold}. We may then define the {\bf $\sfR$-equivariant Chen-Ruan cohomology} of $[M/\sfS ]$, as a vector space, to be 
\[
H_{orb,\sfR}([M/\sfS ]):=H_{\sfT}(\cI_{\sfS }M)=\bigoplus_{g \in \sfS } H_{\sfT}(M^g) = \bigoplus_{g \in \sfS } H_{\sfR}([M^g/\sfS ]).
\] 
Let $M^{g,h}:=M^g\cap M^h$. The normal bundle $N_{M^g\subset M}$ of $M^g$ in $M$ is then a $\sfT$-equivariant complex vector bundle, with weight decomposition 
\[
N_{M^g\subset M} = \bigoplus_{\lambda \in \Hom(\lan g\ran,S^1)} W_{\lambda}.
\] 
Define an element $\calS_g$ of the $\sfT$-equivariant (topological) $K$-theory $K_{\sfT}(M^g)\otimes\Q$ of $M^{g,h}$ over $\Q$ by
\[
\calS_g = \bigoplus_{\lambda \in \Hom(\lan g\ran,S^1)} a_{\lambda}(g) W_{\lambda}
\]
where $a_{\lambda}(g) \in [0,1)$ is the {\bf age}, defined by $\lambda(g)=e^{2\pi i a_{\lambda}(g)}$. Following \cite{EJK, JKK}, define the {\bf equivariant virtual bundle} $\calR_{M}(g,h)$ as an element of $K_{\sfT}(M^{g,h})\otimes \Q$
\[
\calR_{M}(g,h):=\ominus N_{M^{g,h}\subset M} \oplus \cS_g|_{M^{g,h}} \oplus \cS_h|_{M^{g,h}} \oplus \cS_{(gh)^{-1}}|_{M^{g,h}}.
\]
 Since $H:=\lan g,h\ran$ acts on each tangent space $T_xM$ and $T_xM/T_xM^{g,h}$ for $x \in M^{g,h}$, we have the decomposition 
$$N_{M^{g,h}\subset M} = \bigoplus_{\lambda \in \Hom(H,S^1)} W_{\lambda}.$$ Since the $\sfT$-action commutes with the $H$-action, this decomposition is $\sfT$-stable; that is, each $W_{\lambda}$ is a $\sfT$-equivariant complex vector bundle. Then we may show that 
\[
\calR_M(g,h)=\bigoplus_{a_{\lambda}(g)+a_{\lambda}(h)+a_{\lambda}((gh)^{-1})=2, \atop{\lambda\not=0}} W_{\lambda}.
\]
Thus $\calR_{M}(g,h)$ is actually represented by a $\sfT$-equivariant complex vector bundle. This is the version of the obstruction bundle introduced in \cite{BCS}. Since $\calR(g,h)$ is a $\sfT$-equivariant complex vector bundle on $M^{g,h}$, we take the $\sfT$-equivariant Euler class to define
$$c_{M}(g,h):=\be_{\sfT}\left(\calR_{M}(g,h)\right) \in H_{\sfT}(M^{g,h}),
$$ 
called the {\bf virtual class}. We define the {\bf $\sfR$-equivariant orbifold product} on $H_{orb,\sfR}([M/\sfS ])$ by the usual pull-cup-push formula. Namely for $\eta \in H_{\sfT}(M^g)$ and $\xi \in H_{\sfT}(M^h)$, 
\begin{eqnarray}
\eta \odot \xi := e_* \left(e_1^*\eta \cup e_2^*\xi \cup c_{M}(g,h)\right),
\end{eqnarray}
where $e_1$, $e_2$, and $e$ are the obvious inclusions of $M^{g,h}$ into $M^g$, $M^h$, and $M^{gh}$ respectively. For the definition of the equivariant pushforward $e_*$, see for example \cite{AB} Section 2.  The associativity of this product follows immediately from the proof of the corresponding associativity in non-equivariant case in \cite{BCS, GHK, JKK}. For $\eta \in H^{|\eta|}(M^g)$, the rational grading is assigned by $\deg_{\Q} \eta = |\eta| + 2\cdot \mbox{age}(g)$,  where $\mbox{age}(g):=\mbox{rank} \cS_g$. It follows immediately that the product is rationally graded.  If one of $g$, $h$, or $gh$ is the identity, then the obstruction bundle has rank $0$ and so it is easy to see that $H^*_{\sfT}(M)$ sits in $H^*_{orb,\sfR}([M/\sfS ])$ as a subalgebra. In particular, $H^*_{orb,\sfR}([M/\sfS ])$ is a $H^*_{\sfT}(pt)$-algebra.
\begin{theorem}\label{associativity}
$H^*_{orb,\sfR}([M/\sfS ])$ is a rationally graded, associative, $H^*_{\sfT}(pt)$-algebra.
\end{theorem}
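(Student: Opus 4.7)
The plan is to reduce the $\sfR$-equivariant associativity to the non-equivariant argument of Jarvis--Kaufmann--Kimura \cite{JKK} (see also \cite{BCS, GHK}), by checking that every ingredient in that proof lifts to the $\sfT$-equivariant setting verbatim. Concretely, fix $\eta \in H_{\sfT}(M^g)$, $\xi \in H_{\sfT}(M^h)$, $\zeta \in H_{\sfT}(M^k)$. I will compute both $(\eta \odot \xi) \odot \zeta$ and $\eta \odot (\xi \odot \zeta)$ as $\sfT$-equivariant pushforwards from the triple intersection $M^{g,h,k} := M^g \cap M^h \cap M^k$ to $M^{ghk}$, and show that the two resulting classes coincide.

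First, I would expand each iterated product using the projection formula and the excess intersection formula in $\sfT$-equivariant cohomology (both hold with the same proof as in the non-equivariant setting, since all inclusions $M^{g,h} \hookrightarrow M^g$, $M^{g,h,k} \hookrightarrow M^{gh,k}$, etc., are $\sfT$-equivariant closed embeddings of manifolds). After unwinding, both iterated products take the form
\[
\tilde e_{*}\bigl(\tilde e_1^{*}\eta \cup \tilde e_2^{*}\xi \cup \tilde e_3^{*}\zeta \cup \mathcal{C}_{\bullet}\bigr),
\]
where $\tilde e, \tilde e_i$ are the obvious $\sfT$-equivariant inclusions out of $M^{g,h,k}$ and $\mathcal{C}_{\bullet}$ is a product of (restrictions of) virtual classes $c_M(-,-)$ together with the excess Euler classes coming from the two relevant Cartesian squares. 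Specifically, $(\eta \odot \xi)\odot \zeta$ produces the factor $c_M(g,h)|_{M^{g,h,k}} \cup c_M(gh,k)|_{M^{g,h,k}} \cup \be_{\sfT}(E_1)$ and $\eta \odot(\xi \odot \zeta)$ produces $c_M(h,k)|_{M^{g,h,k}} \cup c_M(g,hk)|_{M^{g,h,k}} \cup \be_{\sfT}(E_2)$, where $E_1, E_2$ are the excess bundles.

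The main step, and the one on which everything else rests, is the \emph{$\sfT$-equivariant cocycle identity}
\[
\mathcal{R}_{M}(g,h)|_{M^{g,h,k}} \oplus \mathcal{R}_{M}(gh,k)|_{M^{g,h,k}} \oplus E_1 \;\cong\; \mathcal{R}_{M}(h,k)|_{M^{g,h,k}} \oplus \mathcal{R}_{M}(g,hk)|_{M^{g,h,k}} \oplus E_2
\]
as $\sfT$-equivariant complex vector bundles on $M^{g,h,k}$. Over $M^{g,h,k}$ every bundle in sight splits canonically into its isotypic components for the finite subgroup $H := \langle g,h,k\rangle \subset \sfS$, and since $\sfT$ commutes with $H$ each isotypic component is automatically $\sfT$-equivariant. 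The identity checked weight-by-weight in \cite{JKK} (and reproved combinatorially using ages in \cite{GHK}) is an identity of $H$-isotypic summands, so it upgrades directly to a $\sfT$-equivariant isomorphism. Taking $\sfT$-equivariant Euler classes and using multiplicativity then yields the equality $\mathcal{C}_{(\eta\odot\xi)\odot\zeta} = \mathcal{C}_{\eta\odot(\xi\odot\zeta)}$ in $H_{\sfT}(M^{g,h,k})$, which gives associativity.

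The remaining items are formal. The $H_{\sfT}(\mathrm{pt})$-algebra structure comes from the fact that $H_{\sfT}(M) \subset H_{orb,\sfR}([M/\sfS])$ as the $g=1$ summand (the obstruction bundle is trivial whenever one of $g,h,gh$ equals the identity, so the product restricted to the identity sector is the ordinary cup product). For the rational grading, I would verify on each sector that $\deg_{\Q}(\eta \odot \xi) = \deg_{\Q}(\eta) + \deg_{\Q}(\xi)$ by computing
\[
\operatorname{rank}_{\C}\mathcal{R}_{M}(g,h) \;=\; \operatorname{age}(g) + \operatorname{age}(h) - \operatorname{age}(gh) - \operatorname{codim}_{\C}\!\bigl(M^{g,h} \subset M^{gh}\bigr),
\]
which is the standard age computation and combines with the degree shift of the pushforward $e_{*}$ to cancel as required. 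The main obstacle is therefore just the cocycle identity above, and its $\sfT$-equivariant version is immediate because the decompositions used in \cite{JKK} are canonical and hence $\sfT$-invariant.
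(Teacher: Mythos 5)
Your proof is correct and follows essentially the same route as the paper, which simply observes that the non-equivariant associativity arguments of \cite{BCS, GHK, JKK} (excess intersection formula, projection formula, and the cocycle identity for the obstruction bundles) carry over verbatim because all the relevant decompositions and inclusions are $\sfT$-equivariant. Your write-up just makes explicit the details the paper leaves implicit; the grading and $H^*_{\sfT}(pt)$-algebra verifications also match the paper's.
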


The usual Chen-Ruan orbifold cohomology groups of an algebraic orbifold $\calX$ are defined 
as a vector space by $H^*(\calI\calX)$, where $\calI\calX$ is the inertia orbifold. The orbifold
product is the usual cup product which is then deformed by the Euler class of 
the obstruction bundle
for the corresponding Gromov-Witten theory \cite[Section 6]{AGV}. The obstruction bundle for
algebraic toric orbifolds has been computed by \cite{BCS} and adopted for symplectic
orbifolds by \cite{GHK} following the original definitions in \cite{CR1}. The most recent formula
for algebraic orbifolds that are global quotients by algebraic groups can be found in \cite{EJK}. 

If $\calX$ is an algebraic $\sfG$-orbifold, there is an induced action on the obstruction bundle defined in \cite{AGV}, and the $\sfG$-equivariant Chen-Ruan cohomology ring can be defined as the equivariant cohomology groups of the inertia orbifold together with the orbifold cup product deformed by the equivariant Euler class of the obstruction bundle \cite[Section 2.2.1]{J}. On the other hand, the obstruction bundle defined in \cite{CR1} for a symplectic $\sfG$-orbifold is also naturally $\sfG$-equivariant. In the case of the symplectic $\sfR$-orbifolds considered in this paper, the formula in \cite{GHK} derived from the definition in \cite{CR1} is $\sfR$-equivariantly valid. The argument in \cite[Appendix A]{GH} can be made equivariant.  It is also possible to verify that the computation done in \cite{BCS} is valid $\sfR$-equivariantly.

\section{Injectivity theorem for equivariant Chen-Ruan orbifold cohomology}\label{star product}
The ring $H^*_{orb,\sfR}([M/\sfS ])$ is not functorial: a map between spaces may not induce a map on Chen-Rual orbifold 
cohomology rings.  In particular, the inclusion of the fixed points $[F/\sfS]\into [M/\sfS]$ does not induce a map
in Chen-Ruan orbifold cohomology. Thus we introduce a new ring $\calNH^*_{\sfR}(\nu[F/\sfS ])$.  Recall from 
Section~\ref{se:fixed} that 
$$F:= \{ x\in M\ |\ \sfT\cdot x = \sfS\cdot x\}$$ 
is a submanifold of $M$, and the suborbifold
$[M/\sfS]^{\sfR}$ of $\sfR$-fixed orbifold points of $[M/\sfS]$ is exactly $[F/\sfS]$.  The ring $\calNH^*_{\sfR}(\nu[F/\sfS ])$
will be defined for the normal bundle of $[F/\sfS$ in $[M/\sfS]$.  This new ring  is defined only using the fixed points and 
isotropy data at the fixed points.
\begin{defn} As a vector space, we define $$\calNH_{\sfR}(\nu[F/\sfS ]):=\bigoplus_{g \in \sfS } H_{\sfT}(F^g).$$ The rational grading is defined with an age shift, exactly as in the previous section. We have the natural restriction map from $H_{orb,\sfR}([M/\sfS ])$ to $\calNH_{\sfR}(\nu[F/\sfS ])$ and in order to make it a ring homomorphism, the product must be defined appropriately in $\calNH_{\sfR}(\nu[F/\sfS ])$ using a push-cup-pull formula. Let $\calE(g,h)$ be the excess intersection bundle $\calE(g,h) = N_{M^{g,h}\subset M^{gh}}|_{F^{g,h}} \ominus N_{F^{g,h}\subset F^{gh}}$ for the diagram
\[
\begin{array}{c}
\xymatrix{
M^{g,h} \ar[r] & M^{gh} \ar@{}[ld]|{\square}\\
F^{g,h} \ar[r] \ar[u]& F^{gh}\ar[u]
}\end{array}.
\]
Define
\[
\calR'_F(g,h):=\calR_M(g,h)|_{F^{g,h}} \oplus \calE(g,h), \ \ \ c'_F(g,h):=\be_{\sfT}(\calR'_F(g,h)).
\]
The product $\star$ on $\calNH_{\sfR}(\nu[F/\sfS ])$ is defined for $a \in H_{\sfT}(F^g)$ and $b\in H_{\sfT}(F^h)$ by  $a\star b := f_{*}\left( f_1^*a \cup f_2^*b \cup c'_F(g,h)\right)$ where $f_1,f_2, f$ are the obvious inclusions of $F^{g,h}$ into $F^g, F^h$ and $F^{gh}$.
\end{defn}
\begin{theorem}\label{inertial associativity}
$(\calNH^*_{\sfR}(\nu[F/\sfS ]),\star)$ is an associative graded ring.
\end{theorem}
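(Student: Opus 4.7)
The plan is to mimic the proof of Theorem \ref{associativity}: use the projection formula and excess intersection to reduce associativity to a virtual bundle identity on the triple intersection, then exploit the analogous identity on $M$ that is implicit in the $\odot$-associativity already established. The excess term $\calE(g,h)$ built into $\calR'_F$ is designed precisely so that this reduction works out on the $F$-side.

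Concretely, for $a\in H_{\sfT}(F^{g_1})$, $b\in H_{\sfT}(F^{g_2})$, $c\in H_{\sfT}(F^{g_3})$, I would first unwind both $(a\star b)\star c$ and $a\star (b\star c)$ as pushforwards from the triple fixed set $F^{g_1,g_2,g_3}:=F^{g_1}\cap F^{g_2}\cap F^{g_3}$ to $F^{g_1g_2g_3}$. The second product in each bracketing entails pulling the first product back from $F^{g_1g_2}$ (resp.\ $F^{g_2g_3}$) through the fiber square
\[
\begin{array}{c}
\xymatrix{
F^{g_1,g_2,g_3} \ar[r] \ar[d] & F^{g_1g_2,g_3} \ar[d] \\
F^{g_1,g_2} \ar[r] & F^{g_1g_2}
}
\end{array}
\]
(and the analogous one on the right), which is Cartesian but not transverse. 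Excess intersection contributes the Euler class of $N_{F^{g_1,g_2}\subset F^{g_1g_2}}|_{F^{g_1,g_2,g_3}}\ominus N_{F^{g_1,g_2,g_3}\subset F^{g_1g_2,g_3}}$; combining this with the projection formula, $(a\star b)\star c$ equals the pushforward from $F^{g_1,g_2,g_3}$ to $F^{g_1g_2g_3}$ of $f_1^*a\cup f_2^*b\cup f_3^*c$ cupped with a class $\be_{\sfT}(\calV_{\mathrm{left}})$, and analogously $a\star (b\star c)$ gives $\be_{\sfT}(\calV_{\mathrm{right}})$.

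Associativity then reduces to the identity $\calV_{\mathrm{left}}=\calV_{\mathrm{right}}$ in $K_{\sfT}(F^{g_1,g_2,g_3})\otimes\Q$. Writing $\calR'_F=\calR_M|_F\oplus\calE$, each $\calV_\bullet$ splits into an $M$-part (built from $\calR_M(g_1,g_2)$, $\calR_M(g_1g_2,g_3)$, and the excess normal bundle of $M^{g_1,g_2}\subset M^{g_1g_2}$) and a purely normal-bundle part coming from the $\calE$-summands together with the discrepancy between the $F$-excess and the $M$-excess. The $M$-part identity is exactly the one that underlies associativity of $\odot$ on $H_{orb,\sfR}([M/\sfS])$ (Theorem \ref{associativity}), simply restricted from $M^{g_1,g_2,g_3}$ to $F^{g_1,g_2,g_3}$. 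For the normal-bundle part, the short exact sequences
\[
0\to N_{F^{A}\subset F^{B}}\to N_{F^{A}\subset M^{B}}\to N_{M^{B}\subset M^{A'}}|_{F^{A}}\to 0
\]
for nested index sets $A\subset B$ (and various choices of ambient $A'$) yield $K$-theoretic identities that make the remaining excess contributions cancel between the two bracketings.

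The hard part is this last bookkeeping: verifying that $\calE(g,h)=N_{M^{g,h}\subset M^{gh}}|_{F^{g,h}}\ominus N_{F^{g,h}\subset F^{gh}}$ is precisely the correction that allows the $F$-side excess intersection discrepancies to cancel the difference between $M$-obstructions and $F$-obstructions. Granting this, the rational grading of $\star$ is automatic from $\rk\calR'_F(g,h)=\rk\calR_M(g,h)+\rk\calE(g,h)$ and the standard age-shift convention used for $\odot$.
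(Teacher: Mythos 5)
Your setup is exactly the paper's: unwind both bracketings via the excess intersection formula for the Cartesian square relating $F^{g,h}$, $F^{gh}$, $F^{g,h,m}$, $F^{gh,m}$, apply the projection formula, and reduce associativity to showing that the total virtual class accumulated on $F^{g,h,m}$ is symmetric in the three group elements. The gap is that you stop precisely at the step that constitutes the proof. Your phrase ``Granting this, \dots'' defers the verification that $\calV_{\mathrm{left}}=\calV_{\mathrm{right}}$, and your proposed route to it --- splitting into an ``$M$-part'' handled by invoking Theorem \ref{associativity} plus a residual normal-bundle bookkeeping --- is not actually available in the form you need: the paper proves $\odot$-associativity by citation to the non-equivariant literature, so there is no explicit K-theoretic identity on $M^{g_1,g_2,g_3}$ in the paper that you can restrict to $F^{g_1,g_2,g_3}$, and the excess bundle for the $F$-side square is genuinely different from the $M$-side one, so the two ``parts'' do not decouple without doing the full computation anyway.

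What the paper does instead is carry out that computation directly. Writing $\calR'_F(g,h)=\calR_M(g,h)|_{F^{g,h}}\oplus\calE(g,h)$ and adding the excess class $E=N_{F^{g,h}\subset F^{gh}}|_{F^{g,h,m}}\ominus N_{F^{g,h,m}\subset F^{gh,m}}$ from the Cartesian square, the terms $N_{F^{g,h}\subset F^{gh}}$ cancel, and the crucial cancellation
\[
\calS_{gh}\oplus\calS_{(gh)^{-1}}=TM\ominus TM^{gh}=N_{M^{gh}\subset M}
\]
eliminates the asymmetric normal bundle $N_{M^{gh}\subset M}$, leaving
\[
\ominus N_{F^{g,h,m}\subset F^{ghm}}\ominus N_{M^{ghm}\subset M}\oplus\calS_g\oplus\calS_h\oplus\calS_m\oplus\calS_{(ghm)^{-1}},
\]
which is manifestly symmetric in $(g,h,m)$. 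This identity $\calS_g\oplus\calS_{g^{-1}}=TM\ominus TM^g$ is the one missing ingredient in your outline; without it (or an equivalent), the ``bookkeeping'' you acknowledge as the hard part does not close, and the theorem is not proved.
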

\proof Let $g,h,m \in \sfS $. Denote all relevant inclusions by 
\[
\xymatrix{
F^g  &  F^{g,h}  \ar[r]_f \ar[l]_{f_1} \ar[ld]_{f_2} \ar@{}[rd]|{\square} &  F^{gh}&\\
F^h  & F^{g,h,m} \ar[u]_{\phi} \ar[r]^{\psi}          & F^{gh,m} \ar[lld]^{f_3} \ar[u]_{f_4} \ar[r]_l & F^{ghm}\\
F^m & &&
}
\]
and
\[
\xymatrix{
F^g  &   &&\\
F^h  & F^{g,h,m} \ar[lu]_{\overline f_1}\ar[l]_{\overline f_2} \ar[r]^{\psi}    \ar[ld]_{\overline f_3}      & F^{gh,m} \ar[lld]^{f_3}  \ar[r]_l & F^{ghm}\\
F^m & &&.
}
\]
Let us calculate
\begin{eqnarray*}
(a\star b) \star c &=& l_*\left(f_4^*f_{*}\left( f_1^*a \cup f_2^*b \cup c'_F(g,h)\right) \cup f_3^*c \cup c'_F(gh,m)\right)\\
&&\ \ \ \ \ \ \ \mbox{ by definition; } \\
&=& l_*\left(\psi_*\left(\phi^{*}\left( f_1^*a \cup f_2^*b \cup c'_F(g,h)\right)\cup \epsilon\right) \cup f_3^*c \cup c'_F(gh,m)\right)\\
&& \ \ \ \ \ \ \ \mbox{ by the excess intersection formula; }\\
&=&l_*\left(\psi_*\left(\phi^{*} f_1^*a \cup \phi^{*}f_2^*b \cup \phi^{*}c'_F(g,h) \cup  \epsilon\right) \cup f_3^*c \cup c'_F(gh,m)\right)\\
&&\ \ \ \ \ \ \  \mbox{ because pull-back commutes with cup product; }\\
&=&l_*\left(\psi_*\left(\overline f_1^*a \cup \overline f_2^*b \cup \phi^{*}c'_F(g,h) \cup  \epsilon\right) \cup f_3^*c \cup c'_F(gh,m)\right)\\
&&\ \ \ \ \ \ \  \mbox{ because $\overline f_1 = f_1\circ \phi$ and $\overline f_2 = f_2\circ \phi$; } \\
&=&l_*\psi_*\left(\overline f_1^*a \cup \overline f_2^*b \cup \phi^{*}c'_F(g,h) \cup  \epsilon \cup \psi^*f_3^*c \cup \psi^*c'_F(gh,m)\right)\\
&& \ \ \ \ \ \ \  \mbox{ by the projection formula; }\\
&=&l_*\psi_*\left(\overline f_1^*a \cup \overline f_2^*b \cup \overline f_3^*c \cup \phi^{*}c'_F(g,h) \cup  \epsilon  \cup \psi^*c'_F(gh,m)\right).
\end{eqnarray*}
In the last line, we denote $\epsilon:=\be_{\sfT}(E)$ where the bundle 
$$E:=N_{F^{g,h}\subset F^{gh}}|_{F^{g,h,m}}  \ominus N_{F^{g,h,m}\subset F^{gh,m}} $$ is the  excess intersection bundle corresponding to the square in above diagram. Now $\phi^{*}c'_F(g,h) \cup  \epsilon  \cup \psi^*c'_F(gh,m)$ is the $\sfT$-equivariant Euler class of 
\begin{eqnarray*}
&& \phi^{*}\left(\calR_M(g,h)|_{F^{g,h}} \oplus \calE(g,h)\right) \oplus E  \oplus \psi^*\left(\calR_M(gh,m)|_{F^{gh,m}} \oplus \calE(gh,m)\right) \\
&=&  \ominus N_{F^{g,h,m}\subset F^{ghm}}  \ominus N_{M^{ghm} \subset M}  \oplus\calS_g \oplus \calS_h \oplus \calS_m \oplus\calS_{(ghm)^{-1}} ,
\end{eqnarray*}
where we omit $|_{F^{g,h,m}}$ everywhere, and the only non-obvious cancellation  uses 
$$\calS_g \oplus \calS_{g^{-1}} = TM \ominus TM^g.$$ 
The final form is symmetric in $(g,h,m)$, establishing the associativity of $\star$. \qed

\vskip 0.1in
The following is an immediate generalization of the product to an $n$-fold product.
\begin{cor}
For $a_i \in H_{\sfT}(F^{g_i}), \ i=1,\cdots,n$, we may define an $n$-fold product by
\[
a_1\star a_2 \star \cdots \star a_n = \bbf_*\left(\overline f_1^*a_1 \cup \overline f_2^* a_2 \cup \cdots \cup \overline f_n^* a_n \cup \be_{\sfT}\left(\calR'_F(g_1,\cdots,g_n)\right)\right)
\]
where $\overline f_i:  F^{g_1,\cdots,g_n} \to F^{g_i}$ and $\bbf: F^{g_1,\cdots,g_n} \to F^{\prod g_i}$ are obvious inclusions and the obstruction bundle is given by $$\calR'_F(g_1,\cdots,g_n):=\ominus N_{F^{g_1,\cdots,g_n} \subset F^{\prod g_i}} \ominus N_{M^{\prod g_i}\subset M}\oplus \bigoplus_{i=1}^n \calS_{g_i} \oplus \cS_{\left(\prod g_i\right)^{-1}}.$$
\end{cor}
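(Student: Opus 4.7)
The plan is to proceed by induction on $n$, with the base case $n=2$ being exactly the definition of $\star$ (one checks that $\calR'_F(g_1,g_2)$ as defined in the corollary agrees with $\calR_M(g_1,g_2)|_{F^{g_1,g_2}} \oplus \calE(g_1,g_2)$ after using the short exact sequence $N_{M^{g,h}\subset M} = N_{M^{g,h}\subset M^{gh}} \oplus N_{M^{gh}\subset M}|_{M^{g,h}}$ to split the normal bundle). Well-definedness of the iterated product is guaranteed by Theorem \ref{inertial associativity}, so all that remains is to verify the explicit formula.

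For the inductive step I would write $a_1\star\cdots\star a_n = (a_1\star\cdots\star a_{n-1})\star a_n$, apply the inductive hypothesis to the inner product (which lives on $F^{g_1\cdots g_{n-1}}$ and carries the Euler class of $\calR'_F(g_1,\ldots,g_{n-1})$), and then unfold the outer $\star$ via the two-fold definition. Following the same template as in the proof of Theorem \ref{inertial associativity}, one pulls the inner push-forward $\bbf'_*$ through the restriction to $F^{g_1\cdots g_{n-1}, g_n}$ using the excess intersection formula, which introduces the excess bundle
\[
E \ := \ N_{F^{g_1,\ldots,g_{n-1}}\subset F^{g_1\cdots g_{n-1}}}|_{F^{g_1,\ldots,g_n}} \ominus N_{F^{g_1,\ldots,g_n}\subset F^{g_1\cdots g_{n-1},g_n}}.
\]
The projection formula then consolidates every pullback and Euler class into a single push-forward from $F^{g_1,\ldots,g_n}$ to $F^{\prod g_i}$, so the only remaining task is to identify the accumulated virtual bundle with $\calR'_F(g_1,\ldots,g_n)$.

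The main obstacle, and the step I expect to be the heart of the argument, is the bookkeeping required to show that
\[
\phi^*\calR'_F(g_1,\ldots,g_{n-1}) \ \oplus\ E \ \oplus\ \psi^*\calR'_F\!\left(\textstyle\prod_{i<n} g_i,\,g_n\right) \ = \ \calR'_F(g_1,\ldots,g_n)
\]
in $K_\sfT(F^{g_1,\ldots,g_n})\otimes\Q$. The telescoping of normal bundles
\[
N_{F^{g_1,\ldots,g_n}\subset F^{\prod g_i}} \ = \ N_{F^{g_1,\ldots,g_n}\subset F^{g_1\cdots g_{n-1},g_n}} \oplus N_{F^{g_1\cdots g_{n-1},g_n}\subset F^{\prod g_i}}|_{F^{g_1,\ldots,g_n}}
\]
(and the analogous identity inside $M$) will absorb the $E$ term together with the normal-bundle pieces of the two $\calR'_F$ summands, while the age pieces $\bigoplus_{i<n}\calS_{g_i}$ carried by $\calR'_F(g_1,\ldots,g_{n-1})$ and $\calS_{g_n}$ carried by $\calR'_F(\prod_{i<n} g_i, g_n)$ will combine directly into $\bigoplus_{i=1}^n \calS_{g_i}$. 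The delicate cancellation is between the leftover $\calS_{(\prod_{i<n} g_i)^{-1}}$ from the inductive hypothesis and the $\calS_{\prod_{i<n} g_i}$ implicitly present in $\calR_M(\prod_{i<n} g_i, g_n)$, which is precisely the identity
\[
\calS_g \oplus \calS_{g^{-1}} \ = \ TM|_{M^g} \ominus TM^g \ = \ N_{M^g\subset M}
\]
used in Theorem \ref{inertial associativity} with $g = \prod_{i<n} g_i$. Once this cancellation is carried out, the remaining expression is manifestly symmetric in $g_1,\ldots,g_n$ and matches $\calR'_F(g_1,\ldots,g_n)$, completing the induction.
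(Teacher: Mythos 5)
Your proposal is correct and follows essentially the same route as the paper: the paper offers no separate proof, declaring the corollary an immediate generalization of the computation in Theorem \ref{inertial associativity}, and your inductive step is exactly that computation (excess intersection formula, projection formula, and the cancellation $\calS_g\oplus\calS_{g^{-1}}=TM\ominus TM^g$) carried out one more time. Your verification of the $n=2$ base case via the splitting $N_{M^{g,h}\subset M}=N_{M^{g,h}\subset M^{gh}}\oplus N_{M^{gh}\subset M}|_{M^{g,h}}$ is also the right consistency check.
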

The inclusions $i:F^g \to M^g$ for all $g \in \sfS $ altogether induce a rationally graded linear map $\mathcal{I}:H_{orb,\sfR}([M/\sfS ]) \to \calNH_{\sfR}(\nu[F/\sfS ])$.  Consider the diagram of obvious inclusions:
\[ 
\xymatrix{
M^g\times M^h 					& M^{g,h} \ar[r]^e \ar[l]_{\ \ \ \ \ \ \Delta_M} \ar@{}[rd]|{\square}& M^{gh}  \\
F^g\times F^h \ar[u]^{(i_1,i_2)}	& F^{g,h} \ar[r]^f \ar[l]_{\ \ \ \ \ \ \Delta_F}  \ar[u]_{j} & F^{gh} \ar[u]_i
}
\] 
The map $\mathcal{I}$ is a graded ring homomorphism if  
\begin{eqnarray*}
i^*(e_*(\Delta_M^*(\eta\otimes\xi)\cup c_{\sfT}(g,h))) & = & f_*(\Delta_F^*(i_1,i_2)^*(\eta\otimes\xi)\cup c'_{\sfT}(g,h))\\
&  = &  f_*(j^*\Delta_M^*(\eta\otimes\xi)\cup c'_{\sfT}(g,h)).
\end{eqnarray*}
Since $c'_{\sfT}(g,h)=j^*(c_{\sfT}(g,h)) \cup \be_{\sfT}(\calE(g,h))$, the equality follows exactly from the excess intersection formula. Thus, combined with the injectivity theorem, we obtain
\begin{theorem}\label{map of rings}
The natural map $\mathcal{I}:(H_{orb,\sfR}([M/\sfS ]),\odot) \to (\calNH_{\sfR}(\nu[F/\sfS ]),\star)$ is a graded ring homomorphism. If the $\sfT$-action on $M$ satisfies the condision $(\Q 2)$ (resp.\ $(\Z 2)$), then this homomorphism is injective over $\Q$ (resp.\ over $\Z$).
\end{theorem}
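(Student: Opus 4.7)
The plan is to split the theorem into its two assertions and attack each separately, since the ring homomorphism claim is a direct application of the excess intersection formula while injectivity reduces to a componentwise use of Theorem~\ref{thm:injectivity}.

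For the homomorphism claim, given classes $\eta \in H_\sfT(M^g)$ and $\xi \in H_\sfT(M^h)$, I would chase the definition of $\odot$ around the commutative diagram of inclusions
\[
\xymatrix{
M^g\times M^h & M^{g,h} \ar[r]^e \ar[l]_{\Delta_M} \ar@{}[rd]|{\square} & M^{gh} \\
F^g\times F^h \ar[u]^{(i_1,i_2)} & F^{g,h} \ar[r]^f \ar[l]_{\Delta_F} \ar[u]_{j} & F^{gh} \ar[u]_{i}
}.
\]
Naturality gives $\Delta_F^*(i_1,i_2)^* = j^*\Delta_M^*$; the excess intersection formula applied to the right cartesian square yields $i^*\circ e_* = f_*(j^*(-)\cup \be_{\sfT}(\calE(g,h)))$; and the definition $\calR'_F(g,h) = \calR_M(g,h)|_{F^{g,h}} \oplus \calE(g,h)$ forces the key identity $c'_F(g,h) = j^*c_M(g,h) \cup \be_{\sfT}(\calE(g,h))$. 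Combining these three facts with the projection formula produces $i^*(\eta \odot \xi) = \mathcal{I}(\eta)\star\mathcal{I}(\xi)$, exactly as indicated in the paragraph preceding the theorem. Preservation of the age-shifted grading is automatic because $\age(g)$ is determined by the $\lan g \ran$-weights on $N_{M^g \subset M}$, which restrict identically along $F^g \hookrightarrow M^g$.

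For injectivity I would observe that $\mathcal{I}$ splits as $\bigoplus_{g \in \sfS} i_g^*$, where $i_g : F^g \hookrightarrow M^g$, and attempt to apply Theorem~\ref{thm:injectivity} summand-by-summand to the Hamiltonian $\sfR$-orbifold $[M^g/\sfS]$. The $\sfR$-fixed suborbifold of $[M^g/\sfS]$ equals $[F^g/\sfS]$: a point $x \in M^g$ is $\sfR$-fixed on the orbifold level precisely when $\sfT\cdot x = \sfS\cdot x$, which is the defining condition for $F$, so $F^g = F \cap M^g$ carves out the fixed locus. Consequently Theorem~\ref{thm:injectivity} yields injectivity of $i_g^*$ provided hypothesis $(\Q 2)$ (resp.\ $(\Z 2)$) is inherited by the $\sfT$-action on each $M^g$.

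The one nontrivial point, and where I expect the main work to lie, is this hypothesis inheritance. For a generic rational $\xi \in \ft$, each connected critical component $F'_c$ of $\mu^{\xi}|_{M^g}$ sits inside a connected critical component $F_c$ of $\mu^{\xi}$ on $M$, and the negative normal bundle of $F'_c$ in $M^g$ is the $\lan g\ran$-fixed subbundle of the negative normal bundle of $F_c$ in $M$. A subtorus $\sfT_1 \subset \sfT$ fixing $F_c$ pointwise and acting non-trivially (resp.\ with primitive weights) on the negative normal bundle in $M$ still fixes $F'_c$ pointwise, and its weights on the smaller bundle form a subset of its weights on the larger. When this subset is non-empty the hypothesis descends at once; when it is empty the negative normal bundle has rank zero, the Euler class is trivially $1$, and Proposition~\ref{s.e.s} degenerates to an isomorphism that poses no obstruction to the induction proving Theorem~\ref{thm:injectivity}. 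The integral case will demand the most care to write out cleanly, since primitivity of weights is a delicate property under passage to subrepresentations.
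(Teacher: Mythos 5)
Your proposal matches the paper's proof: the ring-homomorphism claim is established exactly as you describe, via the identity $c'_F(g,h)=j^*c_M(g,h)\cup\be_{\sfT}(\calE(g,h))$ combined with the excess intersection and projection formulas applied to the cartesian square, and injectivity is obtained by applying Theorem~\ref{thm:injectivity} sector-by-sector to each $[M^g/\sfS]$. Your discussion of why the hypotheses $(\Q 2)$ and $(\Z 2)$ are inherited by the $\sfT$-action on each $M^g$ (the negative normal bundle of $F^g_c$ in $M^g$ being a sub-sum of weight spaces of that of $F_c$ in $M$) is in fact more explicit than the paper, which simply says ``combined with the injectivity theorem'' without comment.
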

\section{Examples: compact symplectic toric orbifolds}
\subsection{Pullback and pushforward maps for inclusions of polytopes}
In this section, we collect the notions of pullback and pushforward maps of Stanley-Reisner rings. Let $\Delta$ be a simple polytope with $m$ facets $H_1,\cdots, H_m$. For $\tau \in K_\Delta$, let $G=\cap_{i \in \tau} H_i$ be an $(n-r)$-dimensional face of $\Delta$. Then $G$ is also a simple polytope and the corresponding simplicial complex $K_G$ is isomorphic to the link $K_{\Delta,\tau}$ of $\tau$ in $K_{\Delta}$, namely,  $K_{\Delta,\tau}:=\{\sigma \subset [m]\backslash \tau \ |\ \sigma \sqcup \tau \in K_{\Delta}\}$.
Let $K_{\Delta,\tau}*\tau$ be the joint of $K_{\Delta,\tau}$ with the simplex $\tau$, namely $K_{\Delta,\tau}*\tau:=\{\sigma_1\sqcup \sigma_2 \ \ \sigma_1 \in K_{\Delta,\tau} \mbox{ and } \sigma_2 \subset \tau\}$. Then
\[
\widetilde{\SR}(G):=\SR(K_{\Delta,\tau}*\tau) \cong \frac{\Z[x_1,\cdots,x_m]}{\lan x^{\sigma} \ |\ \sigma \subset [m]\backslash \tau, \ \sigma \cup \tau \in K_{\Delta}\ran}\cong \SR(G)\otimes \Z[x_i, i\in\tau],
\]
where $x^{\sigma}:=\prod_{i\in\sigma}x_i$. If $G'=\cap_{i\in \tau'} H_i$ be a non-empty face contained in $G$, i.e. $\tau \subset \tau'$, then naturally $K_{\Delta,\tau'}*\tau'$ is a subcomplex of $K_{\Delta,\tau}*\tau$. Thus there are natural pullback and pushforward maps on the Stanley-Reisner rings:
\[
(i_{G',G})^*: \widetilde{\SR}(G) \surj \widetilde{\SR}(G'), \ \ x_i \mapsto x_i, \ \ \ \ \ \ 
(i_{G',G})_*:\widetilde{\SR}(G') \to \widetilde{\SR}(G),\ \ 1\mapsto x^{\tau'\backslash \tau}.
\]
The pushforward is determined by the image of $1$ since the pullback map is a surjective ring homomorphism and the pushforward is a homomorphism as $\widetilde{\SR}(G)$-module where the module structure on $\widetilde{\SR}(G')$ is induced by the pullback map.

\subsection{Equivariant Chen-Ruan orbifold cohomology}
We use the notation from Section \ref{comb}. Let $\mu: M \to \frakr^*$ be the moment map for the toric orbifold $[M/\sfS ]$ so that $\mu(M)=\Delta$. Recall from \cite{LT} that $^t\!B\cdot (\ ) + \eta$ embeds $\Delta$ into $\ft^*$, and $M$ is defined as the preimage of $\Delta':=\, ^t\!B(\Delta)+\eta$ under the standard moment map $\overline \mu:\C^m \to \ft^*$. The moment map $\mu$ is the composition of $\overline\mu$ with the inverse of $^t\!B\cdot (\ ) + \eta$ restricted to $\Delta':=\ ^t\!B\cdot (\Delta) + \eta$. The faces of $\Delta'$ are given by the intersections of $\Delta'$ and the coordinate planes in $\ft^*\cong \R^n$.  
\begin{lemma}
Let $G:=H_{j_1}\cap \cdots \cap H_{j_r}$ be an $(n-r)$-dimensional face. 
The global stabilizer $\sfT_{\mu^{-1}(G)}$ of $\mu^{-1}(G)$ in $\sfT$ is 
\[
\sfT_{\mu^{-1}(G)}=\{(t_1,\cdots, t_m) \in \sfT \ |\ t_i=1,\forall i \in [m]\backslash\{j_1,\cdots j_r\}\} = \bigcap_{v: \ vertex\ of\ G } \sfT_{F_v}.
\]
Furthermore $H_{\sfT}(\mu^{-1}(G)) \cong \widetilde{\SR}(G)$.
\end{lemma}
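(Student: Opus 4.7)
The plan is to compute the global stabilizer directly from the description of $M$ as a symplectic quotient of $\C^m$, and then to realize $\mu^{-1}(G)$ itself as a smaller reduction in order to apply Theorem~\ref{stanley reisner for orbifold} together with a K\"unneth decomposition.

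For the first statement, since $\mu = \bigl({}^t\!B(\cdot)+\eta\bigr)^{-1}\circ \overline\mu$ on $\Delta'$, a point $z \in M$ lies in $\mu^{-1}(G)$ if and only if $\bigl({}^t\!B\cdot\mu(z)+\eta\bigr)_{j_i} = 0$ for $i=1,\ldots,r$, which by the formula $\overline\mu(z)_k = |z_k|^2$ reads $z_{j_1}=\cdots=z_{j_r}=0$. Thus $\mu^{-1}(G) = \{z \in M : z_{j_i}=0,\ i=1,\ldots,r\}$. Since $\sfT$ acts coordinate-wise, the stabilizer of a point $z \in \mu^{-1}(G)$ in $\sfT$ consists of $(t_1,\ldots,t_m)$ with $t_k=1$ whenever $z_k \neq 0$; because at a generic point of $\mu^{-1}(G)$ every coordinate outside $\{j_1,\ldots,j_r\}$ is nonzero, the global stabilizer is precisely $\sfT_{\mu^{-1}(G)} = \{t \in \sfT : t_k=1,\ \forall k \notin \{j_1,\ldots,j_r\}\}$. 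Combined with the formula $\sfT_{F_v} = \{t : t_k = 1,\ \forall k \notin H_v^*\}$ from the previous lemma, the second equality reduces to the elementary combinatorial identity $\bigcap_{v \in G}\! H_v^* = \{j_1,\ldots,j_r\}$ for a simple polytope: the inclusion $\supseteq$ is immediate, and if $k$ lies in every $H_v^*$ then $H_k$ contains every vertex of $G$, hence contains $G$, forcing $k \in \{j_1,\ldots,j_r\}$.

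For the cohomology statement, the key observation is that $\sfT_{\mu^{-1}(G)}$ acts trivially on $\mu^{-1}(G)$. Choosing a splitting $\sfT \cong \sfT_{\mu^{-1}(G)} \times \sfT'$, the K\"unneth formula gives
\[
H_{\sfT}^*(\mu^{-1}(G)) \ \cong\ H^*(B\sfT_{\mu^{-1}(G)}) \otimes H_{\sfT'}^*(\mu^{-1}(G)) \ \cong\ \Z[x_{j_1},\ldots,x_{j_r}] \otimes H_{\sfT'}^*(\mu^{-1}(G)).
\]
For the second tensor factor, I would identify $\mu^{-1}(G)$ with the symplectic reduction of $\C^{m-r} := \{z \in \C^m : z_{j_i}=0\}$ by the induced $\sfS$-action at level $\eta'$: the matrix $A$ restricted to the rows outside $\{j_1,\ldots,j_r\}$ yields a labeled polytope which is exactly $G$, so $[\mu^{-1}(G)/\sfS]$ is a compact symplectic toric orbifold with moment polytope $G$, and its residual torus $\sfT'/\overline\sfS$ has dimension $n-r$. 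Applying Theorem~\ref{stanley reisner for orbifold} to this smaller reduction gives $H_{\sfT'}^*(\mu^{-1}(G)) \cong \SR(G)$. Plugging back,
\[
H_{\sfT}^*(\mu^{-1}(G)) \ \cong\ \Z[x_{j_1},\ldots,x_{j_r}] \otimes \SR(G) \ =\ \widetilde{\SR}(G),
\]
using the description of $\widetilde{\SR}(G)$ from the previous subsection.

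The main technical obstacle is the second step: justifying that $[\mu^{-1}(G)/\sfS]$ is presented as a toric reduction of the form required by Theorem~\ref{stanley reisner for orbifold}, i.e.\ that the image $\overline\sfS$ of $\sfS$ in $\sfT/\sfT_{\mu^{-1}(G)}$ is genuinely a subtorus of the coordinate-wise $(S^1)^{m-r}$-action on $\C^{m-r}$, with $\sfS \cap \sfT_{\mu^{-1}(G)}$ finite so that no dimension is lost. This reduces to the fact that $\sfS$ acts locally freely on $M \supset \mu^{-1}(G)$, so any element of $\sfS$ fixing $\mu^{-1}(G)$ pointwise must be finite order. The remaining identification of generators across the tensor decomposition is straightforward bookkeeping once the splitting $\sfT \cong \sfT_{\mu^{-1}(G)} \times \sfT'$ is fixed, since the $r$ polynomial generators of $H^*(B\sfT_{\mu^{-1}(G)})$ correspond to the facets $H_{j_1},\ldots,H_{j_r}$ adjoined to $\SR(G)$ in forming $\widetilde{\SR}(G)$.
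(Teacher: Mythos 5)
Your treatment of the global stabilizer is essentially the paper's own: the authors likewise observe that $\mu^{-1}(G)$ is cut out of $M$ by $z_{j_1}=\cdots=z_{j_r}=0$ and read the stabilizer off the coordinate-wise action; your reduction of the second equality to the combinatorial identity $\bigcap_{v\in G}H_v^*=\{j_1,\dots,j_r\}$ is a correct, slightly more explicit version of what they leave implicit. For the cohomology claim you take a genuinely different route. The paper disposes of it with ``the second claim follows from the similar calculation as in Section 6,'' meaning: compute $H^*_{\sfT}(\mu^{-1}(G))$ directly as a quotient of $H^*_{\sfT}(\C^m)=\Z[x_1,\dots,x_m]$ via the surjection coming from the pair $(\C^m,\mu^{-1}(G))$, the short exact sequences $0\to H_{(S^1)^k}(\C^k,S^{2k-1})\to H_{(S^1)^k}(\C^k)\to H_{(S^1)^k}(S^{2k-1})\to 0$, and the K\"unneth formula (the moment-angle-complex argument of \cite{H}, \cite{BP}); this yields $\widetilde{\SR}(G)$ at once as the quotient of $\Z[x_1,\dots,x_m]$ by the Stanley--Reisner ideal of $K_{\Delta,\tau}*\tau$, with the relations $x_k=0$ for $k\notin\tau$, $H_k\cap G=\emptyset$, appearing automatically. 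Your route --- split off $H^*(B\sfT_{\mu^{-1}(G)})$ by K\"unneth and recognize $[\mu^{-1}(G)/\sfS]$ as a smaller toric reduction with polytope $G$, then quote Theorem~\ref{stanley reisner for orbifold} --- has the virtue of explaining the factorization $\widetilde{\SR}(G)\cong\Z[x_i,\,i\in\tau]\otimes\SR(G)$ geometrically, and the K\"unneth step is sound over $\Z$ since $H^*(B\sfT_{\mu^{-1}(G)})$ is free.

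There is, however, a genuine gap in the second step, and it is not the one you flag. The obstacle is not local freeness (which, as you say, is inherited from $M$) but \emph{redundancy}: the induced presentation of $G$ as a reduction of $\C^{m-r}$ uses one coordinate for each $k\in[m]\setminus\tau$, and whenever $H_k\cap G=\emptyset$ that coordinate does not correspond to a facet of $G$. This is the generic situation --- already for $G$ a vertex of the triangle in the paper's $\bbP^2_{(1,1,2)}$ example, the coordinate $z_3$ is redundant. Theorem~\ref{stanley reisner for orbifold} and its proof live in the Lerman--Tolman framework of Section 6.2, where every coordinate of the ambient $\C^m$ is assumed to correspond to a facet of the moment polytope; the GKM argument there is phrased entirely in terms of the facet sets $H_v^*$ and does not literally cover presentations with redundant half-spaces. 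The conclusion you need is still true --- each redundant generator $x_k$ must map to zero, matching the relation $x_k=0$ in $\widetilde{\SR}(G)$ --- but this requires either extending Theorem~\ref{stanley reisner for orbifold} to such presentations or a separate argument (e.g.\ that the corresponding circle factor of $\mu^{-1}(G)$ is rotated without fixed points, killing $x_k$ in equivariant cohomology). As written, the identification $H^*_{\sfT'}(\mu^{-1}(G))\cong\SR(G)$ and the matching of generators across the tensor decomposition do not close up without this.
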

\proof 
By definition, we have $\mu^{-1}(G)=\{z \in \C^m \ |\ |z_i|^2=(^t\!B\cdot v +\eta)_i, 1\leq i \leq m, \ v \in G\}.$ Thus $z \in \mu^{-1}(G^{\circ})$ if and only if $z_i=0$ for $i \in \{j_1,\cdots, j_r\}$,  where $G^{\circ}$ is the relative interior of $G$. The second claim follows from the similar calculation as in Section \ref{comb}. \qed
\

Let $g:=(g_1,\cdots,g_m) \in \sfS  \subset \sfT$. By the lemma above, $M^g=\mu^{-1}(G_g)$ where $G_g$ is the union of faces $G$ such that $\sfT_{\mu^{-1}(G)}$ contains $g$. Let $\overline a_g:=\{ i \ |\ g_i=1\}, \overline b_g:=\{i \ |\ g_i\not=1\} \subset \{1,\cdots,m\}$. Then $g \in \sfT_{\mu^{-1}(G)}$ if and only if $\overline b_g \subset \{j_1,\cdots,j_r\}$. Therefore we have
\begin{lemma}
$G_g= \bigcap_{i \in \overline b_g} H_i$. In particular, $H_{\sfT}^*(M^g)= \widetilde{\SR}(G_g)$.
\end{lemma}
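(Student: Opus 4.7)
The plan is to combine the preceding isotropy calculation with the set-theoretic definition of $G_g$, and then simply invoke the previous lemma for a single face.

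First, I would unpack the definition. By hypothesis, $G_g$ is the union of all faces $G = H_{j_1}\cap\cdots\cap H_{j_r}$ of $\Delta$ with $g \in \sfT_{\mu^{-1}(G)}$. The preceding lemma characterizes this condition combinatorially: $g \in \sfT_{\mu^{-1}(G)}$ if and only if $g_i = 1$ for every $i \notin \{j_1,\dots,j_r\}$, which is exactly $\overline b_g \subset \{j_1,\dots,j_r\}$. Thus $G_g$ is the union of those faces of $\Delta$ whose defining facet-index set contains $\overline b_g$.

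Next, I would observe that for a simple polytope, each face $G$ equals the union of its sub-faces, and the faces with defining index set containing $\overline b_g$ are precisely the sub-faces of $F := \bigcap_{i\in\overline b_g} H_i$. Indeed, $H_{j_1}\cap\cdots\cap H_{j_r}\subseteq F$ whenever $\overline b_g \subset \{j_1,\dots,j_r\}$, and conversely every sub-face of $F$ has this form because $\Delta$ is simple. Hence
\[
G_g \;=\; \bigcup_{G'\subset F} G' \;=\; F \;=\; \bigcap_{i\in\overline b_g} H_i,
\]
which proves the first assertion. The possibly degenerate case $F = \varnothing$ (when $\overline b_g \notin K_\Delta$) corresponds to $M^g = \varnothing$, and the formula holds trivially in that case.

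For the second assertion, I would simply apply the previous lemma to the face $G_g$: since $G_g = \bigcap_{i\in\overline b_g} H_i$ is itself a face of $\Delta$ (of dimension $n-|\overline b_g|$ when nonempty), and since $M^g = \mu^{-1}(G_g)$ by construction, the previous lemma gives $H_\sfT^*(M^g) = H_\sfT^*(\mu^{-1}(G_g)) \cong \widetilde{\SR}(G_g)$. No further computation is required, as all real content lies in the preceding lemma. The only potential obstacle is keeping the indexing conventions straight between $\overline b_g$ (the indices where $g$ acts nontrivially) and the facet-index set $\{j_1,\dots,j_r\}$ of $G_g$, but once these are aligned the result is a direct translation.
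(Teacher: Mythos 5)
Your proposal is correct and follows exactly the route the paper takes: the paper states this lemma as an immediate consequence ("Therefore we have...") of the preceding observation that $g\in\sfT_{\mu^{-1}(G)}$ iff $\overline b_g\subset\{j_1,\dots,j_r\}$, combined with $M^g=\mu^{-1}(G_g)$ and the cohomology computation of the previous lemma. Your write-up simply makes explicit the small step (a face's sub-faces are exactly the faces whose facet-index sets contain its own, so the union collapses to $\bigcap_{i\in\overline b_g}H_i$) that the paper leaves implicit.
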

Now the intersection of $M^g$ and $M^h$ is given by $M^g \cap M^h =\overline\mu^{-1}(G_g \cap G_h)$ where $G_g \cap G_h = \bigcap_{ i \in \overline b_g \cup \overline b_h} H_i$. Thus the normal bundle of $M^{g,h}$ in $M$ is given by $\bigoplus_{ i \in \bar b_g \cup \bar b_h} \C\cdot \frac{\partial}{\partial z_i}$ Let $\lambda_i \in \Hom(\sfT,S^1)$ such that $\lambda_i(t)=t_i$ and define $0\leq \tilde{\lambda}_i(g) < 1$ by  $\lambda_i(g)=e^{2\pi i \cdot \tilde{\lambda}_i(g)}$. Thus the obstruction bundle and the virtual class are 
\[
\calR(g,h) = \bigoplus_{\tilde{\lambda}_i(g)+\tilde{\lambda}_i(h)+\tilde{\lambda}_i((gh)^{-1})=2, \atop{ i \in  \overline b_g \cup  \overline b_h }}  \C\cdot\frac{\partial}{\partial z_i}, \ \  \mbox{ and } \ \ 
c_{\sfT}(g,h)= \prod_{\tilde{\lambda}_i(g)+\tilde{\lambda}_i(h)+\tilde{\lambda}_i((gh)^{-1})=2, \atop{ i \in  \overline b_g \cup  \overline b_h }} x_i.
\]
The normal bundle of $M^{g,h}$ in $M^{gh}$ is given by $\bigoplus_{ i \in (\bar b_g \cup \bar b_h)\backslash \bar b_{gh}} \C\cdot \frac{\partial}{\partial z_i}$ and its Euler class is $\prod_{i \in (\bar b_g \cup \bar b_h)\backslash \bar b_{gh}} x_i$. The equivariant Chen-Ruan cohomology space is $H_{\sfR, orb}([M/\sfS ])= \bigoplus_{g \in \sfS } \widetilde{\SR}(G_g)$ where $\widetilde{\SR}(G_g):=0$ if $G_g=\phi$. Since the pullback and pushforward maps of the equivariant cohomology agree with the ones on the Stanley-Reisner rings, we find that the product is given by
\[
1_g\odot 1_h =\underbrace{ \left(\prod_{\tilde{\lambda}_i(g)+\tilde{\lambda}_i(h)+\tilde{\lambda}_i((gh)^{-1})=2, \atop{  i \in  \overline b_g \cup  \overline b_h  }} x_i \right)}_{virtual\ class}\cdot \underbrace{\left( \prod_{i\in (\overline b_g \cup \overline b_h) \backslash \overline b_{gh} }x_i\right)}_{Euler\ class\ of\atop{ nomal\ bundle}} \cdot 1_{gh},
\]
where $1_g, 1_h$ and $1_{gh}$ are the identities in the corresponding Stanley-Reisner ring. The product $\cdot$ on the right-hand side can be defined as the product in $\Z[x_1,\cdots, x_m]$, and each sector is generated by the identity element as a $\Z[x_1,\cdots, x_m]$-algebra. Thus the above formula is enough to compute the general product.
\subsection{Demonstration of computations} In this section, we present two computations, namely the weighted projective spaces in dimension 2, with weights $(1,1,2)$ and $(1,2,4)$.
\subsubsection{The weighted projective space $\bbP^2_{(1,1,2)}$}
Consider the following polytope with facets $H_i$, facet labels all (implicitly) $1$, and the corresponding primitive inward-pointing normal vectors $\rho_i$ to the facets.{\tiny
\[
\xymatrix{ 
\ar@{}[r]^{(0,2)}&\bullet\ar@{-}[dd]_{H_3}\ar@{-}[rdd]^{H_2}&&\\
&-&&\\
\ar@{}[r]_{(0,0)}&\bullet\ar@{-}[r]_{H_1}&\bullet \ar@{}[r]_{(0,1)}&
}
\ \ \ \ \ \ 
\xymatrix{
\circ&\circ&\bullet&\circ\\
\circ&\circ&\bullet_O\ar[u]_{\rho_1=^t(0,1) }\ar[r]_{\rho_3=^t(1,0) }\ar[dll]_{\rho_2=^t(-2,-1)}&\bullet\\
\bullet&\circ&\circ&\circ
}
\]
}

The polytope is given by $$\Delta=\{v \in \R^2 \ |\ \lan \rho_i, v \ran \geq -\eta_i\ ,\ i=1,2,3 \}$$ where $(\eta_1,\eta_2,\eta_3)=(0,2,0)$. The corresponding matrix $B$ is {\small $\left(\begin{array}{ccc} 0&-2&1 \\ 1&-1&0 \end{array}\right)$} and $A$ is {\small $\left(\begin{array}{c} 1\\ 1\\2 \end{array}\right)$}. Thus $M$ is given by $|z_1|^2 + |z_2|^2 + 2|z_3|^2=2$ in $\C^3$ and $\sfS = \{(t,t,t^2) \ |\ t \in \U(1)\} \subset \sfT = \U(1)^3$. The only elements $g$ of $\sfS$ such that $G_g$ is not empty are
\[
(1,1,1) \ \ \ \ \ (-1,-1,1)
\]
and the corresponding $\bar b_g, G_g, K_{G_g},  K_{G_g}*\bar b_g, \tilde{\lambda}_i$ and ages are given in the following table.
\begin{equation}
\begin{array}{c||c|c|c|c|c|c|c|c|}
g& \bar b_g & G_g & K_{G_g} & K_{G_g}*\bar b_g & \tilde{\lambda}_1 & \tilde{\lambda}_2 & \tilde{\lambda}_3 & 2\mbox{age}\\ \hline \hline
\mathbf{1}:=(1,1,1) & \phi & \Delta & K_{\Delta} & K_{\Delta} &0&0&0& 0 \\ \hline
\sigma:=(-1,-1,1) & \{1,2\} & \bullet  & \phi  & \bullet\!\!-\!\!\bullet &1/2&1/2&0& 2 \\ \hline
\end{array}
\end{equation}
Thus 
\[
H_{CR,\sfR}([M/\sfS])  = \underbrace{\frac{\Z[x_1,x_2,x_3]}{\lan x_1x_2x_3 \ran}}_{\mathbf{1}} \oplus \underbrace{\frac{\Z[x_1,x_2,x_3]}{\lan x_3\ran}}_{\sigma}.
\]
The following is the table of the multiplications between $1_{\mathbf{1}}$ and $1_{\sigma}$.
\begin{equation}
\begin{array}{c||c|c|}
 & 1_{\mathbf{1}} & 1_{\sigma}\\ \hline\hline
1_{\mathbf{1}} & 1_{\mathbf{1}}& 1_{\sigma}\\ \hline
1_{\sigma} & 1_{\sigma}& (1)\cdot(x_1x_2)\cdot 1_{\mathbf{1}}\\ \hline
\end{array}
\end{equation}
\subsubsection{The weighted projective space $\bbP^2_{(1,2,4)}$} 
Consider the following polytope with facets $(H_1,H_2,H_3)$ and with the labels $(1,1,2)$ respectively, and the corresponding primitive inward-pointing normal vectors $\rho_i$ of facets.
{\tiny
\[
\xymatrix{ 
\ar@{}[r]^{(0,2)}&\bullet\ar@{-}[dd]_{H_3}^{\ 1}\ar@{-}[rdd]^{H_2}_1&&\\
&\circ&&\\
\ar@{}[r]_{(0,0)}&\bullet\ar@{-}[r]_{H_1}^2&\bullet \ar@{}[r]_{(0,1)}&
}
\ \ \ \ \ \ \ \ 
\xymatrix{
\circ&\circ&\bullet&\circ\\
\circ&\circ&\bullet_O\ar[u]_{\rho_1=^t(0,1) }\ar[r]_{\rho_3=^t(1,0)}\ar[dll]_{\rho_2=^t(-2,-1)}&\bullet\\
\bullet&\circ&\circ&\circ
}
\]
}
The corresponding matrix $B$ is ${\small \left(\begin{array}{ccc} 0&-2&1 \\ 2&-1&0 \end{array}\right)}$ and $A$ is ${\small \left(\begin{array}{c} 1\\ 2\\4 \end{array}\right)}$. The hyperplanes defining $\Delta$ is still given by $(\eta_1,\eta_2,\eta)=(0,2,0)$. Thus $\sfS=\{(t,t^2,t^4)\}$ and $M$ is given by the equation $|z_1|^2 + 2|z_2|^2 + 4|z_3|^2=4$. The corresponding $\bar b_g, G_g, K_{G_g},  K_{G_g}*\bar b_g, \tilde{\lambda}_i$ and ages are computed in the following table.
\begin{equation}
\begin{array}{c||c|c|c|c|c|c|c|c|}
g& \bar b_g & G_g & K_{G_g} & K_{G_g}*\bar b_g & \tilde{\lambda}_1 & \tilde{\lambda}_2 & \tilde{\lambda}_3&2\mbox{age}\\ \hline \hline
\mathbf{1}=(1,1,1) & \phi & \Delta & K_{\Delta} & K_{\Delta}										&0&0&0& 0\\ \hline
\xi=(\sqrt{1},-1,1) & \{1,2\} & \bullet  & \phi  & \bullet\!\!-\!\!\bullet 							&1/4&1/2&0&3/2\\ \hline
\xi^2=(-1,1,1) &         \{1\}   & \bullet\!\!-\!\!\bullet & \bullet\  \bullet & \bullet\!\!-\!\!\bullet\!\!-\!\!\bullet	&1/2&0&0&1\\ \hline
\xi^3=(-\sqrt{1},-1,1)&\{1,2\}& \bullet  & \phi  & \bullet\!\!-\!\!\bullet 							&3/4&1/2&0&5/2\\ \hline
\end{array}
\end{equation}
Thus 
\[
H_{CR,\sfR}([M/\sfS]) = \underbrace{\frac{\Z[x_1,x_2,x_3]}{\lan x_1x_2x_3\ran }}_{\mathbf{1}} \oplus \underbrace{\frac{\Z[x_1,x_2,x_3]}{\lan x_3\ran }}_{\xi}  \oplus \underbrace{\frac{\Z[x_1,x_2,x_3]}{\lan x_2x_3\ran}}_{\xi^2} \oplus \underbrace{\frac{\Z[x_1,x_2,x_3]}{\lan x_3\ran }}_{\xi^3}.
\]
The following is the table of the multiplications of $1_{\mathbf{1}}, 1_{\xi}, 1_{\xi^2},1_{\xi^3}$:
\[
\begin{array}{c||c|c|c|c|}
g\backslash h & 1_{\mathbf{1}} &1_{\xi}&1_{\xi^2} & 1_{\xi^3}   \\ \hline\hline
1_{\mathbf{1}} 		&1_{\mathbf{1}}&1_{\xi} &1_{\xi^2} &1_{\xi^3} \\ \hline
1_{\xi}	&1_{\xi^2} & (1)\cdot (x_2)\cdot 1_{\xi^2} & 1_{\xi^3}  &(1)\cdot(x_1x_2)\cdot 1_{\mathbf{1}}  \\ \hline
1_{\xi^2}&1_{\xi^2} & 1_{\xi^3}& (1)\cdot(x_1)\cdot 1_{\mathbf{1}} &(x_1)\cdot(1)\cdot 1_{\xi}   \\ \hline
1_{\xi^3} 	&1_{\xi^3} & (1)\cdot(x_1x_2)\cdot 1_{\mathbf{1}}&(x_1)\cdot(1) \cdot 1_{\xi}  &(x_1)\cdot (x_2)\cdot 1_{\xi^2}  \\ \hline
\end{array}
\]
\subsection{Presentations of $H_{\sfR, orb}([M/\sfS ])$ as a subring}

From our main result, $H^*_{\sfR, orb}([M/\sfS ])$ is a subring of
\[
\calNH^*_{\sfR}(\nu[F/\sfS ])= \bigoplus_{g \in \sfS }  \bigoplus_{v \in G_g} \Z[x_i,  i \in \bv] =  \bigoplus_{g \in \sfS } \left\{\left(p_v\right)_{\overline b_g \subset \bv}, p_v \in \Z[x_i,  i \in \bv] \right\}
\]
where $\bv$ corresponds to $v$ by $v=\bigcap_{i\in\bv} H_i$. Note that $v\in G_g \Leftrightarrow \overline b_g \subset \bv$. The product $\left(p_v\right)_{\overline b_g \subset \bv}\star \left(p_w\right)_{\overline b_h \subset \bw} \in \bigoplus_{\overline b_{gh} \subset \bu} \Z[x_i,  i \in \bu]$ can be computed  by its $u$-component 
\begin{eqnarray*}
&&\left.\left(p_v\right)_{\overline b_g \subset \bv}\star \left(p_w\right)_{\overline b_h \subset \bw}\right|_u \\
&=& p_u\cdot q_u\cdot \left(\prod_{\tilde{\lambda}_i(g)+\tilde{\lambda}_i(h)+\tilde{\lambda}_i((gh)^{-1})=2, \atop{  i \in  \overline b_g \cup  \overline b_h  }} x_i\right)\left(\prod_{i \in  (\overline b_g \cup \overline b_h) \backslash  \overline b_{gh}} x_i\right) 
\end{eqnarray*}
if $\overline b_g \cup \overline b_h \subset \bu$ and otherwise is zero.

\end{document}